\documentclass[10pt,reqno]{amsart}
\usepackage{amssymb}
\usepackage{amsmath}
\usepackage{amsfonts}
\usepackage{hyperref}
\usepackage[title]{appendix}
\usepackage[all,poly]{xy}
\usepackage{enumerate}
\usepackage[square,sort,comma,numbers]{natbib}
\usepackage{extpfeil}
\usepackage{color}
\usepackage{lipsum}
\usepackage{tabularx}
\usepackage{bm}
\usepackage{enumitem}
\usepackage{CJKutf8}



\allowdisplaybreaks

\begin{document}
\begin{CJK}{UTF8}{gbsn}
    \theoremstyle{plain}
    \newtheorem{thm}{Theorem}[section]
    \newtheorem{theorem}[thm]{Theorem}
    \newtheorem{lemma}[thm]{Lemma}
    \newtheorem{corollary}[thm]{Corollary}
    \newtheorem{corollary*}[thm]{Corollary*}
    \newtheorem{proposition}[thm]{Proposition}
    \newtheorem{proposition*}[thm]{Proposition*}
    \newtheorem{conjecture}[thm]{Conjecture}
    \theoremstyle{definition}
    \newtheorem{construction}[thm]{Construction}
    \newtheorem{notations}[thm]{Notations}
    \newtheorem{question}[thm]{Question}
    \newtheorem{problem}[thm]{Problem}
    \newtheorem{remark}[thm]{Remark}
    \newtheorem{remarks}[thm]{Remarks}
    \newtheorem{definition}[thm]{Definition}
    \newtheorem{claim}[thm]{Claim}
    \newtheorem{assumption}[thm]{Assumption}
    \newtheorem{assumptions}[thm]{Assumptions}
    \newtheorem{properties}[thm]{Properties}
    \newtheorem{example}[thm]{Example}
    \newtheorem{comments}[thm]{Comments}
    \newtheorem{blank}[thm]{}
    \newtheorem{observation}[thm]{Observation}
    \newtheorem{defn-thm}[thm]{Definition-Theorem}
		\newcommand{\Rmnum}[1]{\uppercase\expandafter{\romannumeral #1}}  
		\newcommand{\rmnum}[1]{\romannumeral #1}

\def\vol{\operatorname{vol}}


    \title{Large genus asymptotics of super Weil-Petersson volumes}
    \author[X. Huang]{Xuanyu Huang}
    \address{Center of Mathematical Sciences, Zhejiang University, Hangzhou, Zhejiang 310027, China}
    \email{Hxuanyu98@gmail.com}
    
    \maketitle
\begin{abstract}
In this paper, we obtain the asymptotic expansions of super intersection numbers and prove that the associated coefficients are polynomials. Moreover, we give an algorithm which can explicitly compute these coefficients. As an application, we prove the existence of a complete asymptotic expansion of super Weil-Petersson volumes in the large genus. This generalizes the celebrated work of Mirzakhani-Zograf. We also confirm two conjectural formulae proposed by Griguolo-Papalini-Russo-Seminara.
\end{abstract}

\section{Introduction}
\subsection{Super Weil-Petersson volumes} JT supergravity is a generalization of JT gravity with supersymmetry in the boundary. Their partition functions can be expressed in terms of Weil-Petersson volumes of moduli space of super Riemann surface and moduli space of curves, respectively \cite{kimura2020jt,stanford2020jt}. Denote $\hat{V}_{g,n}(L_1,...,L_n)$ as the super Weil-Petersson volume with boundary components of lengths $L_1,...,L_n$. Stanford-Witten \cite{stanford2020jt} proved that such volume is the integral of the Euler form of a holomorphic vector bundle $E_{g,n}^{\vee}$ combined with the Weil-Petersson symplectic form over the moduli space of spin hyperbolic surfaces which is denoted by $M_{g,n,\vec{o}}^{spin}(L_1,...,L_n)$ with boundary geodesic of lengths $L_1,...,L_n$ and $\vec{o}=(0,0,...,0)$ representing the Neveu-Schwarz boundary. That is, $$\hat{V}_{g,n}(L_1,...,L_n)=\int_{\mathcal{M}_{g,n,\vec{o}}^{spin}(L_1,...,L_n)}e(E_{g,n}^{\vee})\mathrm{exp}( \omega^{WP}).$$ 
Stanford-Witten also derived a recursion formula of $\hat{V}_{g,n}(L_1,...,L_n)$, which can be regarded as the super version of Mirzakhani's recursion formula \cite{mirzakhani2007simple}. Stanford-Witten's formula can be equivalently expressed in terms of super intersection numbers $\mathbf{(\Rmnum{3})}$ (cf. Section 2.1) through a new cohomology class $\Theta$ constructed by Norbury \cite{norbury2023new,norbury2020enumerative} who proved that $e(E_{g,n}^{\vee})$ coincides with $\Theta$ class up to a factor $2^{1-g-n}$ after push-forward,
\begin{equation}\label{VTheta}\begin{aligned}\hat{V}_{g,n}(L_1,...,L_n)&=2^{1-g-n}\int_{\overline{\mathcal{M}}_{g,n}}\Theta_{g,n}\mathrm{exp}(2\pi^2\kappa_1+\frac{1}{2}\sum_{i=1}^n L_i^2\psi_i)\\&=:2^{1-g-n} V^{\Theta}_{g,n}(L_1,...,L_n).\end{aligned}\end{equation}

Now we explain the notations in \eqref{VTheta}. $\overline{\mathcal{M}}_{g,n}$ represents the moduli space of stable $n$-pointed genus $g$ complex curves. The morphism that forgets the last marked point is $$\pi: \overline{\mathcal{M}}_{g,n+1}\to\overline{\mathcal{M}}_{g,n},$$ and the gluing maps which glue the last two points are $$\phi_{irr}: \overline{\mathcal{M}}_{g-1,n+2}\to\overline{\mathcal{M}}_{g,n},$$ $$\phi_{h,I}:\overline{\mathcal{M}}_{h,|I|+1}\times\overline{\mathcal{M}}_{g-h,|J|+1}\to\overline{\mathcal{M}}_{g,n},\qquad I\sqcup J=\{1,...,n\}.$$
The canonical sections of $\pi$ is denoted by $\sigma_1,...,\sigma_n$ and the corresponding divisors
in $\overline{\mathcal{M}}_{g,n+1}$ is denoted by $D_1,...,D_n$. Let $\omega_{\pi}$ be the relative dualizing sheaf, the tautological classes on $\overline{\mathcal{M}}_{g,n}$ are
$$\psi_i=c_1(\sigma_i^{\ast}(\omega_{\pi})),$$ $$\kappa_i=\pi_{\ast}(c_1(\omega_{\pi}(\sum D_i))^{i+1}).$$
Define \begin{equation}\label{WP}
[\prod\limits_{i=1}\limits^{n}\tau_{d_i}]_g=\frac{(2\pi^2)^{d_0}}{d_0!}\prod\limits_{i=1}\limits^{n}2^{2d_i}(2d_i+1)!!\int_{\overline{\mathcal{M}}_{g,n}}\kappa_1^{d_0}\prod\limits_{i=1}\limits^{n}\psi_i^{d_i},
\end{equation}
where $\kappa_1=\omega/(2\pi^2)$ is the first Mumford class on $\overline{\mathcal{M}}_{g,n}$ \cite{arbarello1996combinatorial}. According to \cite{mirzakhani2013growth}, the Weil-Petersson volumes $V_{g,n}$ of $\overline{\mathcal{M}}_{g,n}$ equals $[\tau_0^n]_g$.

The class $\Theta_{g,n}$ in \eqref{VTheta} belongs to $\mathit{H}^{4g-4+2n}(\overline{\mathcal{M}}_{g,n},\mathbb{Q})$ for $g\geq 1$, $n\geq 0$, $2g-2+n>0$ and \\
(\rmnum{1}) $\phi_{irr}^{\ast}\Theta_{g,n}=\Theta_{g-1,n+2}$ and $ \phi_{h,\mathbf{I}}^{\ast}\Theta_{g,n}=\pi_1^{\ast}\Theta_{h,|I|+1}\cdot\pi_2^{\ast}\Theta_{g-h,|J|+1}$,\\
(\rmnum{2})$\Theta_{g,n+1}=\psi_{n+1}\cdot\pi^{\ast}\Theta_{g,n}$,\\ (\rmnum{3}) $\Theta_{1,1}=3\psi_1$, \\where $\pi_i$
is projection onto the $i$-th factor of $\overline{\mathcal{M}}_{h,|I|+1}\times\overline{\mathcal{M}}_{g-h,|J|+1}$. Here we require $g\geq 1$ since $\Theta_{0,n}=0$ which is obtained from that $\mathrm{deg}\ \Theta_{0,n}=n-2>n-3=\mathrm{dim}(\overline{\mathcal{M}}_{g,n})$

For $\mathbf{d}=(d_1,...,d_n)$ with $d_i\in\mathbb{Z}_{\geq 0}$ and $|\mathbf{d}|=d_1+\cdots+d_n\leq g-1,$ let $d_0=g-1-|\mathbf{d}|$ and define 
\begin{equation}\label{superint}
[\prod\limits_{i=1}\limits^{n}\tau_{d_i}]^{\Theta}_g:=\frac{\prod_{i=1}^n(2d_i+1)!!2^{2|\mathbf{d}|}(2\pi^2)^{d_0}}{d_0!}\int_{\overline{\mathcal{M}}_{g,n}}\Theta_{g,n}\psi_1^{d_1}\cdots\psi_n^{d_n}\kappa_1^{d_0}.
\end{equation}
We call it $super$ $intersection$ $number$, which differs from \eqref{WP} in a $\Theta$ class. Therefore, \eqref{VTheta} can be represented by super intersection numbers as
\begin{equation}\label{VTheta2}V_{g,n}^\Theta(2L_1,...,2L_n)=\sum\limits_{|\mathbf{d}|\leq g-1}[\tau_{d_1}\cdots\tau_{d_n}]^\Theta_g\frac{L_1^{2d_1}}{(2d_1+1)!}\cdots\frac{L_n^{2d_n}}{(2d_n+1)!}.\end{equation}
In particular, $V_{g,n}^\Theta=V_{g,n}^\Theta(0,...,0)=[\tau_0^n]^\Theta_g$.

\subsection{Large genus asymptotics for intersection numbers} In a remarkable work  \cite{mirzakhani2015towards}, Mirzakhani-Zograf obtained the large asymptotic expansions of \eqref{WP} and $V_{g,n}$. They devised an algorithm to calculate the asymptotic coefficients. A slightly more detailed exposition of their algorithm and some improvements of the polynomial properties of these coefficients can be found in \cite{Huang1}. The work of Mirzakhani-Zograf \cite{mirzakhani2015towards} opened a new research field on random hyperbolic geometry of large genera, see \cite{mirzakhani2013growth,wu2022random,wu2022small,he2022second,nie2023large,parlier2022simple} for some recent progress.

Masur-Veech volume of moduli space of quadratic differentials in the principle case can be expressed in terms of intersection numbers on moduli space of curves \cite{delecroix2021masur,chen2023masur}. Inspired by this connection, the large genus asymptotics of $\psi$-intersection numbers were derived by Aggarwal \cite{aggarwal2021large} and Guo-Yang \cite{guo2024large} via different methods. Moreover, Guo-Yang \cite{guo2024large} gave very precise description of the asymptotic coefficients as polynomials. The special cases of this polynomial properties were studied in \cite{liu2016recursions,liu2014remark}. The integrand of the Masur-Veech volume and $\Theta$ class both are special $\Omega$-classes \cite{giacchetto2023intersection,chen2023masur,norbury2023new}. We hope that their asymptotics may share some similarities.

In \cite{guo2024combinatorics}, Guo-Norbury-Yang-Zagier studied large genus asymptotics of intersection numbers involving $\Theta$, $\psi$ classes and the polynomial properties of the coefficients in the expansions. In \cite{Huang2}, we start from a closed formula of $2$-point correlators and obtain the large genus asymptotics for the $n$-point correlators. The normalizations in \cite{guo2024combinatorics,Huang2} are different.

\subsection{Main results}
At present, large genus asymptotics for super Weil-Petersson volumes are only known for one boundary case which was proposed in \cite{stanford2020jt} and refined in \cite{okuyama2020jt}.
The conjectural formulae (cf. \cite[(2.13) and (2.11a)]{griguolo2024asymptotics}) of large genus asymptotics of super Weil-Petersson volumes with given boundary components satisfying $L_i\ll g$ was recently proposed by Griguolo-Papalini-Russo-Seminara \cite{griguolo2024asymptotics} by applying resurgence theory to analyze the matrix model of JT supergravity, which we will refer to as the $GPRS$ $conjecture$. We reformulated it following the notation \eqref{VTheta} as below.
\begin{conjecture}[GPRS \cite{griguolo2024asymptotics}]\label{Mainconj}
(1). As $g\to\infty$,
\begin{equation}\label{Vconj}
V^{\Theta}_{g,n}(L_1,...,L_n)\sim\frac{\Gamma(2g+n-\frac{5}{2})}{2^{n-\frac{1}{2}}\pi^{\frac{9}{2}-n-2g}}\prod\limits_{i=1}\limits^{n}\frac{\mathrm{sinh}(L_i/4)}{L_i/4}.
\end{equation}

(2). For any given $\mathbf{d}$, as $g\to\infty$, we have
\begin{equation}\label{Coeffconj}
[\prod\limits_{i=1}\limits^{n}\tau_{d_i}]^{\Theta}_g\sim\frac{2^{-n-2|\mathbf{d}|}\pi^{2g+n-\frac{9}{2}}}{\sqrt{g-|\mathbf{d}|-1}}(2g-3+n)!
\end{equation} 
\end{conjecture}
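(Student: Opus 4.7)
The strategy is to derive both parts of Conjecture \ref{Mainconj} from a complete large-genus asymptotic expansion of the super intersection numbers $[\prod_{i=1}^n\tau_{d_i}]^{\Theta}_g$: part (2) is the leading term of such an expansion, and part (1) follows by summation through \eqref{VTheta2}.

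To establish the expansion, I would adapt the Mirzakhani--Zograf framework to the $\Theta$-twisted setting. Starting from Stanford--Witten's super topological recursion, which via Norbury's identification \eqref{VTheta} becomes a Mirzakhani-type recursion for $V^{\Theta}_{g,n}(L_1,\dots,L_n)$, an expansion in the $L_i$ using \eqref{VTheta2} produces a recursion among the $[\prod_{i=1}^n\tau_{d_i}]^{\Theta}_g$. The base case $n=1$ has already been analyzed by Stanford--Witten and refined by Okuyama \cite{okuyama2020jt}, and the seed identity $\Theta_{1,1}=3\psi_1$ together with properties (\rmnum{1})--(\rmnum{2}) of $\Theta_{g,n}$ under gluing and forgetting supplies the inductive machinery. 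The target output is an expansion $[\prod\tau_{d_i}]^{\Theta}_g = A(g,\mathbf{d})\bigl(1+\sum_{k\geq 1}c_k(\mathbf{d})g^{-k}\bigr)$ with coefficients $c_k(\mathbf{d})$ polynomial in $\mathbf{d}$, echoing Guo--Yang \cite{guo2024large} in the $\psi$-class setting and Guo--Norbury--Yang--Zagier \cite{guo2024combinatorics} in the $\Theta$-$\psi$ setting. Reading off the leading prefactor $A(g,\mathbf{d})$ from this analysis should reproduce the right side of \eqref{Coeffconj}, proving part (2).

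For part (1), I substitute $L_i\mapsto L_i/2$ in \eqref{VTheta2} and insert \eqref{Coeffconj}. The factor $2^{-2|\mathbf{d}|}$ from the change of variable combines with the $2^{-2|\mathbf{d}|}$ in \eqref{Coeffconj} to yield $\prod_i 2^{-4d_i}L_i^{2d_i}=\prod_i(L_i/4)^{2d_i}$, so the $\mathbf{d}$-sum factorizes formally as $\prod_i\sinh(L_i/4)/(L_i/4)$. Combined with the Stirling relation $\Gamma(2g+n-\tfrac{5}{2})\sim(2g+n-3)!/\sqrt{2g+n}$, the prefactors collapse to the right side of \eqref{Vconj}.

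The main obstacle is uniform control over the $\mathbf{d}$-sum. The pointwise asymptotic \eqref{Coeffconj} degenerates through the denominator $\sqrt{g-|\mathbf{d}|-1}$ as $|\mathbf{d}|\to g-1$, so naive term-by-term passage to the limit is not justified. I would supplement the leading asymptotic with an effective upper bound $[\prod\tau_{d_i}]^{\Theta}_g\leq C\,A(g,\mathbf{d})$ valid uniformly in $\mathbf{d}$, obtainable by induction from the recursion in the spirit of \cite{mirzakhani2015towards}. Together with the factorial decay of $L_i^{2d_i}/(2d_i+1)!$, such a bound confines the dominant contribution to bounded $|\mathbf{d}|$ and legitimizes the interchange of sum and limit, so that the pointwise asymptotic \eqref{Coeffconj} can indeed be summed to yield \eqref{Vconj}.
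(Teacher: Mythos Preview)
Your strategy closely mirrors what the paper actually carries out: adapt the Mirzakhani--Zograf machinery to the super setting via the Stanford--Witten/Norbury recursion, establish the asymptotic expansion of $[\prod\tau_{d_i}]^{\Theta}_g$ with polynomial coefficients, and then pass to $V^{\Theta}_{g,n}(\mathbf{L})$ by a uniform two-sided bound on the $\mathbf{d}$-sum. So the overall plan is sound and essentially the paper's.

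However, there is a genuine gap precisely where you write ``reading off the leading prefactor $A(g,\mathbf{d})$ from this analysis should reproduce the right side of \eqref{Coeffconj}.'' It does not. The Mirzakhani--Zograf scheme works entirely with \emph{ratios} such as $V^{\Theta}_{g+1,n}/V^{\Theta}_{g,n}$, $V^{\Theta}_{g,n+1}/V^{\Theta}_{g,n}$, and $[\prod\tau_{d_i}]^{\Theta}_g/V^{\Theta}_{g,n}$; it yields the full expansion of these ratios in inverse powers of $g$, but the overall normalization appears only as an infinite product $C=\lim_{g\to\infty} V^{\Theta}_{g,0}\sqrt{g}/\bigl((2g-3)!\pi^{2g}\bigr)$ that the recursion cannot evaluate in closed form. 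The paper therefore proves \eqref{Vconj} and \eqref{Coeffconj} only \emph{up to an undetermined universal constant} $C$ (Theorem \ref{GPRSConj}) and records $C=\pi^{-9/2}$ as a separate open conjecture. Your proposal implicitly assumes that the recursion pins down $C$, which it does not; nothing in the inductive analysis, the gluing/forgetting axioms for $\Theta$, or the $n=1$ Okuyama refinement forces this specific value. To close the gap you would need an independent computation---for instance, an exact evaluation of some $V^{\Theta}_{g,0}$ for infinitely many $g$, or an analytic argument from the matrix-model side---that the paper does not supply.
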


Our proof of GPRS conjecture follows the strategy of Mirzakhani-Zograf \cite{mirzakhani2015towards}. We generalize their algorithm to the super case (cf. Section 3.1) which can explicitly compute the coefficients in the asymptotic expansions of super intersection numbers \eqref{superint} via the recursion formulae $(\mathbf{\Rmnum{1}})$-$(\mathbf{\Rmnum{3}})$ (cf. Section 2.1) obtained in the author, Liu and Xu's previous paper \cite{HLX}. We prove that these super coefficients are also polynomials. Another key ingredient is an extension of a two-sided estimate of Wu-Xue \cite[Lemma 22]{nie2023large} to the super case. As a result, we prove GPRS conjecture up to a universal constant (cf. Theorem \ref{GPRSConj}).

Compared to the original Mirzakhani's recursion formula \cite[P276 $\mathbf{\Rmnum{3}}$]{mirzakhani2013growth}, the coefficients $a_L$ in the super version of Mirzakhani's recursion formula $(\mathbf{\Rmnum{3}})$ have no closed formula, which made the calculation in the super case more complicated. We will give detailed proofs to take care of this. Besides, some facts frequently used in the following proofs are stated in the Appendix A.

\noindent\textbf{Notations.} In this paper, $f_1(g)\asymp f_2(g)$ means that there exists a universal constant $C<\infty$ such that $$\frac{f_2(g)}{C}\leq f_1(g)\leq Cf_2(g).$$
$f_{1}(g)=\mathit{O}\left(f_2(g)\right)$ means there exists a universal constant $C>0$ such that $$f_1(g)\leq Cf_{2}(g),$$
and $f_{1}(g)=\mathit{o}\left(f_2(g)\right)$ means that $$\lim\limits_{g\to\infty}\frac{f_1(g)}{f_2(g)}=0.$$

Now we discuss the main results obtained in this paper:

\begin{theorem}\label{Polythm}We have the following asymptotic expansions as $g\to\infty$:
\begin{itemize}[leftmargin=2em]
\item [(1).] Given $n\geq 1$, $s\geq 0$, and $\mathbf{d}=(d_1,...,d_n)$, there exist $e^1_{n,\mathbf{d}},...,e^s_{n,\mathbf{d}}$ independent of $g$ such that
\begin{equation}\label{Tauasymp}
\frac{4^{|\mathbf{d}|}[\tau_{d_1}\cdots\tau_{d_n}]_g^\Theta}{V_{g,n}^\Theta}=1+\frac{e^1_{n,\mathbf{d}}}{g}+\cdots+\frac{e^s_{n,\mathbf{d}}}{g^s}+\mathit{O}\left(\frac{1}{g^{s+1}}\right),
\end{equation}
where $|\mathbf{d}|=d_1+\cdots+d_n$. Moreover, for any fixed $n$ and $\mathbf{d}$, the coefficient $e_{n,\mathbf{d}}^i$ is a polynomial in $\mathbb{Q}[\pi^{-1}]$ of degree at most $2i$.
\item [(2).] Given $n,s\geq 0$, there exist $a^i_{n},b^i_{n}$, i=1,...,s, independent of $g$ such that
\begin{equation}\label{Nasymp}
\frac{\frac{\pi}{2}(2g-2+n)V^\Theta_{g,n}}{V^\Theta_{g,n+1}}=1+\frac{a^1_{n}}{g}+\cdots+\frac{a^s_{n}}{g^s}+\mathit{O}\left(\frac{1}{g^{s+1}}\right),
\end{equation}
\begin{equation}\label{Gasymp}
\frac{4V^\Theta_{g-1,n+2}}{V^\Theta_{g,n}}=1+\frac{b^1_{n}}{g}+\cdots+\frac{b^s_{n}}{g^s}+\mathit{O}\left(\frac{1}{g^{s+1}}\right),
\end{equation}
Moreover, each $a_n^i$ and $b^i_n$ is a polynomial in $\mathbb{Q}[n,\pi^{-1}]$ of degree $i$ in $n$ and of degree at most $2i$ in $\pi^{-1}$.
\end{itemize}
\end{theorem}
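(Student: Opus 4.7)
The plan is to mirror the Mirzakhani-Zograf inductive algorithm, with the super recursions $(\mathbf{\Rmnum{1}})$-$(\mathbf{\Rmnum{3}})$ from \cite{HLX} playing the role of Mirzakhani's classical recursion formulae. The essential new analytic input I would need is a super analogue of Wu-Xue's two-sided estimate \cite[Lemma 22]{nie2023large}, bounding products $V^\Theta_{g_1,n_1}V^\Theta_{g_2,n_2}$ and $V^\Theta_{g-1,n+2}$ against the ``diagonal'' $V^\Theta_{g,n}$ up to a universal constant with an appropriate decay in the genus partition. This estimate is what separates the leading boundary term of the recursion from all the other contributions, and it is essentially the only place where the behavior of $V^\Theta$ at arbitrary $(g,n)$ enters the inductive step.

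I would establish (2) first, since it bootstraps (1). Applying $(\mathbf{\Rmnum{3}})$ to $V^\Theta_{g,n+1}=[\tau_0^{n+1}]^\Theta_g$ with one distinguished boundary splits the right-hand side into a smooth part, a sum of terms proportional to $V^\Theta_{g,n}$, and a gluing part involving $V^\Theta_{g-1,n+1}$ and products $V^\Theta_{h,|I|+1}V^\Theta_{g-h,|J|+1}$. The super Wu-Xue bound shows the gluing part is subdominant while the smooth part yields the leading factor $\tfrac{\pi}{2}(2g-2+n)$; expanding the recursion kernel, whose dependence on the coefficients $a_L$ must be tracked order by order, then gives \eqref{Nasymp}. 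The dual argument, with the irreducible gluing contribution as the dominant one, produces \eqref{Gasymp}. For \eqref{Tauasymp} I then induct on $|\mathbf{d}|$: each application of the super Mirzakhani recursion trades a $\tau_{d_i}$ for a linear combination of super intersection numbers with strictly smaller $|\mathbf{d}|$ plus boundary terms controlled exactly as above, and the factor $4^{|\mathbf{d}|}=\prod_i 2^{2d_i}$ absorbs the normalization in \eqref{superint}. Since each step of the algorithm introduces only rational multiples of bounded powers of $\pi^{-2}$ together with polynomial-in-$n$ weights, one reads off that $a_n^i, b_n^i\in\mathbb{Q}[n,\pi^{-1}]$ have degree at most $i$ in $n$ and at most $2i$ in $\pi^{-1}$, while $e^i_{n,\mathbf{d}}\in\mathbb{Q}[\pi^{-1}]$ has degree at most $2i$.

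The main obstacle is the absence of a closed formula for the super coefficients $a_L$ in $(\mathbf{\Rmnum{3}})$, in sharp contrast with Mirzakhani's explicit classical kernel. To run the induction at all orders I must prove, term by term, that $a_L$ itself admits an asymptotic expansion whose coefficients display exactly the claimed polynomial dependence on the relevant parameters; without this one cannot extract polynomial-in-$g^{-1}$ coefficients from the recursion, let alone identify them as polynomials in $n$ and $\pi^{-1}$ of the prescribed degrees. This is precisely the difficulty the introduction flags as requiring ``detailed proofs,'' and it is where I expect most of the technical weight of the argument to sit.
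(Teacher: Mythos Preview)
Your inductive scheme has two structural gaps. First, the recursion $(\mathbf{\Rmnum{3}})$ does \emph{not} decrease $|\mathbf{d}|$: the $A^j_{\mathbf{d}}$-terms contain $[\tau_{L+d_1+d_j}\prod_{i\neq 1,j}\tau_{d_i}]^\Theta_g$ with total degree $|\mathbf{d}|+L\ge|\mathbf{d}|$, and likewise $B_{\mathbf{d}},C_{\mathbf{d}}$ carry degree $|\mathbf{d}|+L-1$. So an induction on $|\mathbf{d}|$ cannot close. The paper instead inducts on the order $s$ of the expansion, and the key device you are missing is the telescoping identity
\[
1-\frac{4^{|\mathbf{d}|}[\tau_{\mathbf{d}}]^\Theta_g}{V^\Theta_{g,n}}=\sum_{\mathbf{k}}\frac{4^{|\mathbf{k}|}\big([\tau_{\mathbf{k}}]^\Theta_g-4[\tau_{\mathbf{k}'}]^\Theta_g\big)}{V^\Theta_{g,n}},
\]
where $\mathbf{k}'$ increments one coordinate of $\mathbf{k}$. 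Applying $(\mathbf{\Rmnum{3}})$ to each \emph{difference} turns the kernel coefficients $a_L$ into $\tfrac{a_L}{4^L}-\tfrac{a_{L-1}}{4^{L-1}}$, and it is precisely the moment sums $\sum_i i^j\big(\tfrac{a_i}{4^i}-\tfrac{a_{i-1}}{4^{i-1}}\big)$ that are polynomials in $\pi^2$ (Lemma~\ref{Coeff-est}); no ``asymptotic expansion of $a_L$'' is needed or available.

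Second, your bootstrap runs in the wrong direction and through the wrong recursions. Applying $(\mathbf{\Rmnum{3}})$ to $V^\Theta_{g,n+1}$ does not produce the factor $\tfrac{\pi}{2}(2g-2+n)$: the $A$-part contributes only $\sim\tfrac{4n}{\pi}V^\Theta_{g,n}$, with the $g$-dependence hidden in $B$ and $C$ in an entangled way. The paper obtains \eqref{Nasymp} directly from $(\mathbf{\Rmnum{2}})$, which already carries the factor $(2g-2+n)$ on the left and expresses the ratio as a rapidly convergent series in the $e$-ratios; similarly \eqref{Gasymp} comes from $(\mathbf{\Rmnum{1}})$, not from the irreducible gluing term of $(\mathbf{\Rmnum{3}})$. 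Consequently the induction is: assuming all three expansions to order $s-1$, derive the $e$-expansion \eqref{Tauasymp} to order $s$ via the telescoping trick and $(\mathbf{\Rmnum{3}})$, and \emph{then} read off $a^s_n,b^s_n$ from $(\mathbf{\Rmnum{2}})$ and $(\mathbf{\Rmnum{1}})$. The super Wu--Xue bound you mention is indeed needed, but only to show the $C$-contribution is $O(g^{-2})$ at each step; it does not by itself separate out a leading term.
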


\begin{remark}\label{Coeffs}
The coefficients $e_{n,\mathbf{d}}^i$, $a_n^i$ and $b_n^i$ in the above expansions \eqref{Tauasymp}-\eqref{Gasymp} can be computed via Algorithm (cf. Section 3.1) recursively. Here, we list the first two terms of them (cf. Theorem 3.1).
$$e^1_{n,\mathbf{d}}=-\frac{4|\mathbf{d}|(|\mathbf{d}|+n-3/2)}{\pi^2},$$ and \begin{align*}e^2_{n,\mathbf{d}}&=\frac{1}{\pi^4}\times\Bigg[8|\mathbf{d}|^4+(16n-40)|\mathbf{d}|^3+\Big(8n^2+(2\pi^2-48)n-6\pi^2+62\Big)|\mathbf{d}|^2\\&\qquad+\Big((2\pi^2-12)n^2-(9\pi^2-44)n-38+9\pi^2-\frac{\pi}{4}\Big)|\mathbf{d}|\Bigg]+\frac{n-s}{16\pi^2},\end{align*}
where $s:=\#\{i|d_i=0\}$ denotes the number of zero in $\mathbf{d}$.
$$ a^1_{n}=\frac{8n-8+\pi^2}{4\pi^2},\qquad b^1_{n}=-\frac{4n-2}{\pi^2},$$ and $$a^2_{n}=\frac{1}{\pi^4}\left[\Big(-\frac{3\pi^2}{2}+8 \Big)n^2-\Big(\frac{\pi^4}{8}-5\pi^2+20\Big)n+\frac{17\pi^4}{64}-\frac{\pi^3}{16}-\frac{27\pi^2}{8}+\frac{\pi}{4}+12 \right],$$ $$b^2_n=\frac{1}{\pi^4}\left[(2\pi^2-4)n^2-(7\pi^2-12)n+\frac{13\pi^2}{8}-\frac{\pi}{2}-8\right].$$
\end{remark}
\begin{remark}
In view of \eqref{summation}-\eqref{CV}, $e^i_{n,\mathbf{d}}$ depends on $|\mathbf{d}|$ and the number of $s$ for $s\leq i-2$ in $\mathbf{d}$. Similar phenomenon also appears in the large genus asymptotics of Weil-Petersson volumes \cite{mirzakhani2015towards,Huang1} and $\psi$-intersection numbers \cite[Lemma 3.7]{liu2014remark} and \cite[Conjecture 1]{guo2024large}.
\end{remark}

We obtain the complete expansions of super Weil-Petersson volumes in terms of inverse powers of $g$ as the following.

\begin{theorem}\label{superWP}
There exists a universal constant $0<C<\infty$ such that for any given $s\geq 1$, $n\geq 0$,
$$V_{g,n}^{\Theta}=C\frac{(2g-3+n)!\pi^{2g+n}}{2^n\sqrt{g}}\left(1+\frac{d_n^1}{g}+\cdots+\frac{d_n^s}{g^s}+\mathit{O}\Big(\frac{1}{g^{s+1}}\Big)\right),\ \ \text{as}\ g\to\infty.$$ Each term $d_n^i$ above is a polynomial in $n$ of degree $2i$ with coefficients in $\mathrm{Q}[\pi^{-1}]$ and can be computed recursively. Moreover, the coefficient of $d_n^i$ at $n^{2i}$ equals $\frac{(-1)^i}{i!\pi^{2i}}$ and 
$$d_n^1=-\frac{n}{4}+\frac{19}{32}-\frac{1}{8\pi}-\frac{1}{\pi^2}\left(n^2-3n+\frac{25}{8}\right).$$
\end{theorem}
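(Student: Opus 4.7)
The plan is to reduce to the case $n=0$ using the recursion \eqref{Nasymp}, establish the base case via a combination of \eqref{Gasymp} and a super analogue of Wu-Xue's two-sided estimate, and then propagate to general $n$.

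First, iterating \eqref{Nasymp} for $k=0,\ldots,n-1$ yields
$$V^\Theta_{g,n} = V^\Theta_{g,0}\cdot \frac{\pi^n}{2^n}\cdot\frac{(2g-3+n)!}{(2g-3)!}\cdot\prod_{k=0}^{n-1}\left(1+\frac{a^1_k}{g}+\frac{a^2_k}{g^2}+\cdots\right)^{-1}.$$
Since $a^i_k$ is polynomial in $k$ of degree $i$ by Theorem \ref{Polythm}(2), the product above admits a full asymptotic expansion in $1/g$ whose coefficient at $1/g^i$ is a polynomial in $n$ of degree at most $2i$. This reduces the theorem to proving that
$$\phi(g):=\frac{V^\Theta_{g,0}\sqrt{g}}{(2g-3)!\,\pi^{2g}}$$
has an expansion $\phi(g)=C(1+d_0^1/g+d_0^2/g^2+\cdots)$ for some universal constant $C>0$.

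Next, combining \eqref{Gasymp} at $(g,0)$ with two applications of \eqref{Nasymp} at $(g-1,0)$ and $(g-1,1)$ produces a closed recursion
$$\frac{V^\Theta_{g,0}}{V^\Theta_{g-1,0}}=\pi^2(2g-3)(2g-4)\left(1+\frac{R_1}{g}+\frac{R_2}{g^2}+\cdots\right),$$
and dividing by the conjectured asymptotic gives
$$\frac{\phi(g)}{\phi(g-1)}=\sqrt{\tfrac{g}{g-1}}\left(1+\frac{R_1}{g}+\frac{R_2}{g^2}+\cdots\right).$$
Substituting $a^1_0=1/4-2/\pi^2$, $a^1_1=1/4$, and $b^1_0=2/\pi^2$ from Remark \ref{Coeffs} yields the crucial cancellation $R_1=-1/2$, so that $\phi(g)/\phi(g-1)=1+O(1/g^2)$. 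Applying the super version of Wu-Xue's two-sided estimate (the extension of \cite[Lemma 22]{nie2023large} announced in the introduction) gives $\phi(g)\asymp 1$, and a standard telescoping argument in the spirit of \cite[Proposition 3.1]{mirzakhani2015towards} then extracts a universal constant $C>0$ and a full asymptotic expansion $\phi(g)=C(1+\sum_{i\geq 1}d_0^i/g^i)$ to all orders.

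Combining these two steps gives the expansion for arbitrary $n$, with each $d_n^i$ polynomial of degree $2i$ in $n$ inherited from the product above. The top coefficient $(-1)^i/(i!\pi^{2i})$ at $n^{2i}$ comes from iterating $a^1_k\sim 2k/\pi^2$: the sum $\sum_{k=0}^{n-1}(-a^1_k)$ contributes $-n^2/\pi^2$ at order $1/g$, and taking $i$-fold products of this produces $(-n^2/\pi^2)^i/i!$ at order $1/g^i$. The explicit formula for $d_n^1$ then follows by collecting all $1/g$ contributions from the product in the first step and adding the constant $d_0^1$ read off from the $1/g^2$ coefficient of $\phi(g)/\phi(g-1)$. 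The main obstacle is the verification of $R_1=-1/2$ and its higher-order analogues ensuring summability of $\log(\phi(g)/\phi(g-1))$: without these cancellations, no expansion of the stated shape could exist. In practice this requires running the algorithm of Section 3.1 to compute the $a^i_n$ and $b^i_n$ and tracking them through the super version of Mirzakhani's recursion $(\mathbf{III})$, whose coefficients $a_L$ — unlike the classical case — admit no closed formula.
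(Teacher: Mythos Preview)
Your approach is essentially the same as the paper's: reduce to $n=0$ via \eqref{Nasymp}, obtain the ratio expansion for $V^\Theta_{g,0}/V^\Theta_{g-1,0}$ by combining \eqref{Gasymp} with two applications of \eqref{Nasymp} (this is exactly the identity \eqref{Vratio}), verify the cancellation $R_1=-\tfrac12$ (packaged in the paper as the factor $(1-\tfrac{1}{2g})$ in Corollary~5.2), telescope using Fact~2, and then propagate to general $n$ via Fact~5. Your derivation of the leading coefficient $(-1)^i/(i!\pi^{2i})$ also matches the paper's use of Fact~5 with $l=-2/\pi^2$.

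There is one misstep. You invoke ``the super version of Wu--Xue's two-sided estimate'' to conclude $\phi(g)\asymp 1$, but the extension of \cite[Lemma~22]{nie2023large} in this paper is Theorem~\ref{Bound}, which bounds $V^\Theta_{g,n}(2\mathbf L)/V^\Theta_{g,n}$ as a function of the boundary lengths $L_i$; it says nothing about the growth of $V^\Theta_{g,0}$ in $g$ and does not yield $\phi(g)\asymp 1$. Fortunately the step is superfluous: once $\phi(g)/\phi(g-1)=1+O(1/g^2)$ is established, the product $\prod_j\bigl(1+O(1/j^2)\bigr)$ converges absolutely to a finite positive number, so $\phi(g)\to C\in(0,\infty)$ directly, and Fact~2 then supplies the full expansion. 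This is precisely how the paper argues; the Wu--Xue-type estimate is used only later, in the proof of Theorem~\ref{GPRSConj}.
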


From a basic estimate of $\frac{4^{|\mathbf{d}|}[\tau_{d_1}\cdots\tau_{d_n}]^\Theta}{V_{g,n}^\Theta}$ (cf. Lemma \ref{e1-est}), we give an estimate of $ \frac{V_{g,n}^\Theta(2\mathbf{L})}{V_{g,n}^\Theta}$ as the following.

\begin{theorem}\label{Bound}
Let $g,n\geq 1$ and $L_1,...,L_n\geq 0$, then there exists a const $c=c(n)>0$ independent of $g,L_{1},...,L_n$ such that
$$\prod\limits_{i=1}\limits^{n}\frac{\mathrm{sinh}(L_i/2)}{L_i/2}\bigg(1-c(n)\frac{\sum\limits_{i=1}\limits^{n}L_i^2}{g}\bigg)\leq\frac{V_{g,n}^\Theta(2L_1,...,2L_n)}{V_{g,n}^\Theta}\leq\prod\limits_{i=1}\limits^{n}\frac{\mathrm{sinh}(L_i/2)}{L_i/2}.$$ 
\end{theorem}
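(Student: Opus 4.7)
The plan is to match, term by term, the Taylor series of $V_{g,n}^\Theta(2\mathbf{L})/V_{g,n}^\Theta$ with that of $\prod_i \sinh(L_i/2)/(L_i/2)$, and to transfer control via the pointwise bounds on the coefficients provided by Lemma \ref{e1-est}. Writing
$$c_\mathbf{d}:=\frac{4^{|\mathbf{d}|}[\tau_{d_1}\cdots\tau_{d_n}]^\Theta_g}{V_{g,n}^\Theta},\qquad T_\mathbf{d}:=\prod_{i=1}^n\frac{L_i^{2d_i}}{4^{d_i}(2d_i+1)!},$$
the expansion \eqref{VTheta2} yields $V_{g,n}^\Theta(2\mathbf{L})/V_{g,n}^\Theta=\sum_{|\mathbf{d}|\leq g-1}c_\mathbf{d}\,T_\mathbf{d}$, whereas $\prod_i \sinh(L_i/2)/(L_i/2)=\sum_{\mathbf{d}\geq 0}T_\mathbf{d}$. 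The two inequalities thus reduce to controlling $c_\mathbf{d}$ against $1$ while handling the truncation tail $|\mathbf{d}|\geq g$.

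The upper bound is immediate from $c_\mathbf{d}\leq 1$ (the easy half of Lemma \ref{e1-est}) together with the nonnegativity of every $T_\mathbf{d}$. For the lower bound I would invoke the quantitative part of Lemma \ref{e1-est}, which, consistent with the leading asymptotic $e^1_{n,\mathbf{d}}=-4|\mathbf{d}|(|\mathbf{d}|+n-3/2)/\pi^2$ from Remark \ref{Coeffs}, should give a uniform defect $1-c_\mathbf{d}\leq C_1(n)(1+|\mathbf{d}|)^2/g$ throughout the admissible range. Decomposing
$$\prod_i\frac{\sinh(L_i/2)}{L_i/2}-\frac{V_{g,n}^\Theta(2\mathbf{L})}{V_{g,n}^\Theta}=\sum_{|\mathbf{d}|\leq g-1}(1-c_\mathbf{d})T_\mathbf{d}+\sum_{|\mathbf{d}|\geq g}T_\mathbf{d},$$
and exploiting $(1+|\mathbf{d}|)^2/g\geq 1$ on the tail to absorb the second sum into the first, this is bounded by $\tfrac{C_1(n)+1}{g}\sum_{\mathbf{d}\geq 0}(1+|\mathbf{d}|)^2 T_\mathbf{d}$.

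It then remains to establish the combinatorial inequality
$$\sum_{\mathbf{d}\geq 0}(1+|\mathbf{d}|)^2 T_\mathbf{d}\leq C_2(n)\Bigl(1+\sum_{i=1}^n L_i^2\Bigr)\prod_{i=1}^n\frac{\sinh(L_i/2)}{L_i/2},$$
which I would reduce, using $|\mathbf{d}|^2\leq n\sum_i d_i^2$, to the one-variable bounds $\sum_{d\geq 0}d^k u^{2d}/(2d+1)!\leq C_k\,u^{2k}\,(\sinh u)/u$ for $k=0,1,2$; these follow by applying $u\partial_u$ twice to $\sinh u/u$ and elementary manipulation. The additive constant inside $(1+\sum L_i^2)$ is harmless, since the claimed inequality is vacuous whenever $\sum L_i^2\geq g/c(n)$. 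Combining the three steps yields the stated $c(n)$.

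The main obstacle is establishing the uniform quantitative defect in Lemma \ref{e1-est}: while the leading asymptotic predicts the correct quadratic order in $|\mathbf{d}|$, the argument above demands $(1-c_\mathbf{d})=O\bigl((1+|\mathbf{d}|)^2/g\bigr)$ uniformly over the entire range $|\mathbf{d}|\leq g-1$, not merely in the large-$g$ limit for fixed $\mathbf{d}$. This uniformity is precisely what allows the tail $|\mathbf{d}|\geq g$ to be swept into the same combinatorial estimate without a separate analytic treatment.
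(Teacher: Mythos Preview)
Your approach is essentially correct and tracks the paper's proof: both hinge on Lemma~\ref{e1-est} for the coefficient defect and on the Cauchy--Schwarz step $|\mathbf{d}|^2\leq n\sum_i d_i^2$ for the combinatorial inequality. The one genuine difference is in the tail $|\mathbf{d}|\geq g$: the paper bounds it separately via Stirling's formula and a case split on whether $e\sum_i L_i^2/(4g)\leq 1/2$, whereas you absorb it into the weighted sum using $(1+|\mathbf{d}|)^2/g\geq 1$. Your route is slightly cleaner, avoiding Stirling and the case distinction.

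Two minor corrections. First, your worry about uniformity is misplaced: Lemma~\ref{e1-est} as stated in the paper already gives $1-c_\mathbf{d}\leq c_0(n)\,|\mathbf{d}|^2/g$ with $c_0$ depending only on $n$, uniformly over all $\mathbf{d}$ with $|\mathbf{d}|\leq g-1$; you may cite it directly. Second, you should use $|\mathbf{d}|^2$ rather than $(1+|\mathbf{d}|)^2$: the tail absorption still works since $|\mathbf{d}|^2/g\geq g\geq 1$ for $|\mathbf{d}|\geq g$, and the combinatorial bound then reads
\[
\sum_{\mathbf{d}\geq 0}|\mathbf{d}|^2\, T_\mathbf{d}\ \leq\ C_2(n)\Bigl(\sum_i L_i^2\Bigr)\prod_i\frac{\sinh(L_i/2)}{L_i/2}
\]
with no additive constant. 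This matters because your dismissal of that constant is not valid: vacuousness of the claimed inequality for $\sum_i L_i^2\geq g/c(n)$ does nothing for small $\sum_i L_i^2$, where a bound of order $(1+\sum_i L_i^2)/g$ cannot be dominated by $c(n)\sum_i L_i^2/g$. With $|\mathbf{d}|^2$ in place the issue simply disappears.
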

From the estimates above, we prove Conjecture \ref{Mainconj} up to a universal constant.

\begin{theorem}\label{GPRSConj}The following constant $C$ is the same as in Theorem \ref{superWP}.

(1). Let $n\geq 1$ be fixed and $L_1,...,L_n\geq 0$ satisfying $L_i=\mathit{o}(\sqrt{g})$ for $1\leq i\leq n$, then as $g\to\infty$
\begin{equation}\label{Vconj2}V^{\Theta}_{g,n}(L_1,...,L_n)\sim C\cdot\frac{\Gamma(2g+n-\frac{5}{2})}{2^{n-\frac{1}{2}}\pi^{-n-2g}}\prod\limits_{i=1}\limits^{n}\frac{\mathrm{sinh}(L_i/4)}{L_i/4}.\end{equation}

(2) For any $\mathbf{d}$ satisfying $|\mathbf{d}|=\mathit{o}(g)$, as $g\to\infty$, we have
\begin{equation}\label{Coeffconj2}
[\prod\limits_{i=1}\limits^{n}\tau_{d_i}]^{\Theta}_g\sim C\cdot\frac{2^{-n-2|\mathbf{d}|}\pi^{2g+n}(2g-3+n)!}{\sqrt{g-|\mathbf{d}|-1}}.\end{equation}
\end{theorem}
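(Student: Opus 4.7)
The plan is to read both parts of the statement as consequences of the asymptotic in Theorem~\ref{superWP} combined with the $L_i$-dependent control coming from Theorem~\ref{Bound} (whose engine is Lemma~\ref{e1-est}). As a preparation, I would first translate the prefactor of Theorem~\ref{superWP} into the $\Gamma$-function form of the GPRS conjecture. Using Stirling in the form $\Gamma(2g+n-2)/\Gamma(2g+n-\tfrac{5}{2})\sim\sqrt{2g+n-\tfrac{5}{2}}\sim\sqrt{2g}$, one checks
$$\frac{(2g-3+n)!\,\pi^{2g+n}}{2^{n}\sqrt{g}}\;\sim\;\frac{\Gamma(2g+n-\tfrac{5}{2})}{2^{n-\tfrac{1}{2}}\pi^{-n-2g}},$$
so that Theorem~\ref{superWP} already yields the ``$L_i=0$'' case of part~(1) with the same universal constant $C$; the remainder of part~(1) reduces to controlling the ratio $V^{\Theta}_{g,n}(L_1,\dots,L_n)/V^{\Theta}_{g,n}$.

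For part~(1), I would apply Theorem~\ref{Bound} to the rescaled boundary lengths $L_i/2$, which yields
$$\prod_{i=1}^{n}\frac{\sinh(L_i/4)}{L_i/4}\Big(1-\tfrac{c(n)}{4g}\sum_{i=1}^{n}L_{i}^{2}\Big)\;\le\;\frac{V^{\Theta}_{g,n}(L_1,\dots,L_n)}{V^{\Theta}_{g,n}}\;\le\;\prod_{i=1}^{n}\frac{\sinh(L_i/4)}{L_i/4}.$$
Under the hypothesis $L_i=o(\sqrt{g})$, the quantity $\sum L_i^{2}/g$ is $o(1)$, and a sandwich argument gives $V^{\Theta}_{g,n}(L_1,\dots,L_n)/V^{\Theta}_{g,n}\to\prod_{i}\sinh(L_i/4)/(L_i/4)$. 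Multiplying by the asymptotic of $V^{\Theta}_{g,n}$ obtained in the preparation then yields \eqref{Vconj2}.

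For part~(2), combining Theorem~\ref{superWP} with the target formula \eqref{Coeffconj2} shows that the claim is equivalent to
$$\frac{4^{|\mathbf{d}|}[\tau_{d_1}\cdots\tau_{d_n}]^{\Theta}_g}{V^{\Theta}_{g,n}}\cdot\frac{\sqrt{g-|\mathbf{d}|-1}}{\sqrt{g}}\;\longrightarrow\;1.$$
The second factor tends to $1$ directly from $|\mathbf{d}|=o(g)$, so the substantive task is to show that the first factor tends to $1$ uniformly in $\mathbf{d}$ whenever $|\mathbf{d}|/g\to 0$. This is precisely the role of Lemma~\ref{e1-est}, which I expect to provide a two-sided estimate of $4^{|\mathbf{d}|}[\tau_{d_1}\cdots\tau_{d_n}]^{\Theta}_g/V^{\Theta}_{g,n}$ valid uniformly in $\mathbf{d}$; note that Theorem~\ref{Polythm}(1) by itself only delivers the limit for $\mathbf{d}$ fixed. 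The hard part of the whole argument therefore sits in that uniformity, which I would prove by induction on $|\mathbf{d}|$ driven by the recursion formulae $(\mathbf{\Rmnum{1}})$--$(\mathbf{\Rmnum{3}})$ from Section~2.1, carefully tracking how the $\mathbf{d}$-dependent error terms accumulate. Once this uniform estimate is in hand, both halves of Theorem~\ref{GPRSConj} follow from elementary Stirling manipulations combined with Theorem~\ref{superWP}.
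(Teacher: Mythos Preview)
Your approach to part~(1) is exactly the paper's: apply Theorem~\ref{Bound} with the rescaled lengths $L_i/2$, use the sandwich (the correction factor vanishes since $\sum L_i^2/g\to 0$), and combine with Theorem~\ref{superWP} after the Stirling translation $\Gamma(2g+n-\tfrac{5}{2})\sqrt{2g}/(2g-3+n)!\to 1$. For part~(2), the paper simply cites Theorem~\ref{Polythm}(1), Remark~\ref{Coeffs} and Theorem~\ref{superWP}; your route via Lemma~\ref{e1-est} is the same in substance, since Lemma~\ref{e1-est} is precisely the $s=0$ uniform bound that underlies Theorem~\ref{Polythm}(1) (indeed Theorem~4.1). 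Note that Lemma~\ref{e1-est} is already established in Section~2.3, so you need not re-derive it by induction on the recursions.

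There is one genuine gap in your part~(2) plan, and the paper's terse proof shares it. Lemma~\ref{e1-est} gives
\[
0\;\le\;1-\frac{4^{|\mathbf d|}[\tau_{d_1}\cdots\tau_{d_n}]^{\Theta}_g}{V^{\Theta}_{g,n}}\;\le\;c(n)\,\frac{|\mathbf d|^{2}}{g},
\]
which forces the ratio to $1$ only when $|\mathbf d|=o(\sqrt{g})$, not under the weaker hypothesis $|\mathbf d|=o(g)$ stated in the theorem. You write that the first factor should tend to $1$ ``whenever $|\mathbf d|/g\to 0$'', but the $|\mathbf d|^{2}/g$ bound does not vanish in that regime; in fact the explicit leading correction $e^{1}_{n,\mathbf d}/g=-4|\mathbf d|(|\mathbf d|+n-\tfrac{3}{2})/(\pi^{2}g)$ from Remark~\ref{Coeffs} diverges when $|\mathbf d|\sim g^{\alpha}$ with $\alpha>1/2$. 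So as written your argument (and the paper's) covers fixed $\mathbf d$ and more generally $|\mathbf d|=o(\sqrt{g})$ --- which already suffices for the original GPRS Conjecture~\ref{Mainconj} --- but not the full $|\mathbf d|=o(g)$ range.
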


In view of Conjecture \ref{Mainconj} and Theorem \ref{GPRSConj}, the universal constant $C$ should be equal to $\pi^{-\frac{9}{2}}$.

\begin{conjecture}
$\displaystyle C=\pi^{-\frac{9}{2}}$ in Theorem \ref{superWP}.
\end{conjecture}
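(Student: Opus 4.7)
The plan is to determine $C$ by pairing Theorem \ref{superWP} with an \emph{independent} derivation of the leading large-genus prefactor in the one-boundary case. First, I would specialise Theorem \ref{superWP} to $n=1$, giving
$$V_{g,1}^\Theta \;\sim\; C\cdot\frac{(2g-2)!\,\pi^{2g+1}}{2\sqrt{g}},$$
and use the Stirling estimate $\Gamma(2g-\tfrac{3}{2})/\sqrt{2}\sim (2g-2)!/(2\sqrt{g})$ to rewrite the right-hand side as $C\cdot \Gamma(2g-\tfrac{3}{2})\pi^{2g+1}/\sqrt{2}$. Any independent asymptotic of $V_{g,1}^\Theta$ whose leading coefficient is $\pi^{-9/2}\,\Gamma(2g-\tfrac{3}{2})\,\pi^{2g+1}/\sqrt{2}$ therefore forces $C = \pi^{-9/2}$, and by Theorem \ref{superWP} this determines the constant for all $n$.

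To obtain the required independent input, I would use the Eynard-Orantin topological recursion on the JT supergravity spectral curve identified by Stanford-Witten. The steps are: (i) identify $V_{g,1}^\Theta(L)$ as the genus-$g$ one-point invariant of this spectral curve, with spectral density $\rho(E)\propto \cosh(2\pi\sqrt{E})$; (ii) write the all-genus generating function $W_1(\hbar,L)=\sum_{g\geq 1} \hbar^{2g}\,W_{g,1}(L)$ as a Laplace transform of $\rho$; (iii) carry out a rigorous Borel resummation and steepest-descent analysis of the $\hbar$-expansion, where the dominant singularity of the Borel transform sits at the one-instanton action and produces the factor $(2g-2)!\,\pi^{2g}$ together with a one-loop fluctuation prefactor which one computes to be $\pi^{-9/2}/\sqrt{2}$. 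This is the mathematical skeleton beneath the Stanford-Witten and Okuyama physical predictions.

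The hard part is step (iii): rigorous control of the fluctuation determinant around the instanton has no analogue in Mirzakhani-Zograf's bosonic argument and is the main reason this constant has not already been extracted. As a fallback, one could instead pursue a super analogue of Mirzakhani-Zograf's Bessel-transform generating-function identity: integrate $V_{g,1}^\Theta(L)$ against a suitable kernel in $L$, convert the super Mirzakhani recursion $(\mathbf{III})$ into an exactly solvable ODE, and read $C$ off the singular behaviour at the dominant pole on the positive real axis. The obstacle here is precisely the one emphasised throughout this paper --- the coefficients $a_L$ in $(\mathbf{III})$ have no closed form --- so the kernel transform does not collapse to an elementary identity and every step must be supplemented by the refined $a_L$-asymptotics developed in Section 3.
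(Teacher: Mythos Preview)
The statement you are attempting to prove is labelled a \emph{Conjecture} in the paper and is explicitly left open: the paper establishes Theorem~\ref{superWP} with an undetermined universal constant $C$, observes that comparison with the Griguolo--Papalini--Russo--Seminara prediction suggests $C=\pi^{-9/2}$, and then records this as a conjecture without proof. There is therefore no paper proof to compare your proposal against.

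As to the proposal itself: what you have written is a research outline rather than a proof, and you are candid about this. The reduction to $n=1$ is correct and harmless. The substantive content is entirely in your step (iii), and here you have not supplied an argument---you have named the target (rigorous Borel resummation plus a one-loop fluctuation determinant on the Stanford--Witten spectral curve) and then immediately conceded that this is ``the hard part'' with ``no analogue in Mirzakhani--Zograf's bosonic argument.'' That is an accurate assessment of the state of affairs, but it means the proposal does not close the gap; it restates it. The fallback via a super analogue of the Mirzakhani--Zograf Bessel-transform identity is likewise a direction rather than a method: you correctly identify that the non-closed-form coefficients $a_L$ obstruct a direct translation, but offer no concrete mechanism for overcoming this. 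In short, your plan locates the difficulty precisely where the paper leaves it, and neither branch of the proposal contains the missing idea needed to determine $C$.
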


When $n$ varies as $g\to\infty$, we get
\begin{theorem}\label{NsuperWP}
For any $n\geq 0$ satisfying $n=\mathit{o}(\sqrt{g})$, then as $g\to\infty$, $$V_{g,n(g)}^\Theta=C\frac{(2g-3+n(g))!\pi^{2g+n}}{2^n\sqrt{g}}\left(1+\mathit{O}\Big(\frac{n(g)^2}{g}\Big)\right),$$
where $C$ is the same constant as in Theorem \ref{superWP}.\end{theorem}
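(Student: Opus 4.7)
The plan is to bootstrap from Theorem \ref{superWP} (which gives the full expansion for each fixed $n$) by iterating the one-point-drop recursion \eqref{Nasymp} from $n = 0$ up to $n = n(g)$, carefully tracking how the error accumulates. Write $A_{g,n} := (2g-3+n)!\pi^{2g+n}/(2^n\sqrt{g})$ for the leading factor and $R_{g,n} := V_{g,n}^\Theta/(C A_{g,n})$ for the residual ratio, so the desired conclusion is $R_{g,n(g)} = 1 + O(n(g)^2/g)$. Since $A_{g,n+1}/A_{g,n} = (\pi/2)(2g-2+n)$, the relation \eqref{Nasymp} rearranges to
$$\frac{R_{g,n+1}}{R_{g,n}} = \left(1 + \frac{a_n^1}{g} + \cdots + \frac{a_n^s}{g^s} + O(g^{-(s+1)})\right)^{-1},$$
and Theorem \ref{superWP} at $n = 0$ furnishes the initial condition $R_{g,0} = 1 + O(1/g)$.

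The first and crucial step is to upgrade \eqref{Nasymp} to a form that is \emph{uniform} in $n$: for some fixed $s$ the implicit constant in $O(g^{-(s+1)})$ should be bounded by a polynomial in $n$ of controlled degree, valid for all $0 \leq n \leq \sqrt{g}$. I would obtain this by inducting on $s$ inside the algorithm of Section 3.1 and using the fact, established in Theorem \ref{Polythm}(2), that each $a_n^i$ is itself a polynomial of degree $i$ in $n$; this polynomial growth in $n$ should then propagate through the formal manipulations to the remainder term.

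Granted this uniformity, take logarithms and telescope:
$$\log R_{g,n(g)} = \log R_{g,0} - \sum_{k=0}^{n(g)-1}\log\left(1 + \frac{a_k^1}{g} + \frac{a_k^2}{g^2} + \cdots\right).$$
Because $a_n^1 = (8n - 8 + \pi^2)/(4\pi^2)$ is linear in $n$, the leading piece $-g^{-1}\sum_{k=0}^{n(g)-1} a_k^1$ evaluates to $-(n(g)^2 - 3n(g))/(\pi^2 g) - n(g)/(4g) = O(n(g)^2/g)$. The $1/g^j$ contributions for $j \geq 2$ each sum to $O(n(g)^{j+1}/g^j)$, and whenever $n(g) = o(\sqrt{g})$ these are all $o(n(g)^2/g)$; the tail coming from $O(g^{-(s+1)})$, summed $n(g)$ times and weighted by the polynomial-in-$n$ bound, can likewise be made $o(n(g)^2/g)$ by choosing $s$ large enough relative to the rate at which $n(g)/\sqrt{g}$ vanishes. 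Exponentiating then yields $R_{g,n(g)} = 1 + O(n(g)^2/g)$, as required.

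The main obstacle is the uniformity statement in the first step. The expansions of Theorem \ref{Polythm}(2) and Theorem \ref{superWP} are \emph{a priori} only stated for fixed $n$, so the implicit constants could in principle blow up as $n$ grows. I expect the explicit nature of the algorithm in Section 3.1, together with the polynomial-in-$n$ structure of all the coefficients already produced there, to make the required uniform control available by a bookkeeping-heavy but otherwise routine induction.
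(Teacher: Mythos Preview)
Your telescoping framework is exactly the paper's: write $V_{g,n(g)}^\Theta/C_{g,n(g)}$ as the product of the one-step ratios $\bigl(\tfrac{\pi}{2}(2g-2+k)V_{g,k}^\Theta/V_{g,k+1}^\Theta\bigr)^{-1}$ for $k=0,\ldots,n(g)-1$, anchor at $n=0$ via Theorem~\ref{superWP}, and sum. The difference is entirely in how the uniformity of the one-step ratio is obtained.

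You propose to push the algorithm of Section~3.1 and show that the remainder in \eqref{Nasymp} is bounded by a polynomial in $n$ times $g^{-(s+1)}$. The paper does \emph{not} do this. Instead it establishes, by a short direct argument (Proposition~6.1 and the lemma following it), a single crude estimate
\[
\left|\frac{\tfrac{\pi}{2}(2g-2+n)V_{g,n}^\Theta}{V_{g,n+1}^\Theta}-1\right|\leq c_2\,\frac{n}{2g-2+n}
\]
with a \emph{universal} constant $c_2$, valid for all $g,n$. This is proved by bounding $[\tau_k\tau_0^{n-1}]_g^\Theta-4[\tau_{k+1}\tau_0^{n-1}]_g^\Theta$ above and below directly from the recursion $\mathbf{(III)}$ and Lemma~\ref{Coeff-est}, then feeding into \eqref{Nre}. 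Telescoping this bound from $k=0$ to $n(g)-1$ gives $\prod_k(1\pm c_2 k/g)=1+O(n(g)^2/g)$ immediately.

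What this buys the paper is that it bypasses your ``bookkeeping-heavy but otherwise routine induction'' entirely: no control on the $n$-dependence of the remainder polynomials $T_n^s$ in Theorem~4.1 is ever needed. Your route would require showing that those $T_n^s$ grow at most polynomially in $n$, which is plausible but is not established anywhere in the paper and is not obviously a byproduct of the existing induction. The paper's direct estimate is both shorter and logically independent of the full asymptotic-expansion machinery.
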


\textbf{Outlines.} We will give some inequalities of super intersection numbers as well as super Weil-Petersson volumes which are obtained from recursion formulae $(\mathbf{\Rmnum{1}})$-$(\mathbf{\Rmnum{3}})$ in Section 2. As a consequence, we prove Theorem \ref{Bound}. In Section 3, we will give a detailed algorithm which can calculate all the coefficients in \eqref{Tauasymp}-\eqref{Gasymp} and then use it to compute the first three terms stated in Remark \ref{Coeffs}.  As a result, we will prove the polynomial properties of these cofficients in Section 4 to complete the proof of Theorem \ref{Polythm}. In Section 5, we will prove Theorem \ref{superWP} and \ref{GPRSConj}. In the last section, we will give a proof of Theorem \ref{NsuperWP}.

\

\noindent{\bf Acknowledgements} We thank Kefeng Liu and Hao Xu for very helpful discussions.

\vskip 20pt
\section{Basic estimates}\setcounter{equation}{0}

\subsection{Recursion formulae} In this paper, super intersection numbers \eqref{superint} are calculated via the following recursion formulae with initial data:
$$[\tau_0]_{1}^\Theta=\frac{1}{8}\qquad \text{and}\qquad[\ \ ]_{2}^\Theta=2\pi^2\int_{\overline{\mathcal{M}}_{2}}\Theta_2\kappa_1=\frac{3\pi^2}{64}.$$
Note that we require $g\geq 1$ since $\Theta_{0,n}=0$.

Given $\mathbf{d}=(d_1,...,d_n)$ with $|\mathbf{d}|\leq g-1,$ the following recursion formulae hold:
\begin{itemize}[leftmargin=2em]
\item [$\mathbf{(\Rmnum{1})}$.] $$[\tau_0\tau_1\prod\limits_{i=1}\limits^{n}\tau_{d_i}]^{\Theta}_g=[\tau_0^4\prod\limits_{i=1}\limits^{n}\tau_{d_i}]^{\Theta}_{g-1}+6\sum_{\substack{g_1+g_2=g\\[3pt]I\sqcup J=\{1,...,n\}}}[\tau_0^2\prod\limits_{i\in I}\tau_{d_i}]^{\Theta}_{g_1}[\tau_0^2\prod\limits_{i\in J}\tau_{d_i}]^{\Theta}_{g_2}.$$
\item [$\mathbf{(\Rmnum{2})}$.] $$(2g-2+n)[\prod\limits_{i=1}\limits^{n}\tau_{d_i}]^{\Theta}_g=\sum\limits_{L=0}\limits^{g-1}(-1)^L\frac{\pi^{2L}}{(2L+1)!}[\tau_L\prod\limits_{i=1}\limits^{n}\tau_{d_i}]^{\Theta}_g.$$
\item [$\mathbf{(\Rmnum{3})}$.] Let $a_{-1}=0$, $a_0=1$ and for $n\geq 1$, $$\frac{1}{cos(\pi x)}=\sum\limits_{n=0}^{\infty}a_nx^{2n},\ \ \text{or equivalently}\ \ a_n=\frac{1}{\pi}\int_{0}^{\infty}\frac{1}{\mathrm{cosh}(x/2)}\cdot\frac{x^{2n}}{(2n)!}dx.$$ Then we have
$$[\prod\limits_{i=1}\limits^{n}\tau_{d_i}]^{\Theta}_g=\sum\limits_{j=2}^n A^j_{\mathbf{d}}+B_{\mathbf{d}}+C_{\mathbf{d}},$$
where \begin{equation}\label{Aj}A^j_{\mathbf{d}}=\sum\limits_{L=0}^{d_0}(2d_j+1)a_L[\tau_{L+d_1+d_j}\prod\limits_{i\neq 1,j}\tau_{d_i}]^{\Theta}_g,\end{equation}
\begin{equation}\label{B}B_{\mathbf{d}}=2\sum\limits_{L=0}^{d_0}\sum\limits_{k_1+k_2=L+d_1-1}a_L[\tau_{k_1}\tau_{k_2}\prod\limits_{i\neq 1}\tau_{d_i}]_{g-1}^\Theta,\end{equation}
and
\begin{equation}\label{C}C_{\mathbf{d}}=2\sum_{\substack{g_1+g_2=g\\[3pt]I\sqcup J=\{2,...,n\}}}\sum\limits_{L=0}^{d_0}\sum\limits_{k_1+k_2=L+d_1-1}a_L[\tau_{k_1}\prod\limits_{i\in I}\tau_{d_i}]_{g_1}^\Theta[\tau_{k_2}\prod\limits_{i\in J}\tau_{d_i}]_{g_2}^\Theta.\end{equation}
\end{itemize}
\begin{remark}
(1). $\mathbf{(\Rmnum{1})}$ and $\mathbf{(\Rmnum{2})}$ are special cases of \cite[Proposition 4.1 and Proposition 4.2]{HLX}, respectively. $\mathbf{(\Rmnum{3})}$ is the super version of Mirzakhani's recursion formula, which is equivalent to Stanford-Witten's recursion formula \cite[D.30]{stanford2020jt}. See \cite[Theorem 2]{norbury2020enumerative} and \cite[Theorem A.4]{HLX}. \\
(2). $\mathbf{(\Rmnum{2})}$ was also obtaied by Norbury \cite[Theorem 6.14]{norbury2020enumerative}  as the following. $$V_{g,n+1}^\Theta (2\pi i,L_1,...L_n)=(2g-2+n)V_{g,n}^\Theta (L_1,...L_n).$$
(3). Some explict formulae of super intersection numbers were obtained via BGW tau-function in \cite{dubrovin2021tau,yang2021gelfand,yang2021extension}.
\end{remark}

\subsection{Numerical properties of $\mathbf{a_n}$} Note that in Recursion $\mathbf{(\Rmnum{3})}$, $$a_n=\frac{1}{\pi}\int_{0}^{\infty}\frac{1}{\mathrm{cosh}(x/2)}\cdot\frac{x^{2n}}{(2n)!}dx.$$ Theofore, we have \begin{proposition}\begin{equation}\label{an}\frac{a_{n}}{4^{n}}-\frac{a_{n-1}}{4^{n-1}}=\frac{1}{\pi}\int_{0}^\infty \frac{e^{-x/2}}{(\mathrm{cosh} \frac{x}{2})^2}\bigg(\frac{(x/2)^{2n}}{(2n)!}+\frac{(x/2)^{2n-1}}{(2n-1)!}\bigg)dx \end{equation}\end{proposition}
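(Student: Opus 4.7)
The plan is to rewrite the kernel $e^{-x/2}/\cosh^2(x/2)$ on the right-hand side so that integration by parts reduces the proposition to the defining integral formula for $a_n$. First I would use the elementary identity $e^{-x/2}=\cosh(x/2)-\sinh(x/2)$ to decompose
\[
\frac{e^{-x/2}}{\cosh^2(x/2)}=\frac{1}{\cosh(x/2)}-\frac{\sinh(x/2)}{\cosh^2(x/2)}=\frac{1}{\cosh(x/2)}+2\frac{d}{dx}\frac{1}{\cosh(x/2)},
\]
where the last equality uses $\frac{d}{dx}\frac{1}{\cosh(x/2)}=-\frac{\sinh(x/2)}{2\cosh^2(x/2)}$. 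Substituting this into the right-hand side of the claim splits the integral into a ``plain'' part weighted by $1/\cosh(x/2)$ and a ``derivative'' part weighted by $\frac{d}{dx}\frac{1}{\cosh(x/2)}$.

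Next I would integrate the derivative part by parts against the polynomial $\frac{(x/2)^{2n}}{(2n)!}+\frac{(x/2)^{2n-1}}{(2n-1)!}$. The boundary terms vanish: at $x=0$ because $n\geq 1$ ensures $2n-1\geq 1$, and at $x\to\infty$ because $1/\cosh(x/2)$ decays exponentially. Using the chain rule $\frac{d}{dx}\frac{(x/2)^k}{k!}=\frac{1}{2}\frac{(x/2)^{k-1}}{(k-1)!}$, the integration by parts produces
\[
-\int_0^\infty \frac{1}{\cosh(x/2)}\left(\frac{(x/2)^{2n-1}}{(2n-1)!}+\frac{(x/2)^{2n-2}}{(2n-2)!}\right)dx.
\]

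Combining this with the plain part, the two odd-power terms $\frac{(x/2)^{2n-1}}{(2n-1)!}$ cancel exactly, and what remains is
\[
\int_0^\infty \frac{1}{\cosh(x/2)}\left(\frac{(x/2)^{2n}}{(2n)!}-\frac{(x/2)^{2n-2}}{(2n-2)!}\right)dx.
\]
Dividing by $\pi$ and recognizing $\frac{1}{\pi}\int_0^\infty \frac{(x/2)^{2k}}{(2k)!\cosh(x/2)}dx = a_k/4^k$ (which follows immediately from the definition $a_n=\frac{1}{\pi}\int_0^\infty \frac{1}{\cosh(x/2)}\cdot\frac{x^{2n}}{(2n)!}dx$ by factoring out $4^{-n}$ from the even monomial), we obtain $\frac{a_n}{4^n}-\frac{a_{n-1}}{4^{n-1}}$, which is the desired identity.

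There is no real obstacle; the only care needed is in bookkeeping the factor $1/2$ coming from the chain rule $\frac{d}{dx}(x/2)^k$ and verifying that the odd-power cross terms cancel after integration by parts. The hypothesis $n\geq 1$ is used solely to guarantee that the boundary term at the origin vanishes.
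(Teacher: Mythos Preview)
Your proof is correct and uses essentially the same ingredients as the paper: the decomposition $e^{-x/2}=\cosh(x/2)-\sinh(x/2)$ together with $\frac{d}{dx}\frac{1}{\cosh(x/2)}=-\frac{\sinh(x/2)}{2\cosh^2(x/2)}$ and integration by parts against the polynomial factor. The only difference is organizational: you package the kernel as $\frac{1}{\cosh(x/2)}+2\frac{d}{dx}\frac{1}{\cosh(x/2)}$ and integrate by parts once, whereas the paper subtracts the right-hand side from $a_n/4^n$ and performs two successive integrations by parts, but the underlying computation is the same.
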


\begin{proof}
\begin{align*}\frac{a_{n}}{4^{n}}-RHS&=\frac{1}{\pi}\bigg[\int_{0}^\infty \frac{\mathrm{sinh}\frac{x}{2}}{(\mathrm{cosh} \frac{x}{2})^2}\cdot\frac{(x/2)^{2n}}{(2n)!}dx-\int_{0}^\infty\frac{e^{-x/2}}{(\mathrm{cosh} \frac{x}{2})^2}\cdot\frac{(x/2)^{2n-1}}{(2n-1)!}dx \bigg]\\&=\frac{1}{\pi}\bigg[\int_{0}^\infty \frac{1}{\mathrm{cosh} \frac{x}{2}}\cdot\frac{(x/2)^{2n-1}}{(2n-1)!}dx-\int_{0}^\infty\frac{e^{-x/2}}{(\mathrm{cosh} \frac{x}{2})^2}\cdot\frac{(x/2)^{2n-1}}{(2n-1)!}dx \bigg]\\&=\frac{1}{\pi}\int_{0}^\infty\frac{\mathrm{sinh}\frac{x}{2}}{(\mathrm{cosh} \frac{x}{2})^2}\frac{x^{2n-1}}{(2n-1)!}dx\\&=\frac{1}{\pi}\int_{0}^\infty\frac{1}{\mathrm{cosh} \frac{x}{2}}\frac{x^{2n-2}}{(2n-2)!}dx=\frac{a_{n-1}}{4^{n-1}}.\end{align*}
\end{proof}

\begin{lemma}\label{Coeff-est}
The sequence $\{a_i\}_{i=-1}^\infty$ with $a_{-1}=0$ and $a_0=1$ is increasing. Moreover,
\begin{itemize}[leftmargin=2em]
\item [(1)]$\displaystyle \frac{a_{i}}{4^{i}}-\frac{a_{i-1}}{4^{i-1}}\asymp \frac{1}{4^i}$.\\
\item [(2)]$\displaystyle\sum\limits_{i=0}\limits^\infty \bigg(\frac{a_{i}}{4^{i}}-\frac{a_{i-1}}{4^{i-1}}\bigg)=\frac{4}{\pi}$.\\
\item [(3)]$\displaystyle\sum\limits_{i=0}\limits^\infty i \bigg(\frac{a_{i}}{4^{i}}-\frac{a_{i-1}}{4^{i-1}}\bigg)=\frac{1}{\pi}$.\\
\item [(4)] For $j\in\mathbb{Z}$ and $j\geq 2$, the sum $$\pi\sum\limits_{i=0}\limits^\infty i^j \bigg(\frac{a_{i}}{4^{i}}-\frac{a_{i-1}}{4^{i-1}}\bigg)$$ is a polynomial in $\pi^2$ of degree $\lfloor j/2 \rfloor$ with rational coefficients.
\end{itemize}
\end{lemma}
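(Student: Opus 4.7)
The plan is to exploit the integral representation \eqref{an} together with the classical partial-fraction expansion $\sec(\pi x)=\tfrac{4}{\pi}\sum_{n\ge 0}\tfrac{(-1)^n(2n+1)}{(2n+1)^2-4x^2}$, which jointly encode the structure of $\{a_i\}$. Positivity of the integrand in \eqref{an} immediately gives monotonicity of $\{a_n/4^n\}$ and hence (since $4^n\ge 4^{n-1}$) of $\{a_n\}$ itself. For (1) I would bound $\tfrac{e^{-x/2}}{\cosh^{2}(x/2)}\le 4e^{-3x/2}$ and substitute $t=3x/2$, turning \eqref{an} into two $\Gamma$-integrals and yielding $\tfrac{a_n}{4^n}-\tfrac{a_{n-1}}{4^{n-1}} \le \tfrac{32}{3\pi\cdot 9^n}=O(4^{-n})$. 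For (2), telescoping gives $\sum_{i=0}^N(\tfrac{a_i}{4^i}-\tfrac{a_{i-1}}{4^{i-1}})=\tfrac{a_N}{4^N}$; expanding the partial-fraction identity geometrically identifies $a_k=\tfrac{4^{k+1}}{\pi}\beta(2k+1)$ for the Dirichlet beta function $\beta$, and $\beta(2k+1)\to 1$ delivers the limit $4/\pi$.

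For (3), I would interchange sum and integral in \eqref{an} using
\[
\sum_{i\ge 0}i\frac{y^{2i}}{(2i)!}=\frac{y\sinh y}{2},\qquad \sum_{i\ge 1}i\frac{y^{2i-1}}{(2i-1)!}=\frac{y\cosh y+\sinh y}{2}.
\]
After substituting $y=x/2$ and simplifying via $e^{-y}\sinh y=\tfrac{1-e^{-2y}}{2}$, the computation reduces to $\int_0^\infty y/\cosh^{2}y\,dy=\ln 2$ (integration by parts) and $\int_0^\infty (1-e^{-2y})/\cosh^{2}y\,dy=2-2\ln 2$ (via $t=\tanh y$); these combine cleanly to $1/\pi$.

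Part (4) is the main obstacle. I would use the generating identity $(1-z)\sec(\pi\sqrt{z}/2)=\sum_k\bigl(\tfrac{a_k}{4^k}-\tfrac{a_{k-1}}{4^{k-1}}\bigr)z^k$ together with the evaluation $(z\partial_z)^j f|_{z=1}=\sum_{m=1}^{j}(-1)^{m}m!\,S(j,m)\,b_m$, where $S(j,m)$ is a Stirling number of the second kind and $b_m$ is the coefficient of $(1-z)^{m}$ in the Taylor expansion of $f$ at $z=1$. The partial-fraction formula for secant yields $b_m=\tfrac{4(-1)^{m-1}}{\pi\,4^m}\sigma_m$ with $\sigma_m:=\sum_{n\ge 1}\tfrac{(-1)^n(2n+1)}{[n(n+1)]^m}$, reducing (4) to showing $\sigma_m\in\mathbb{Q}[\pi^{2}]$ of degree $\lfloor m/2\rfloor$. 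The crucial observation is the antisymmetry $f(-n-1)=-f(n)$ of $f(n):=(2n+1)/[n(n+1)]^m$; writing the partial fraction $f(n)=\sum_{k=1}^{m}\bigl(c_{m,k}/n^{k}+d_{m,k}/(n+1)^{k}\bigr)$, uniqueness forces $d_{m,k}=(-1)^{k+1}c_{m,k}$. Using $\sum_{n\ge 1}(-1)^n/n^k=-\eta(k)$ and $\sum_{n\ge 1}(-1)^n/(n+1)^k=\eta(k)-1$, the coefficient of $\eta(k)$ in $\sigma_m$ equals $d_{m,k}-c_{m,k}$, which vanishes for odd $k$. This is the key cancellation: all $\zeta(\text{odd})$ and $\ln 2$ values (which would otherwise obstruct the polynomial-in-$\pi^2$ structure) disappear, leaving only $\eta(2l)=(1-2^{1-2l})\zeta(2l)\in\mathbb{Q}\cdot\pi^{2l}$. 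The degree bound $\lfloor m/2\rfloor$ is then immediate, and exactness can be verified from the leading partial-fraction coefficient $c_{m,m}=1$. Summing over $m$ with the Stirling weights gives the desired polynomial for $\pi\sum_i i^j\bigl(\tfrac{a_i}{4^i}-\tfrac{a_{i-1}}{4^{i-1}}\bigr)$. The conceptual hard point is spotting and exploiting the $n\mapsto -n-1$ antisymmetry; once it is in hand the remainder is bookkeeping.
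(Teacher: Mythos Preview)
Your proposal is correct and, for part~(4), takes a genuinely different route from the paper. The paper stays entirely inside the integral representation \eqref{an}: it writes $\sum_i (i+1)^j\tfrac{(x/2)^{2i+2}}{(2i+2)!}$ and the companion odd sum via the differential operators $D=\tfrac{x}{2}\tfrac{d}{dx}$ and $F=\tfrac{d}{dx}\tfrac{x}{2}$ acting on $\cosh(x/2)$ and $\sinh(x/2)$, then integrates the resulting $(x/2)^l\cosh$ and $(x/2)^l\sinh$ terms against $e^{-x/2}/\cosh^2(x/2)$ to produce explicit $\zeta$-values; the vanishing of odd $\zeta$'s is reduced to a linear identity among the operator coefficients $m^{(l)}_{*,j},n^{(l)}_{*,j}$ that is checked by induction on $j$. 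Your argument instead works with the generating function $(1-z)\sec(\pi\sqrt{z}/2)$, evaluates $(z\partial_z)^j$ at $z=1$ through its Taylor coefficients there (Stirling numbers), and pulls those coefficients out of the partial-fraction expansion of secant as the alternating sums $\sigma_m$. The cancellation of odd $\eta$-values (hence odd $\zeta$ and $\ln 2$) then comes from a single structural observation, the antisymmetry $f(-n-1)=-f(n)$ forcing $d_{m,k}=(-1)^{k+1}c_{m,k}$, rather than from an inductive recursion on operator coefficients. Your route is more conceptual and makes the cancellation mechanism transparent; the paper's route is more computational but has the advantage of never leaving the integral framework already set up in Proposition~2.1. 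One small remark on (1): your bound actually gives $O(9^{-i})$, and in fact the two-sided estimate $\asymp 4^{-i}$ as literally stated does not hold (the differences $a_i-4a_{i-1}$ tend to zero); only the upper bound $O(4^{-i})$ is used elsewhere in the paper, and you correctly supply that.
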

\begin{proof}
Part (1) and (2) can be easily checked from \eqref{an}. As for part (3), a simple calculation shows \begin{align*}2\sum\limits_{i=0}\limits^\infty i \bigg(\frac{a_{i}}{4^{i}}-\frac{a_{i-1}}{4^{i-1}}\bigg)&=2\sum\limits_{i=0}\limits^\infty(i+1) \bigg(\frac{a_{i+1}}{4^{i+1}}-\frac{a_i}{4^i}\bigg)\\&=\frac{1}{\pi}\int_{0}^\infty \frac{e^{-x/2}}{(\mathrm{cosh} \frac{x}{2})^2}\sum\limits_{i=0}^\infty\bigg(\frac{(x/2)^{2i+2}}{(2i+1)!}+\frac{(x/2)^{2i+1}}{(2i)!}+\frac{(x/2)^{2i+1}}{(2i+1)!}\bigg)dx\\&=\frac{1}{\pi}\int_{0}^\infty \frac{e^{-x/2}}{(\mathrm{cosh} \frac{x}{2})^2}\bigg(\frac{x}{2}e^{x/2}+\mathrm{sinh}\frac{x}{2}\bigg)dx\\&=\frac{2}{\pi}.\end{align*}

Also, it suffices to prove $$\pi\sum\limits_{i=0}\limits^\infty (i+1)^j \bigg(\frac{a_{i+1}}{4^{i+1}}-\frac{a_{i}}{4^{i}}\bigg)$$ is a polynomial in $\pi^2$ of degree $\lfloor j/2 \rfloor$. From \eqref{an}, one has
$$\pi\bigg(\frac{a_{i+1}}{4^{i+1}}-\frac{a_i}{4^i}\bigg)=\int_{0}^\infty \frac{e^{-x/2}}{(\mathrm{cosh} \frac{x}{2})^2}\bigg(\frac{(x/2)^{2i+2}}{(2i+2)!}+\frac{(x/2)^{2i+1}}{(2i+1)!}\bigg)dx.$$
Denote $\displaystyle D:=\frac{x}{2}\frac{d}{dx}$ and $\displaystyle F:=\frac{d}{dx}\frac{x}{2}$, then $$\sum\limits_{i=0}\limits^\infty(i+1)^j\frac{(x/2)^{2i+2}}{(2i+2)!}=D^j(\mathrm{cosh}\frac{x}{2}-1)=:\sum\limits_{l=0}\limits^{j}\bigg(\frac{x}{2}\bigg)^l\bigg(m_{1,j}^{(l)}\mathrm{cosh}\frac{x}{2}+n_{1,j}^{(l)}\mathrm{sinh}\frac{x}{2}\bigg),$$ and $$\sum\limits_{i=0}\limits^\infty(i+1)^j\frac{(x/2)^{2i+1}}{(2i+1)!}=F^j(\mathrm{sinh}\frac{x}{2})=:\sum\limits_{l=0}\limits^{j}\bigg(\frac{x}{2}\bigg)^l\bigg(m_{2,j}^{(l)}\mathrm{cosh}\frac{x}{2}+n_{2,j}^{(l)}\mathrm{sinh}\frac{x}{2}\bigg),$$
where $\{m_{\ast,j}^{(l)}\}_{l=0}^{j}$ and $\{n_{\ast,j}^{(l)}\}_{l=0}^{j}$ are rational numbers and they can be computed recursively from $j$ to $j+1$. Recall that $\displaystyle\zeta(l+1)=\frac{1}{l!}\int_{0}^\infty \frac{x^l}{e^x-1}dx.$ Theofore, we have \begin{align*}&\int_{0}^\infty \frac{e^{-x/2}}{(\mathrm{cosh} \frac{x}{2})^2}\cdot\mathrm{cosh} \frac{x}{2}\cdot\bigg(\frac{x}{2}\bigg)^l dx\\&=2^{1-l}\int_0^\infty\frac{x^l}{e^x+1}dx \\&=2^{1-l}\int_0^\infty\bigg(\frac{1}{e^x+1}-\frac{1}{e^x-1}\bigg)x^ldx+2^{1-l}\int_0^\infty\frac{x^l}{e^x-1}dx\\&=-2^{2-l}\int_0^\infty\frac{x^l}{e^{2x}-1}dx+l!2^{1-l}\zeta(l+1)\\&=l!(2^{1-l}-2^{1-2l})\zeta(l+1).\end{align*}
Similarly, $$\int_{0}^\infty \frac{e^{-x/2}}{(\mathrm{cosh} \frac{x}{2})^2}\cdot\mathrm{sinh} \frac{x}{2}\cdot\bigg(\frac{x}{2}\bigg)^l dx=l![(2^{2-l}-2^{3-2l})\zeta(l)-(2^{1-l}-2^{1-2l})\zeta(l+1)]. $$
Therefore, the two identities above implies that $\zeta(l)$ for $l=2k+1$ and $0\leq l\leq j$ makes no contribution to the sum $$\pi\sum\limits_{i=0}\limits^\infty(i+1)^j \bigg(\frac{a_{i+1}}{4^{i+1}}-\frac{a_i}{4^i}\bigg)$$ if and only if $$(2k+1)\bigg(m_{2,j}^{(2k+1)}+n_{1,j}^{(2k+1)}\bigg)+m_{1,j}^{(2k)}-m_{2,j}^{(2k)}-n_{1,j}^{(2k)}+n_{2,j}^{(2k)}=0,$$ which is easy to verify by induction from $j$ to $j+1$. Thus only $\zeta(2k)$ for $1\leq k\leq\lfloor\frac{j}{2} \rfloor$ and rational numbers $m_{\ast,j}^{(l)}$, $n_{\ast,j}^{(l)}$ for $0\leq l\leq j$ contribute to the summation $\displaystyle \pi\sum\limits_{i=0}\limits^\infty(i+1)^j \bigg(\frac{a_{i+1}}{4^{i+1}}-\frac{a_i}{4^i}\bigg)$, which implies part (4).
\end{proof}

\subsection{Basic properties for super intersection numbers} 

\begin{lemma}\label{Basic-est}
For any $g\geq 1$ and $n\geq 0$,
\begin{itemize}[leftmargin=2em]
\item [(1).]$\displaystyle 4^{|\mathbf{d}|+1}[\tau_{d_1+1}\prod\limits_{i=2}\limits^{n}\tau_{d_i}]^\Theta_g\leq4^{|\mathbf{d}|}[\prod\limits_{i=1}\limits^{n}\tau_{d_i}]^\Theta_g\leq V_{g,n}^\Theta$\\
\item [(2).] \eqref{Gre} and \eqref{Nre} imply the following result respectively: \begin{equation}\label{G1}4V_{g-1,n+4}^\Theta\leq V_{g,n+2}^\Theta,\end{equation} and \begin{equation}\label{N1}b_0<\frac{\frac{\pi}{2}(2g-2+n)V_{g,n}^\Theta}{ V_{g,n+1}^\Theta}<b_1,\end{equation} where $\displaystyle b_1=\sum\limits_{L=0}\limits^{\infty}\frac{(\pi/2)^{2L+1}}{(2L+1)!}=\mathrm{sinh}(\pi/2)$ and $\displaystyle b_0=\frac{\pi}{2}-\frac{\pi^3}{48}.$
\end{itemize}
\end{lemma}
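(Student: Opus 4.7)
The plan is to prove part (1) first via recursion $(\mathbf{\Rmnum{3}})$ combined with the monotonicity from Lemma~\ref{Coeff-est}(1), and then to derive both inequalities of part (2) by specializing recursions $(\mathbf{\Rmnum{1}})$ and $(\mathbf{\Rmnum{2}})$ at $\mathbf{d}=(0,\dots,0)$ and invoking part (1). Positivity of super intersection numbers is used throughout; this is built into the construction \eqref{VTheta}, so each term appearing on the right-hand side of any of $(\mathbf{\Rmnum{1}})$-$(\mathbf{\Rmnum{3}})$ is non-negative.

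For part (1), I apply $(\mathbf{\Rmnum{3}})$ in parallel to $[\prod_{i=1}^{n}\tau_{d_i}]^{\Theta}_g$ and to $[\tau_{d_1+1}\prod_{i=2}^{n}\tau_{d_i}]^{\Theta}_g$, writing $\mathbf{d}'=(d_1+1,d_2,\dots,d_n)$. Each inner summation index $L$ on the primed side runs only up to $d_0-1$, so after the substitution $M=L+1$ every summand of $A^{j}_{\mathbf{d}'}$, $B_{\mathbf{d}'}$, $C_{\mathbf{d}'}$ is matched with a summand of $A^{j}_{\mathbf{d}}$, $B_{\mathbf{d}}$, $C_{\mathbf{d}}$ carrying the \emph{same} super intersection number factor but with coefficient $a_M$ rather than $4a_{M-1}$. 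Lemma~\ref{Coeff-est}(1) gives $a_M\geq 4a_{M-1}$ for every $M\geq 1$ (equivalently, $\{a_M/4^M\}$ is non-decreasing), so each unprimed term dominates four times its primed counterpart. The unpaired $M=0$ summands on the unprimed side are non-negative, yielding $4[\tau_{d_1+1}\prod_{i=2}^{n}\tau_{d_i}]^{\Theta}_g \leq [\prod_{i=1}^{n}\tau_{d_i}]^{\Theta}_g$. Iterating over the remaining coordinates (using symmetry of the super intersection number in $d_1,\dots,d_n$) delivers $4^{|\mathbf{d}|}[\prod\tau_{d_i}]^{\Theta}_g \leq [\tau_0^n]^{\Theta}_g = V_{g,n}^{\Theta}$.

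For part (2), I specialize $(\mathbf{\Rmnum{1}})$ at $\mathbf{d}=(0,\dots,0)$ of length $n$, obtaining
\begin{equation*}
[\tau_1\tau_0^{n+1}]^{\Theta}_g = V_{g-1,n+4}^{\Theta} + 6\sum_{\substack{g_1+g_2=g\\ I\sqcup J=\{1,\dots,n\}}} V_{g_1,|I|+2}^{\Theta}\,V_{g_2,|J|+2}^{\Theta} \ \geq\ V_{g-1,n+4}^{\Theta},
\end{equation*}
and combine with $4[\tau_1\tau_0^{n+1}]^{\Theta}_g \leq V_{g,n+2}^{\Theta}$ from part (1) to obtain \eqref{G1}. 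For \eqref{N1}, recursion $(\mathbf{\Rmnum{2}})$ at $\mathbf{d}=(0,\dots,0)$, after dividing by $V_{g,n+1}^{\Theta}$ and multiplying by $\pi/2$, rewrites the ratio as
\begin{equation*}
S:=\frac{\tfrac{\pi}{2}(2g-2+n)V_{g,n}^{\Theta}}{V_{g,n+1}^{\Theta}} \ =\ \sum_{L=0}^{g-1}(-1)^{L}\frac{(\pi/2)^{2L+1}}{(2L+1)!}\,r_L, \qquad r_L := \frac{4^{L}[\tau_L\tau_0^n]^{\Theta}_g}{V_{g,n+1}^{\Theta}}.
\end{equation*}
Part (1) gives $r_0=1$ and $\{r_L\}$ non-increasing in $[0,1]$. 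For the upper bound I discard the odd-$L$ (negative) contributions and bound $r_L\leq 1$ on the even part, giving $S\leq \tfrac{1}{2}(\sinh(\pi/2)+\sin(\pi/2)) < \sinh(\pi/2)=b_1$. For the lower bound I isolate the $L=0,1$ terms and observe that the remaining tail $\sum_{L\geq 2}(-1)^{L}(\pi/2)^{2L+1}r_L/(2L+1)!$ is an alternating series with strictly decreasing magnitudes (since $(\pi/2)^2/[(2L+2)(2L+3)]<1$ for $L\geq 0$ and $r_L$ is non-increasing) and positive leading term; the tail is therefore non-negative, so $S\geq \pi/2 - r_1\pi^3/48\geq \pi/2 - \pi^3/48 = b_0$.

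The main obstacle will be the term-by-term matching in part (1): one must verify that after the reindexing $M=L+1$ the pairwise comparison goes through uniformly across all three contribution types $A^{j}_{\mathbf{d}}$, $B_{\mathbf{d}}$, $C_{\mathbf{d}}$ in $(\mathbf{\Rmnum{3}})$, and that the residual $M=0$ (unpaired) summands on the unprimed side are non-negative in every case. The edge cases $|\mathbf{d}|=g-1$ (where the left side of part (1) vanishes by dimension) and $n=1$ (where the $A^{j}$-sum in $(\mathbf{\Rmnum{3}})$ is empty) should be isolated at the outset so that the generic term-matching proceeds uniformly.
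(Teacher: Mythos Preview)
Your proposal is correct and follows essentially the same approach as the paper: part (1) via recursion $(\mathbf{\Rmnum{3}})$ applied in parallel to $\mathbf{d}$ and $\mathbf{d}'$ together with the inequality $a_M\geq 4a_{M-1}$ (which is exactly what \eqref{an} and Lemma~\ref{Coeff-est}(1) encode), and part (2) via $(\mathbf{\Rmnum{1}})$, $(\mathbf{\Rmnum{2}})$ specialized at $\mathbf{d}=0$ combined with part (1). The only cosmetic difference is that for the upper bound in \eqref{N1} you keep track of the parity and obtain the sharper bound $\tfrac12(\sinh(\pi/2)+\sin(\pi/2))$, whereas the paper simply bounds the alternating sum by the sum of absolute values; and you spell out the Leibniz-type pairing for the tail $L\geq 2$ that the paper leaves implicit.
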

\begin{proof}
Set $\mathbf{d}=(d_1,d_2,...,d_n)$ and $\mathbf{d}=(d_1+1,d_2,...,d_n)$ in formula $(\mathbf{\Rmnum{3}})$, respectively, with the help of \eqref{an} and non-negativity of super intersection numbers, we have
$$[\tau_{d_1}\prod\limits_{i=2}\limits^{n}\tau_{d_i}]_g^\Theta-4[\tau_{d_1+1}\prod\limits_{i=2}\limits^{n}\tau_{d_i}]_g^\Theta\geq 0.$$
Hence, (1) follows. 

By part (1) and formula $(\mathbf{\Rmnum{1}})$, we have
$$\frac{4V_{g-1,n+4}^\Theta}{V_{g,n+2}^\Theta}< \frac{4[\tau_1\tau_0^{n+1}]_g^\Theta}{V_{g,n+2}^\Theta}\leq 1.$$
 
From part (1) and formula $(\mathbf{\Rmnum{2}})$, we get
\begin{align*}
\frac{\frac{\pi}{2}(2g-2+n)V_{g,n}^\Theta}{ V_{g,n+1}^\Theta}\leq\sum\limits_{L=0}\limits^{g-1}\frac{(\pi/2)^{2L+1}}{(2L+1)!}=\mathrm{sinh}\left(\frac{\pi}{2}\right),
\end{align*}
and 
$$\frac{\frac{\pi}{2}(2g-2+n)V_{g,n}^\Theta}{ V_{g,n+1}^\Theta}\geq \frac{\pi}{2}-\frac{\pi^3}{48}\frac{[\tau_1\tau_0^n]_g^\Theta}{ V_{g,n+1}^\Theta}\geq \frac{\pi}{2}-\frac{\pi^3}{48}.$$
Therefore, part (2) is proved.
\end{proof}

\begin{remark} For any $g\geq 1$ and $n\geq 0$, \eqref{G1} and \eqref{N1} imply
$$V_{g-1,n+4}^\Theta=\mathit{O}(V_{g,n}^\Theta),$$
and 
\begin{equation}\label{N11}
V_{g,n}^\Theta=\mathit{O}\left(\frac{V_{g,n+1}^\Theta}{g}\right).
\end{equation}
Moreover, $V_{g-1,3}^\Theta=\mathit{O}(V_{g,1}^\Theta)$ can be verified by $$\frac{V_{g-1,3}^\Theta}{V_{g,1}^\Theta}=\frac{V_{g-1,3}^\Theta}{V_{g-1,4}^\Theta}\cdot\frac{V_{g-1,4}^\Theta}{V_{g,2}^\Theta}\cdot\frac{V_{g,2}^\Theta}{V_{g,1}^\Theta},$$
and so is $V_{g-1,2}^\Theta=\mathit{O}(V_{g,0}^\Theta)$. Therefore, we have
\begin{equation}\label{G11}
V_{g-1,n+2}^\Theta=\mathit{O}(V_{g,n}^\Theta).
\end{equation}
\end{remark}

Now we obtain the relation between general super intersection number and super Weil-Petersson volume by \eqref{N11} and \eqref{G11}.

\begin{lemma}\label{e1-est}For any given $\mathbf{d}=(d_1,...,d_n)$, there exists a constant $c_0=c(n)>0$ such that
$$ 0\leq 1-\frac{4^{|\mathbf{d}|}[\tau_{d_1}\cdots\tau_{d_n}]^\Theta_g}{V_{g,n}^\Theta}\leq c_0\frac{|\mathbf{d}|^2}{g}.$$
\end{lemma}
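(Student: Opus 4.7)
The lower bound is immediate: iterating Lemma \ref{Basic-est}(1) gives $4^{|\mathbf{d}|}[\tau_\mathbf{d}]^\Theta_g \leq V^\Theta_{g,n}$. For the upper bound, I plan to proceed by induction on $|\mathbf{d}|$. The base case $|\mathbf{d}|=0$ is trivial. For the inductive step, after permuting indices (using the symmetry of $[\tau_\mathbf{d}]^\Theta_g$) we may assume $d_1\geq 1$; set $\tilde{\mathbf{d}}=(d_1-1,d_2,\dots,d_n)$. The reduction
$$V^\Theta_{g,n}-4^{|\mathbf{d}|}[\tau_\mathbf{d}]^\Theta_g = \big(V^\Theta_{g,n}-4^{|\mathbf{d}|-1}[\tau_{\tilde{\mathbf{d}}}]^\Theta_g\big) + 4^{|\mathbf{d}|-1}\big([\tau_{\tilde{\mathbf{d}}}]^\Theta_g - 4[\tau_\mathbf{d}]^\Theta_g\big)$$
combined with the inductive hypothesis applied to $\tilde{\mathbf{d}}$ reduces the task to proving the step estimate
$$[\tau_{\tilde{\mathbf{d}}}]^\Theta_g - 4[\tau_\mathbf{d}]^\Theta_g \leq \frac{K(n)(|\mathbf{d}|+n)}{g\cdot 4^{|\mathbf{d}|-1}}V^\Theta_{g,n}$$
with $K(n)$ independent of $g$ and $\mathbf{d}$; a linear arithmetic then yields $c(n)=O(K(n)\cdot n)$ closing the induction.

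The step estimate is obtained by applying recursion $(\mathbf{\Rmnum{3}})$ to both $\mathbf{d}$ and $\tilde{\mathbf{d}}$ and subtracting termwise. After re-indexing $L\mapsto L+1$ in the $\mathbf{d}$-sums, each difference $A^j_{\tilde{\mathbf{d}}}-4A^j_{\mathbf{d}}$, $B_{\tilde{\mathbf{d}}}-4B_{\mathbf{d}}$, $C_{\tilde{\mathbf{d}}}-4C_{\mathbf{d}}$ becomes a sum whose typical summand carries the factor
$$a_L-4a_{L-1}=4^L\left(\frac{a_L}{4^L}-\frac{a_{L-1}}{4^{L-1}}\right),$$
which is the object controlled by Lemma \ref{Coeff-est}. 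The $4^L$ is then absorbed by the monotonicity bound $[\tau_\mathbf{k}\cdots]^\Theta_h\leq 4^{-|\mathbf{k}|-\sum d_i}V^\Theta_{h,\#}$ from Lemma \ref{Basic-est}(1), producing an overall factor $4^{1-|\mathbf{d}|-L}$ on each piece. Summing in $L$ uses Lemma \ref{Coeff-est}(2)--(3) to give a finite bound, and summing in $j$ uses $\sum_{j=2}^n (2d_j+1)\leq 2|\mathbf{d}|+n$, yielding
$$\textstyle\sum_j(A^j_{\tilde{\mathbf{d}}}-4A^j_{\mathbf{d}}) = O\!\left(\frac{(|\mathbf{d}|+n)}{4^{|\mathbf{d}|}}V^\Theta_{g,n-1}\right).$$

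For $B_{\tilde{\mathbf{d}}}-4B_{\mathbf{d}}$ the same scheme applies, with the extra inner sum over $(k_1,k_2)$ with $k_1+k_2=L+d_1-2$ contributing a factor $L+d_1-1$; Lemma \ref{Coeff-est}(2)--(3) then give $\sum_L(L+d_1-1)(a_L/4^L-a_{L-1}/4^{L-1})=O(d_1)$, so $B_{\tilde{\mathbf{d}}}-4B_{\mathbf{d}}=O(d_1\cdot 4^{-|\mathbf{d}|}V^\Theta_{g-1,n+1})$. The $C$-difference is treated analogously, with the further sum over $g_1+g_2=g$ and $I\sqcup J=\{2,\dots,n\}$; here the summation produces at most $O(V^\Theta_{g-1,n+1}/g)$ coming from the nonlinear recursion structure (and there are at most $g\cdot 2^{n-1}$ splittings, only $O(2^{n-1})$ of which are nondegenerate up to absorbing the convolution into $V^\Theta_{g-1,n+1}$). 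Combining these three contributions and using the crucial earlier bounds $V^\Theta_{g,n-1}=O(V^\Theta_{g,n}/g)$ from \eqref{N11} and $V^\Theta_{g-1,n+1}=O(V^\Theta_{g,n-1})=O(V^\Theta_{g,n}/g)$ from \eqref{G11}, we obtain the required step estimate with constant depending only on $n$.

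The main obstacle I anticipate is a clean handling of the $C_{\tilde{\mathbf{d}}}-4C_{\mathbf{d}}$ term: the sum over $g_1+g_2=g$ and partitions $I\sqcup J$ must be controlled uniformly in $g$ so that the final constant depends only on $n$. The strategy is to factor each summand using Lemma \ref{Basic-est}(1) in each factor separately, and then reduce the splitting sum to $V^\Theta_{g-1,n+1}$ via \eqref{G1}--\eqref{G11} applied componentwise, which gives a bound of the same order as the $B$-contribution (in fact strictly smaller by another factor $1/g$). Once the three estimates are assembled, the induction closes with $c(n)$ of order $n\cdot K(n)$, which depends only on $n$ as required.
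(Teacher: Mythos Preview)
Your approach is essentially the same as the paper's: telescope the difference $V_{g,n}^\Theta - 4^{|\mathbf d|}[\tau_{\mathbf d}]_g^\Theta$ into single-step increments, expand each step via recursion $(\mathbf{\Rmnum{3}})$, regroup so that $(a_L-4a_{L-1})$ appears, use Lemma~\ref{Basic-est}(1) to strip the $4^{L}$, then invoke Lemma~\ref{Coeff-est}(2)--(3) and the bounds \eqref{N11}, \eqref{G11} on $V_{g,n-1}^\Theta$ and $V_{g-1,n+1}^\Theta$. The paper presents this as a single step-estimate followed by the telescoping sum \eqref{summation}, but your inductive packaging is equivalent.

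The one point where your write-up is looser than the paper is the $C$-term. You speak of ``absorbing the convolution into $V^\Theta_{g-1,n+1}$'' via \eqref{G1}--\eqref{G11} ``componentwise'', but those inequalities do not by themselves control the full sum $\sum_{g_1+g_2=g,\,I\sqcup J} V_{g_1,|I|+1}^\Theta V_{g_2,|J|+1}^\Theta$. The paper's device is cleaner: apply recursion $(\mathbf{\Rmnum{1}})$ with all $d_i=0$ to obtain the bound \eqref{Times1},
\[
\sum_{\substack{g_1+g_2=g\\ I\sqcup J=\{2,\dots,n\}}} V_{g_1,|I|+1}^\Theta\, V_{g_2,|J|+1}^\Theta \;=\; O\!\left(\frac{V_{g,n}^\Theta}{g}\right),
\]
since the join sum is dominated by $[\tau_0\tau_1\cdots]_g^\Theta \le V_{g,m}^\Theta$ and then reduced by \eqref{N11}--\eqref{G11}. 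Plug this in for the $C$-difference and the contribution is at most $O(d_1\cdot 4^{-|\mathbf d|}\,V_{g,n}^\Theta/g)$, exactly the same order as your $B$-estimate. With that correction the step bound reads $c(n)|\mathbf d|\,V_{g,n}^\Theta/g$ and the telescoping sum gives $c(n)|\mathbf d|^2/g$.
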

\begin{proof}
The lower bound is obvious by Lemma \ref{Basic-est} (1). By \eqref{N11} and \eqref{G11}, formula $(\mathbf{\Rmnum{1}})$ implies
\begin{equation}\label{Times1}
\sum_{\substack{g_1+g_2=g\\[3pt]I\sqcup J=\{2,...,n\}}}V_{g_1,|I|+1}^\Theta \cdot V_{g_2,|J|+1}^\Theta=\mathit{O}\left(\frac{V_{g,n}^\Theta}{g}\right),
\end{equation}
where the implied constant is independent of $g$.

Therefore, we get the following from formula $(\mathbf{\Rmnum{1}})$ with the help of Lemma \ref{Coeff-est} (2)-(3), Lemma \ref{Basic-est} and \eqref{N11}-\eqref{Times1}, 
\begin{align*}
&4^{|\mathbf{d}|}[\tau_{d_1}\prod\limits_{i=2}\limits^{n}\tau_{d_i}]_g^\Theta-4^{|\mathbf{d}|+1}[\tau_{d_1+1}\prod\limits_{i=2}\limits^{n}\tau_{d_i}]_g^\Theta\\&\leq \sum\limits_{j=2}\limits^{n}(2d_j+1)\sum\limits_{L=0}\limits^{g-1-|\mathbf{d}|}\Big(\frac{a_{L}}{4^{L}}-\frac{a_{L-1}}{4^{L-1}}\Big)V_{g,n-1}^\Theta+8\sum\limits_{L=0}\limits^{g-1-|\mathbf{d}|}(d_1+L)\Big(\frac{a_{L}}{4^{L}}-\frac{a_{L-1}}{4^{L-1}}\Big) V_{g-1,n+1}^\Theta\\&\qquad+8\sum\limits_{L=0}\limits^{g-1-|\mathbf{d}|}(d_1+L)\Big(\frac{a_{L+1}}{4^{L}}-\frac{a_{L-1}}{4^{L-1}}\Big)\sum_{\substack{g_1+g_2=g\\[3pt]I\sqcup J=\{2,...,n\}}}V_{g_1,|I|+1}^\Theta \cdot V_{g_2,|J|+1}^\Theta\\&\leq\frac{8|\mathbf{d}|-8d_1+4n-4}{\pi}V_{g,n-1}^\Theta+\frac{32d_1+8}{\pi}V_{g-1,n+1}^\Theta+\frac{32d_1+8}{\pi}\\&\qquad\times\sum_{\substack{g_1+g_2=g\\[3pt]I\sqcup J=\{2,...,n\}}}V_{g_1,|I|+1}^\Theta \cdot V_{g_2,|J|+1}^\Theta\\&\leq c(n)|\mathbf{d}|\frac{V_{g,n}^\Theta}{g}.
\end{align*} 
Then the upper bound is a consequnce from \eqref{summation}. So we conclude this lemma.
\end{proof}

From Lemma \ref{e1-est}, we obtain the relation between $V_{g,n}^\Theta$ and $V_{g,n}^\Theta(L_1,...,L_n)$. The proof follows the strategy used in the proof of \cite[Lemma 22]{nie2023large} by Nie-Wu-Xue.

\noindent\textbf{Proof of Theorem \ref{Bound}.} Since
$$\frac{V_{g,n}^\Theta(2L_1,...,2L_n)}{V_{g,n}^\Theta}=\sum\limits_{|\mathbf{d}|\leq g-1}\frac{4^{|\mathbf{d}|}[\tau_{d_1}\cdots\tau_{d_n}]^\Theta_g}{V_{g,n}^\Theta}\cdot\frac{(L_1/2)^{2d_1}}{(2d_1+1)!}\cdots\frac{(L_n/2)^{2d_n}}{(2d_n+1)!},$$
then the upper bound is a simple consequence from Lemma \ref{e1-est}.

As for the lower bound, first, note that 
\begin{align*}\frac{L_1^2}{4}\cdot\prod\limits_{i=1}\limits^{n}\frac{\mathrm{sinh}(L_i/2)}{L_i/2}&=\left(\sum\limits_{d_1=1}\limits^{\infty}\frac{(L_1/2)^{2d_1}}{(2d_1-1)!}\right)\cdot\Bigg(\sum\limits_{d_2,...,d_n=0}\limits^{\infty}\frac{(L_2/2)^{2d_2}\cdots(L_n/2)^{2d_n}}{(2d_2+1)!\cdots(2d_n+1)!}\Bigg)\\&=\sum\limits_{d_1,...,d_n=0}\limits^{\infty}2d_1(2d_1+1)\frac{(L_1/2)^{2d_1}\cdots(L_n/2)^{2d_n}}{(2d_1+1)!\cdots(2d_n+1)!}.\end{align*}
Then, $$\Bigg(\frac{\sum_{i=1}^{n}L_i^2}{4}\Bigg)\cdot\prod\limits_{i=1}\limits^{n}\frac{\mathrm{sinh}(L_i/2)}{L_i/2}=\sum\limits_{d_1,...,d_n=0}\limits^{\infty}\frac{(L_1/2)^{2d_1}\cdots(L_n/2)^{2d_n}}{(2d_1+1)!\cdots(2d_n+1)!}\sum\limits_{i=1}\limits^{n}2d_i(2d_i+1).$$
Therefore, by Cauchy-Schwarz inequality
\begin{equation}\label{1.81}\sum\limits_{|\mathbf{d}|\leq g-1}|\mathbf{d}|^2\frac{(L_1/2)^{2d_1}}{(2d_1+1)!}\cdots\frac{(L_n/2)^{2d_n}}{(2d_n+1)!}\leq\frac{n}{16}\Bigg(\sum_{i=1}^{n}L_i^2\Bigg)\cdot\prod\limits_{i=1}\limits^{n}\frac{\mathrm{sinh}(L_i/2)}{L_i/2}.\end{equation}
The Stirling's formula implies that for large $k>0$, one has 
$$k!\geq \left(\frac{k}{e}\right)^k.$$
 Hence, we have
\begin{align*}
\sum\limits_{|\mathbf{d}|> g-1}\frac{(L_1/2)^{2d_1}}{(2d_1+1)!}\cdots\frac{(L_n/2)^{2d_n}}{(2d_n+1)!}&\leq \sum\limits_{k>g-1}\frac{1}{k!}\sum\limits_{|\mathbf{d}|=k}\frac{k!}{d_1!\cdots d_n!}\bigg(\frac{L_1^2}{4}\bigg)^{d_1}\cdots\bigg(\frac{L_n^2}{4}\bigg)^{d_n}\\&\qquad=\sum\limits_{k>g-1}\frac{1}{k!}\left(\frac{L_1^2+\cdots+L_n^2}{4}\right)^k\\&\leq\sum\limits_{k>g-1}\left(\frac{e(L_1^2+\cdots+L_n^2)}{4k}\right)^k.
\end{align*}
On the one hand, when $\displaystyle \frac{e(L_1^2+\cdots+L_n^2)}{4g}\leq\frac{1}{2}$, we get the following from above
\begin{align*}\sum\limits_{|\mathbf{d}|> g-1}\frac{(L_1/2)^{2d_1}}{(2d_1+1)!}\cdots\frac{(L_n/2)^{2d_n}}{(2d_n+1)!}&\leq 2\left(\frac{e(L_1^2+\cdots+L_n^2)}{4g}\right)^g\\&\leq\frac{4(L_1^2+\cdots+L_n^2)}{g}\prod\limits_{i=1}\limits^{n}\frac{\mathrm{sinh}(L_i/2)}{L_i/2}.\end{align*}
With the help of Lemma \ref{e1-est} and \eqref{1.81}, we obtain that there exists a constant $c_0=c(n)>0$ such that
\begin{align*}
\frac{V_{g,n}^\Theta(2L_1,...,2L_n)}{V_{g,n}^\Theta}&\geq\prod\limits_{i=1}\limits^{n}\frac{\mathrm{sinh}(L_i/2)}{L_i/2}-\sum\limits_{|\mathbf{d}|> g-1}\frac{(L_1/2)^{2d_1}}{(2d_1+1)!}\cdots\frac{(L_n/2)^{2d_n}}{(2d_n+1)!}\\&\qquad-\frac{c_0}{g}\sum\limits_{|\mathbf{d}|\leq g-1}|\mathbf{d}|^2\frac{(L_1/2)^{2d_1}}{(2d_1+1)!}\cdots\frac{(L_n/2)^{2d_n}}{(2d_n+1)!}\\&\geq\prod\limits_{i=1}\limits^{n}\frac{\mathrm{sinh}(L_i/2)}{L_i/2}\Bigg(1-\frac{(4+nc_0/16)\sum\limits_{i=1}\limits^{n}L_i^2}{g}\Bigg).
\end{align*} 
On the other hand, if $\displaystyle \frac{e(L_1^2+\cdots+L_n^2)}{4g}>\frac{1}{2}$, the lower bound is trivial.
\qed

\subsection{Estimates for the join part} The technique to evaluate the third term in $(\mathbf{\Rmnum{3}})$ as the following is to use induction in $(\mathbf{\Rmnum{3}})$.

\begin{lemma}\label{VproductBasic}
Given $r\geq 1$, $n\geq 0$ and $k\geq 0$, there exists $C=C(r,n,k)>0$ such that for any $(g_1,m)$, $(g_2,n)$ with $2g_1+m\geq r$ and $|\mathbf{d}|=d_1+\cdots+d_n=k$, we have
\begin{equation}\label{Key}
[\tau_{d_1}\cdots\tau_{d_m}]_{g_1}^\Theta\times V^\Theta_{g_2,n+r}\leq C(r,n,k)\times [\tau_{d_1}\cdots\tau_{d_m}\tau_0^{n}]_{g_1+g_2,m+n}^\Theta
\end{equation}
\end{lemma}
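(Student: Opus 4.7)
The plan is to prove the inequality by induction on $r\ge 1$, applying the super Mirzakhani recursion $\mathbf{(\Rmnum{3})}$ to the right-hand side $[\tau_{d_1}\cdots\tau_{d_m}\tau_0^n]^\Theta_{g_1+g_2,m+n}$ and extracting specific product contributions from the $C_{\mathbf d}$ piece. Since every super intersection number is non-negative, any single summand (or finite collection of summands) of $C_{\mathbf d}$ furnishes a valid lower bound for the right-hand side.

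For the base case $r=1$, I would apply $\mathbf{(\Rmnum{3})}$ with $\tau_{d_1}$ as the distinguished first argument (in the edge case $m=0$, the condition $2g_1\ge 1$ forces $g_1\ge 1$, so we can distinguish one of the $\tau_0$'s or handle $V^\Theta_{g_2,n+1}$ by a direct Cauchy--Schwarz-type estimate). In the expansion \eqref{C} of $C_{\mathbf d}$, I pick the unique summand given by $g_1'=g_1$, $g_2'=g_2$, $I=\{2,\dots,m\}$, $J=\{m+1,\dots,m+n\}$, $L=1$, $k_1=d_1$, $k_2=0$. Using $a_1=\pi^2/2$, this summand equals exactly $\pi^2\,[\tau_{d_1}\cdots\tau_{d_m}]^\Theta_{g_1}\,V^\Theta_{g_2,n+1}$, so the $r=1$ case follows with $C(1,n,k)=\pi^{-2}$.

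For the inductive step, assuming the result for $1,\dots,r-1$, I would apply $\mathbf{(\Rmnum{3})}$ once more to the right-hand side and again extract a family of product contributions from $C_{\mathbf d}$, but this time with a splitting $g_1'+g_2'=g_1+g_2$ and partition $I\sqcup J$ chosen so that one factor has complexity parameter $2g_1'+|I|+1$ still $\ge r-1$ (which is possible precisely because the hypothesis $2g_1+m\ge r$ gives enough complexity on the first side) and the other factor carries one more marked point than in the base case. Invoking the inductive hypothesis on the first factor (with $r'=r-1$) then reassembles the bound for $r$, with the additional coefficient sums $\sum a_L(\cdots)$ uniformly bounded thanks to Lemma \ref{Coeff-est}(1)--(3). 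Residual edge cases in which a factor threatens to land on the empty $g=0$ moduli stratum are handled using the bounds \eqref{G11} and \eqref{N11} to trade one marked point or one unit of genus for a bounded constant.

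The main obstacle is orchestrating the iteration so that no $g_1$- or $g_2$-dependent factor accumulates in the final constant $C(r,n,k)$. The naive route---chaining the base case with the bound $V^\Theta_{g_2,n+r}\le C(g_2+n)^{r-1}V^\Theta_{g_2,n+1}$ obtained from iterating \eqref{N1}---loses a factor of $(g_2+n)^{r-1}$, which cannot be absorbed on the right. The resolution is to perform the $r$ extractions in a coordinated pass through $C_{\mathbf d}$, spending the ``budget'' $2g_1+m\ge r$ on transferring marked points onto the $V^\Theta_{g_2,\cdot}$-factor one step at a time in parallel with each growth $V^\Theta_{g_2,n+j}\leadsto V^\Theta_{g_2,n+j+1}$, so that the accumulated combinatorial weight remains bounded by a universal constant depending only on $r$, $n$, and $k$.
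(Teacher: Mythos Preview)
Your base case $r=1$ is clean and correct (for $m\ge 1$): extracting the single summand $L=1$, $k_1=d_1$, $k_2=0$ from $C_{\mathbf d}$ indeed yields the factor $\pi^2[\tau_{d_1}\cdots\tau_{d_m}]^\Theta_{g_1}V^\Theta_{g_2,n+1}$ on the right. However, the inductive step does not go through, and your final paragraph is an acknowledgement of this rather than a fix. The difficulty is structural: one application of $\mathbf{(\Rmnum{3})}$ to the right-hand side can manufacture at most \emph{one} extra marked point on the second factor (turning $V^\Theta_{g_2,\bullet}$ into $V^\Theta_{g_2,\bullet+1}$). To reach $V^\Theta_{g_2,n+r}$ you need $r-1$ further marked points, and by \eqref{N1} each additional point costs a factor comparable to $g_2$. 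Your proposed ``coordinated pass'' would have to cancel these factors against something on the $[\tau_{d_1}\cdots\tau_{d_m}]^\Theta_{g_1}$ side, but the hypothesis $2g_1+m\ge r$ gives you only a \emph{bounded} budget (of size $r$), not one that scales with $g_2$. There is no mechanism in $\mathbf{(\Rmnum{3})}$ by which spending complexity on the $(g_1,m)$ side produces growth of order $g_2$ on the $(g_2,n)$ side.

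The paper's argument inducts on a different parameter: it fixes $r$ and inducts on $2g_1+m$. The base range $r\le 2g_1+m\le 3r$ contains only finitely many pairs $(g_1,m)$, so the ratio $V^\Theta_{g_2,n+r}/V^\Theta_{g_1+g_2,m+n}$ is $O(1)$ by repeated use of \eqref{N11}, \eqref{G11}; combined with Lemma~\ref{e1-est} this handles all large $g_2$ at once, and small $g_2$ by taking a maximum. For $2g_1+m>3r$ the paper expands \emph{both} sides by $\mathbf{(\Rmnum{3})}$ (with the same distinguished index) and observes that every summand of $A_{\mathbf d}$, $B_{\mathbf d}$, $C_{\mathbf d}$ on the left has strictly smaller $2g_1+m$ and is matched by a corresponding summand on the right; the inductive hypothesis then applies termwise with the \emph{same} $r$. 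This term-matching avoids the $g_2$-dependent loss entirely, because the factor $V^\Theta_{g_2,n+r}$ is carried along unchanged through the induction rather than being built up one point at a time.
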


\begin{proof}
From $\mathbf{(\Rmnum{3})}$, we deduce that if $g\leq g^{'}$ and $n\leq n^{'}$, $$[\prod\limits_{i=1}\limits^{n}\tau_{d_i}]_{g}^\Theta\leq[\prod\limits_{i=1}\limits^{n}\tau_{d_i}\tau_0^{n^{'}-n}]_{g^{'}}^\Theta.$$
Then we consider the following two cases.

\noindent\textbf{(Case 1)} If $r\leq 2g_1+m\leq 3r$, then by \eqref{N11} and \eqref{G11}, we have $$\frac{V_{g_2,n+r}}{V_{g_1+g_2,n+m}}=\mathit{O}(1).$$
So by Lemma \ref{e1-est}, there exists two constants $C_{0}(k,n)>0$ and $C(r,n)>0$ such that for any $(g_1,m)$, $(g_2,n)$ with $r\leq 2g_1+m\leq 3r$ and $g_2\geq C(r,n)$, we have
$$[\tau_{d_1}\cdots\tau_{d_m}]_{g_1}^\Theta\times V_{g_2,n+r}=\mathit{O}(V_{g_1+g_2,m+n})\leq C_{0}(k,n)[\tau_{d_1}\cdots\tau_{d_m}\tau_0^{n}]_{g_1+g_2,m+n}^\Theta.$$
Hence, let $$C(r,n,k)=C_{0}(k,n)+\mathrm{max}\{V_{g_2,n+r}\}_{g_2\leq C(r,n)}.$$

\noindent\textbf{(Case 2)} When $2g_1+m> 3r$, we use induction on $2g_1+m$. Suppose the lemma holds for any $(g,n)$ with $2g+n<2g_1+m$ and $2g_1+m> 3r$. Then we use $\mathbf{(\Rmnum{3})}$ to expand both sides of \eqref{Key} and compare them with each other. We write the expansion of both sides as below
$$[\prod\limits_{i=1}\limits^{m}\tau_{d_i}]_{g_1}^\Theta=\sum\limits_{j=2}^m A^j_{\mathbf{d}}+B_{\mathbf{d}}+C_{\mathbf{d}},$$
and $$[\prod\limits_{i=1}\limits^{m}\tau_{d_i}\tau_0^n]_{g_1+g_2}^\Theta=\sum\limits_{j=2}^m A^j_{\mathbf{d^{'}}}+B_{\mathbf{d^{'}}}+C_{\mathbf{d^{'}}},$$
where ${\mathbf{d^{'}}}=(d_1,...,d_m,0,...,0).$ Now we analyze each term of the right hand side of two identities above.

\noindent$\bullet$ \textbf{Contribution from} $\mathbf{A_d}^j$. For $2\leq m$, each term in $A^j_{\mathbf{d}}$ is of the form $a_l\cdot[\tau_{\mathbf{d}(l,j)}]_{g_1}^\Theta$ for $0\leq l\leq g-1-|\mathbf{d}|$, where $\mathbf{d}(l,j)$=$(d_1+d_j+l,d_2,...,\widehat{d_j},...,d_n)$. In this case, the induction hypothesis for $[\tau_{\mathbf{d}(l,j)}]_{g_1}^\Theta$ implies that
$$a_l\cdot[\tau_{\mathbf{d}(l,j)}]_{g_1}^\Theta\times V_{g_2,n+r}^\Theta\leq C(r,n,k)a_l\cdot[\tau_{\mathbf{d}(l,j)}\tau_0^n]_{g_1+g_2}^\Theta.$$
Moreover, all terms of $a_l\cdot[\tau_{\mathbf{d}(l,j)}\tau_0^n]_{g_1+g_2}^\Theta$ appears in the expansion of $A^j_{\mathbf{d^{'}}}$.

\noindent$\bullet$ \textbf{Contribution from} $\mathbf{B_d}$. Similarly, each term in $B_{\mathbf{d}}$ takes the form $a_l\cdot[\tau_{\mathbf{d}(k_1,k_2)}]_{g_1}^\Theta$ for $0\leq l\leq g-1-|\mathbf{d}|$, where $\mathbf{d}(k_1,k_2)$=$(k_1,k_2,d_2,...,d_n)$. The induction hypothesis for $[\tau_{\mathbf{d}(k_1,k_2)}]_{g_1}^\Theta$ implies that 
$$a_l\cdot[\tau_{\mathbf{d}(k_1,k_2)}]_{g_1}^\Theta\times V_{g_2,n+r}^\Theta\leq C(r,n,k)a_l\cdot[\tau_{\mathbf{d}(k_1,k_2)}\tau_0^n]_{g_1+g_2}^\Theta.$$
In this case, all terms of $a_l\cdot[\tau_{\mathbf{d}(k_1,k_2)}\tau_0^n]_{g_1+g_2}^\Theta$ appears in the expansion of $B_{\mathbf{d^{'}}}$.

\noindent$\bullet$ \textbf{Contribution from} $\mathbf{C_d}$. Finally, each term in $C_{\mathbf{d}}$ is of the form $a_l\cdot[\tau_\mathbf{d_1}]^\Theta_{g^{'}}\cdot[\tau_\mathbf{d_2}]_{g_1-g^{'}}^\Theta$ for $0\leq l\leq g-1-|\mathbf{d}|$, where $\mathbf{d_1}=(k_1,\mathbf{d}(I))$ and $\mathbf{d_2}=(k_2,\mathbf{d}(J))$ with $k_1+k_2=l+d_1-1$ as well as $I\sqcup J=\{2,...,n\}$. In this case, we apply the induction hypothesis for $[\tau_\mathbf{d_1}]^\Theta_{g^{'}}$ since $r<2g^{'}+|I|+1<2g_1+m$. Then we have 
$$a_l\cdot[\tau_\mathbf{d_1}]^\Theta_{g^{'}}\cdot[\tau_\mathbf{d_2}]_{g_1-g^{'}}^\Theta\times V_{g_2,n+r}^\Theta\leq C(r,n,k)a_l\cdot[\tau_\mathbf{d_1}\tau_0^n]_{g^{'}+g_2}^\Theta\cdot[\tau_\mathbf{d_2}]_{g_1-g^{'}}^\Theta.$$
Note that all the terms of $a_l\cdot[\tau_\mathbf{d_1}\tau_0^n]_{g^{'}+g_2}^\Theta\cdot[\tau_\mathbf{d_2}]_{g_1-g^{'}}^\Theta$ appears in the expansion of $C_{\mathbf{d^{'}}}$.
\end{proof}

\begin{lemma}\label{Vproduct}
Fix $n_1,n_2,s\geq 0$, then $$\sum_{\substack{g_1+g_2=g\\[3pt]2g_i+n_i\geq s}}V^\Theta_{g_1,n_1}\cdot V_{g_2,n_2}^\Theta=\mathit{O}\left(\frac{V_{g,n_1+n_2}^\Theta}{g^s}\right).$$
\end{lemma}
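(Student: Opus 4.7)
I plan to prove this by induction on $s$, mirroring the Mirzakhani--Zograf strategy for classical Weil--Petersson volumes but adapted to the super setting by exploiting the full strength of Lemma \ref{VproductBasic}.

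For the base case ($s = 0$), each summand is bounded via Lemma \ref{VproductBasic} with $r = 1$ (after first boosting any $V_{g_i, 0}$ factor to $V_{g_i, 1}$ using \eqref{N11}) to get $V_{g_1, n_1} V_{g_2, n_2} \leq C V_{g, n_1+n_2-1}$. Summing over $g_1$, splitting into the regimes $\min(g_1, g_2) \leq g/2$ to control the $1/g_2$ (resp.\ $1/g_1$) factor coming from \eqref{N11}, and using $V_{g, n_1+n_2-1} = O(V_{g, n_1+n_2}/g)$ yields the $O(V_{g, n_1+n_2})$ bound.

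For the inductive step, I would by symmetry restrict to the half $g_1 \leq g/2$ (so $g_2 \geq g/2$). The key refinement is to apply Lemma \ref{VproductBasic} not with a fixed $r$ but with $r = 2g_1 + n_1$, the largest $r$ permitted by the hypothesis $2g_1 + n_1 \geq r$. Proceed in three steps: first, iterate \eqref{N11} to write $V_{g_2, n_2} \leq (c/g_2)^{r - n_2} V_{g_2, r} \leq (2c/g)^{r - n_2} V_{g_2, r}$ (when $r > n_2$; otherwise no boosting is needed); then apply Lemma \ref{VproductBasic} with this $r$ and $n = 0$ to get $V_{g_1, n_1} V_{g_2, r} \leq C(r) V_{g, n_1}$; finally, reconvert $V_{g, n_1} \leq O(V_{g, n_1+n_2}/g^{n_2})$ via another iteration of \eqref{N11}. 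The net per-term estimate is
\[
V_{g_1, n_1} V_{g_2, n_2} \leq C(2g_1 + n_1)\cdot \frac{V_{g, n_1+n_2}}{g^{2g_1 + n_1}}.
\]
Summing over $g_1$ subject to $2g_1 + n_1 \geq s$ produces a geometric-type series in $1/g^{2g_1 + n_1}$, whose dominant contribution comes from the smallest admissible value $g_1 = \lceil (s - n_1)/2 \rceil$, giving exactly $O(V_{g, n_1+n_2}/g^s)$.

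The main obstacle is controlling the constant $C(r) = C(r, n_2, 0)$ in Lemma \ref{VproductBasic} as $r$ varies with $g_1$. Tracing Case~1 in the proof of that lemma, the ratio $V_{g_2, n+r}/V_{g_1+g_2, n+m}$ is bounded by $c^{g_1} \leq c^{3r/2}$ via $g_1$ iterated applications of \eqref{G11}, giving merely exponential (not factorial) growth in $r$, which is enough for the geometric series to converge to $O(1/g^s)$ once $g$ is large enough for the threshold $g_2 \geq C(r, n_2)$ to hold. For the exceptional $g_1$ so large that Case~1 fails, the factor $1/g^{2g_1+n_1}$ decays faster than any polynomial in $g$ and dominates even the crude factorial bound on the constant, so the contribution from this regime is negligible.
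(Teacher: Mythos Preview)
Your approach works but is more laborious than the paper's. You apply Lemma~\ref{VproductBasic} with $r = 2g_1 + n_1$ \emph{varying} over the sum, forcing you to control $C(r,0,0)$ as $r$ grows; the paper keeps $r = s+2$ \emph{fixed}. Concretely, the paper restricts to $g_1 \geq g_2$, uses \eqref{N11} on the large factor to boost it by exactly $s$ marked points (extracting $1/g^s$ up front), writes $V_{g_1,n_1+s}^\Theta = V_{g_1,(n_1-2)+(s+2)}^\Theta$, and invokes Lemma~\ref{VproductBasic} with the \emph{small} factor $V_{g_2,n_2}^\Theta$ in the first slot and $r = s+2$; the summation constraint $2g_2 + n_2 \geq s$ supplies the hypothesis after peeling off one boundary term, each remaining summand is $O(V_{g,n_1+n_2-2}^\Theta)$ with a single constant, and the $O(g)$ such terms are absorbed by one further use of \eqref{N11}. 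Your worry about $C(r)$ is in fact overstated: since you take $k = 0$, the threshold in Case~1 of Lemma~\ref{VproductBasic} coming from Lemma~\ref{e1-est} is vacuous, and iterating \eqref{G1} directly gives $V_{g_2,r}^\Theta/V_{g,n_1}^\Theta \leq (1/4)^{g_1}$ up to a bounded factor, so $C(r,0,0)$ is actually \emph{uniformly bounded} in $r$; the only exponential-in-$r$ contribution is the $(2b_1/\pi)^{r-n_2}$ from your \eqref{N11}-boost, harmless in the geometric tail. Two small remarks: your ``induction on $s$'' never invokes the inductive hypothesis, so the argument is really direct; and the paper's fixed-$r$ trick buys a cleaner proof with no loss.
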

\begin{proof}
Without loss of generality, suppose $n_1\geq n_2$. By \eqref{N11}, we have 
\begin{align*}\sum_{\substack{g_1+g_2=g\\[3pt]2g_i+n_i\geq s}}V^\Theta_{g_1,n_1}\cdot V_{g_2,n_2}^\Theta&=\mathit{O}\left(\sum_{\substack{g_1+g_2=g, g_1\geq g_2\\[3pt]2g_2+n_2\geq s}}V^\Theta_{g_1,n_1}\cdot V_{g_2,n_2}^\Theta\right)\\&=\mathit{O}\left(\sum_{\substack{g_1+g_2=g, g_1\geq g_2\\[3pt]2g_2+n_2\geq s}}\frac{V^\Theta_{g_1,n_1+s}\cdot V_{g_2,n_2}^\Theta}{g^s}\right).\end{align*}

\noindent(\textbf{Case 1}) If $n_2\geq s$, then  \begin{align*}\sum_{\substack{g_1+g_2=g,g_1\geq g_2\\[3pt]2g_2+n_2\geq s}}V^\Theta_{g_1,n_1+s}\cdot V_{g_2,n_2}^\Theta&=V_{g-1,n_1+s}^\Theta\cdot V^\Theta_{1,n_2}+\sum_{\substack{g_1+g_2=g,g_1\geq g_2\geq 2\\[3pt]2g_2+n_2\geq s}}V^\Theta_{g_1,n_1+s}\cdot V_{g_2,n_2}^\Theta.\end{align*}
Since $n_2$ is fixed, then by \eqref{N11} and \eqref{G11}, we get $$V_{g-1,n_1+s}^\Theta\cdot V_{1,n_2}^\Theta=\mathit{O}\left(V_{g-1,n_1+n_2}^\Theta\right)=\mathit{O}\left(V_{g,n_1+n_2}^\Theta \right).$$
By Lemma \ref{VproductBasic} for $\mathbf{d}=(0,...,0)$ and $2+n_2\geq s+2$, we get the following for $i\geq 2$, $$V_{g-i,n_1+s}^\Theta\cdot V_{i,n_2}^\Theta=V_{g-i,n_1-2+s+2}^\Theta\cdot V_{i,n_2}^\Theta=\mathit{O}\left(V_{g,n_1+n_2-2}^\Theta \right).$$
Therefore, \eqref{N11} implies that
\begin{align*}\sum_{\substack{g_1+g_2=g,g_1\geq g_2\\[3pt]2g_2+n_2\geq s}}V^\Theta_{g_1,n_1+s}\cdot V_{g_2,n_2}^\Theta=\mathit{O}\left(V_{g,n_1+n_2}^\Theta +g\cdot V_{g,n_1+n_2-2}^\Theta \right)=\mathit{O}\left(V_{g,n_1+n_2}^\Theta\right). \end{align*}

\noindent(\textbf{Case 2}) If $n_2<s$, then denote $m:=\lceil\frac{s-n_2}{2} \rceil$. Since \begin{align*}\sum_{\substack{g_1+g_2=g,g_1\geq g_2\\[3pt]2g_2+n_2\geq s}}V^\Theta_{g_1,n_1+s}\cdot V_{g_2,n_2}^\Theta&=V_{g-m,n_1+s}^\Theta\cdot V^\Theta_{m,n_2}\\&\qquad+\sum_{\substack{g_1+g_2=g, g_1\geq g_2\geq m+1\\[3pt]2g_2+n_2\geq s}}V^\Theta_{g_1,n_1+s}\cdot V_{g_2,n_2}^\Theta.\end{align*}
With the same argument of Case 1, we have $$V_{g-m,n_1+s}^\Theta\cdot V_{m,n_2}^\Theta=\mathit{O}\left( V_{g-m,n_1+s}^\Theta\right)=\mathit{O}\left(V_{g,n_1+n_2}^\Theta \right),$$
and when $i\geq m+1$, Lemma \ref{VproductBasic}
 yields
$$V_{g-i,n_1+s}^\Theta\cdot V_{i,n_2}^\Theta=V_{g-i,n_1-2+s+2}^\Theta\cdot V_{i,n_2}^\Theta=\mathit{O}\left(V_{g,n_1+n_2-2}^\Theta \right).$$
Then \eqref{N11} implies that
\begin{align*}\sum_{\substack{g_1+g_2=g,g_1\geq g_2\\[3pt]2g_2+n_2\geq s}}V^\Theta_{g_1,n_1+s}\cdot V_{g_2,n_2}^\Theta=\mathit{O}\left(V_{g,n_1+n_2}^\Theta +g\cdot V_{g,n_1+n_2-2}^\Theta \right)=\mathit{O}\left(V_{g,n_1+n_2}^\Theta\right). \end{align*}
So the proof is completed.
\end{proof}

\begin{remark}
(1) The implied constant in Lemma \ref{Vproduct} only depends on $n_1$ and $n_2$.\\
(2) In particular, when $n_1=n_2=1$, we have
$$\sum\limits_{i=r}\limits^{\lfloor g/2 \rfloor}V_{i,1}^\Theta\cdot V_{g-i,1}^\Theta=\mathit{O}\left(\frac{V_g^\Theta}{g^{2r+1}}\right),\ \ \text{as}\ \ g\to\infty.$$
(3) Let $n_1,n_2\geq 0$, we have
$$\sum_{\substack{g_1+g_2=g\\[3pt]g_1\geq g_2\geq 1}}V^\Theta_{g_1,n_1+1}\times V_{g_2,n_2+1}^\Theta=\mathit{O}\left(\frac{V_{g,n}^\Theta}{g}\right),$$
where $n=n_1+n_2$.
\end{remark}

\vskip 20pt
\section{Algorithm for computing coefficients in the expansions}
\setcounter{equation}{0} 
In this section, we give a detailed algorithm which can calculate the coefficients in \eqref{Tauasymp}-\eqref{Gasymp} explicitly up to any given order and then we use it to compute first three terms.

\noindent\textbf{Notation.} If $f$ is a polynomial of in variables $x_1,...,x_n$ with coefficients in a field $\mathbb{F}$, then we denote the coefficient of the monomial $x_1^{\alpha_1}\cdots x_n^{\alpha_n}$ in $f(x_1,...,x_n)$ by
$$[x_1^{\alpha_1}\cdots x_n^{\alpha_n}]f.$$

\subsection{Calculation algorithm.}
In view of Fact 3, if we know the expansions of the ratio $ \frac{4V_{g-1,n+2}^\Theta}{V_{g,n}^\Theta}$ in the inverse powers of $g$ up to $\mathit{O}(1/g^s)$, then the expansions of $\frac{V_{g,n}^\Theta}{4V_{g-1,n+2}^\Theta}$ in the inverse powers of $g$ up to $\mathit{O}(1/g^s)$ is obtained. The reverse is also true. Combining with this, we give an algorithm to calculate the coefficients of \eqref{Tauasymp}-\eqref{Gasymp}. In brief, first, the expansions of the ratios $\frac{4V_{g-1,n+2}^\Theta}{V_{g,n}^\Theta}$, $\frac{\frac{\pi}{2}(2g-2+n)V_{g,n}^\Theta}{V_{g,n+1}^\Theta}$ up to $\mathit{O}(1/g^s)$ can be obtained by the expansions of  $ \frac{4^{|\mathbf{d}|}[\tau_{d_1}\cdots\tau_{d_n}]^\Theta_{g}}{V_{g,n}^\Theta}$ up to $\mathit{O}(1/g^{s})$. Second, the expansions of  $\frac{4^{|\mathbf{d}|}[\tau_{d_1}\cdots\tau_{d_n}]^\Theta_{g}}{V_{g,n}^\Theta}$ up to $\mathit{O}(1/g^{s})$ can be obtained by the expansions $\frac{4^{|\mathbf{d}|}[\tau_{d_1}\cdots\tau_{d_n}]^\Theta_{g}}{V_{g,n}^\Theta}$, $\frac{4V_{g-1,n+2}^\Theta}{V_{g,n}^\Theta}$ and $\frac{\frac{\pi}{2}(2g-2+n)V_{g,n}^\Theta}{V_{g,n+1}^\Theta}$ up to $\mathit{O}(1/g^{s-1})$. We can obtain the coefficients up to any given order by repeating this two steps.

\noindent\textbf{Algorithm.} For any fixed $s,n$, the expansion of $\frac{4V_{g-1,n+2}^\Theta}{V_{g,n}^\Theta}$, $\frac{\frac{\pi}{2}(2g-2+n)V_{g,n}^\Theta}{V_{g,n+1}^\Theta}$ and $ \frac{4^{|\mathbf{d}|}[\tau_{d_1}\cdots\tau_{d_n}]^\Theta_{g}}{V_{g,n}^\Theta}$ in the inverse powers of $g$ up to $\mathit{O}(1/g^s)$ can be obtained in the following four steps:
\begin{itemize}[leftmargin=2em]
\item [1.] In general, given $g^{'},n^{'}$, in order to obtain the expansion of $ \frac{V_{g-g^{'},n-n^{'}}^\Theta}{V_{g,n}^\Theta}$ up to $\mathit{O}(1/g^s)$, it is enough to know the expansions of $\frac{4V_{g-1,k+2}^\Theta}{V_{g,k}^\Theta}$ and $ \frac{\frac{\pi}{2}(2g-2+k)V_{g,k}^\Theta}{V_{g,k+1}^\Theta}$ up to $\mathit{O}(1/g^{s-2g^{'}-n^{'}})$, since
\begin{equation}\label{Vratio2}\begin{aligned}
\frac{V_{g-g^{'},n-n^{'}}^\Theta}{V_{g,n}^\Theta}&=\frac{1}{4^{g^{'}}}\times\prod\limits_{j=-n^{'}+1}\limits^{2g^{'}}\frac{\frac{\pi}{2}(2g-2g^{'}+n+j-3)V_{g-g^{'},n+j-1}^\Theta}{V_{g-g^{'},n+j}^\Theta}\\&\qquad\times\prod\limits_{j=1}\limits^{g^{'}}\frac{4V_{g-j,n+2j}^\Theta}{V_{g-j+1,n+2j-2}^\Theta}\times\prod\limits_{j=-n^{'}+1}\limits^{2g^{'}}\frac{1}{\frac{\pi}{2}(2g-2g^{'}+n+j-3)}.
\end{aligned}
\end{equation}

\item [2.] Following recursion $(\mathbf{\Rmnum{2}})$, we have 
\begin{equation}\label{Nre}
\frac{\frac{\pi}{2}(2g-2+n)V_{g,n}^\Theta}{V_{g,n+1}^\Theta}=\sum\limits_{L=0}\limits^{g-1}\frac{(-1)^L(\pi/2)^{2L+1}}{(2L+1)!}\times\frac{4^L[\tau_{L}\tau_0^{n}]^\Theta_g}{V_{g,n+1}^\Theta}.
\end{equation}
So in view of Fact 1, the expansion of $\frac{\frac{\pi}{2}(2g-2+n)V_{g,n}^\Theta}{V_{g,n+1}^\Theta}$ up to $\mathit{O}(1/g^s)$ can be written explicitly in terms of the expansion of $ \frac{4^{k}[\tau_{k}\tau_0^n]^\Theta_{g}}{V_{g,n+1}^\Theta}$ up to $\mathit{O}(1/g^s)$.

\item [3.]Similarly, recursion $(\mathbf{\Rmnum{1}})$ implies
\begin{equation}\label{Gre}
\frac{4V_{g-1,n+4}^\Theta}{V_{g,n+2}^\Theta}=\frac{4[\tau_1\tau_0^{n+1}]_g^\Theta}{V_{g,n+2}^\Theta}-\frac{24}{V_{g,n+2}^\Theta}\sum_{\substack{g_1+g_2=g\\[3pt]I\sqcup J=\{1,...,n\}}}V^{\Theta}_{g_1,|I|+2}\times V^{\Theta}_{g_2,|J|+2}.
\end{equation}
Then by Lemma \ref{Vproduct}, the expansion of $\frac{4V_{g-1,n+4}^\Theta}{V_{g,n+2}^\Theta}$ up to $\mathit{O}(1/g^s)$ can be written explicitly in terms of the expansion of $ \frac{4[\tau_1\tau_0^{n+1}]^\Theta_{g}}{V_{g,n+1}^\Theta}$ up to $\mathit{O}(1/g^s)$ and the expansion of $$\frac{1}{V^\Theta_{g,n+2}}\sum_{\substack{g_1+g_2=g\\[3pt]I\sqcup J=\{1,...,n\}}}V^\Theta_{g_1,|I|+2}\times V_{g_2,|J|+2}^\Theta$$ up to $\mathit{O}(1/g^s)$ with either $ 2g_1+|I|<s$ or $ 2g_2+|J|<s$. Then we use \eqref{Vratio2} to get the expansions of $\frac{4V_{g-1,3}^\Theta}{V_{g,1}^\Theta}$ and $\frac{4V_{g-1,2}^\Theta}{V_{g,0}^\Theta}$.

\item [4.] The expansion of $\frac{4^{|\mathbf{d}|}[\tau_{d_1}\cdots\tau_{d_n}]^\Theta_{g}}{V_{g,n}^\Theta}$ up to $\mathit{O}(1/g^s)$ is explicitly obtained by the expansion of $\frac{4^{|\mathbf{k}|}\left([\tau_{k_1}\tau_{k_2}\cdots\tau_{k_n}]^\Theta_{g}-4[\tau_{k_1+1}\tau_{k_2}\cdots\tau_{k_n}]_{g}^\Theta\right)}{V_{g,n}^\Theta}$ up to $\mathit{O}(1/g^s)$, since \begin{equation}\label{summation}1-\frac{4^{|\mathbf{d}|}[\tau_{d_1}\cdots\tau_{d_n}]^\Theta_{g}}{V_{g,n}^\Theta}=\frac{\sum\limits_{|\mathbf{k}|=0}\limits^{|\mathbf{d}|-1}4^{|\mathbf{k}|}\left([\tau_{k_1}\tau_{k_2}\cdots\tau_{k_n}]^\Theta_{g}-4[\tau_{k_1+1}\tau_{k_2}\cdots\tau_{k_n}]\right)}{V_{g,n}^\Theta},\end{equation} 
(the right hand side of \eqref{summation} is an increasing summation from $(0,...,0)$ to $(d_1,...,d_n)$ in a lexicographic order). Recursion $(\mathbf{\Rmnum{3}})$ implies
\begin{equation}\label{difference}\frac{4^{|\mathbf{k}|}\left([\tau_{k_1}\tau_{k_2}\cdots\tau_{k_n}]^\Theta_{g}-4[\tau_{k_1+1}\tau_{k_2}\cdots\tau_{k_n}]_{g}^\Theta\right)}{V_{g,n}^\Theta}=V_1+V_2+V_3,\end{equation}
where \begin{equation}\label{V1}V_1=\frac{1}{\frac{\pi}{2}(2g-3+n)}\cdot\frac{\frac{\pi}{2}(2g-3+n)V_{g,n-1}^\Theta}{V_{g,n}^\Theta}\cdot\frac{4^{|\mathbf{k}|}A_{\mathbf{k},g,n}}{V_{g,n-1}^\Theta},\end{equation}
\begin{equation}\label{V2}V_2=\frac{1}{\frac{\pi}{2}(2g-3+n)}\cdot\frac{\frac{\pi}{2}(2g-3+n)V_{g-1,n+1}^\Theta}{V_{g-1,n+2}^\Theta}\cdot\frac{4V_{g-1,n+2}^\Theta}{V_{g,n}^\Theta}\cdot\frac{4^{|\mathbf{k}|-1}B_{\mathbf{k},g,n}}{V_{g-1,n+1}^\Theta},\end{equation}
\begin{equation}\label{V3}V_3=\sum_{\substack{1\leq g^{'}\leq g-1\\[3pt]I\sqcup J=\{2,...,n\}}}\frac{ V_{g^{'},|I|+1}^\Theta\cdot V_{g-g^{'},|J|+1}^\Theta}{V_{g,n}^\Theta}\cdot\frac{4^{|\mathbf{k}|}C_{\mathbf{k},g,n}}{V_{g^{'},|I|+1}^\Theta\cdot V_{g-g^{'},|J|+1}^\Theta},\end{equation}
and
\begin{equation}\label{AV}
\frac{4^{|\mathbf{k}|}A_{\mathbf{k},g,n}}{V_{g,n-1}^\Theta}=\sum\limits_{j=2}\limits^{n}(2k_j+1)\sum\limits_{L=0}\limits^{g-1-|\mathbf{k}|}\left(\frac{a_{L}}{4^{L}}-\frac{a_{L-1}}{4^{L-1}}\right)\frac{4^{|\mathbf{k}|+L}[\tau_{k_1+k_j+L}\tau_{k_2}\cdots\widehat{\tau_{k_j}}\cdots\tau_{k_n}]_{g}^\Theta}{V_{g,n-1}^\Theta}
\end{equation}
(here the hat means that the corresponding entry is empty and $a_{-1}=0$),
\begin{equation}\label{BV}
\frac{4^{|\mathbf{k}|-1}B_{\mathbf{k},g,n}}{V_{g-1,n+1}^\Theta}=2\left(\sum\limits_{L=0}\limits^{g-1-|\mathbf{k}|}\Big(\frac{a_{L}}{4^{L}}-\frac{a_{L-1}}{4^{L-1}}\Big)\frac{\sum_{c_1+c_2=k_1+L-1}4^{|\mathbf{k}|+L-1}[\tau_{c_1}\tau_{c_2}\tau_{k_3}\cdots\tau_{k_n}]_{g}^\Theta}{V_{g-1,n+1}^\Theta}\right),
\end{equation}

\begin{equation}\label{CV}\begin{aligned}
&\frac{4^{|\mathbf{k}|}C_{\mathbf{k},g,n}}{V_{g_1,|I|+1}^\Theta\cdot V_{g_2,|J|+1}^\Theta}=2\sum\limits_{L=0}\limits^{g-1-|\mathbf{k}|}\Big(\frac{a_{L}}{4^{L}}-\frac{a_{L-1}}{4^{L-1}}\Big)\\&\times\left(\frac{\sum\limits_{c_1+c_2=L+k_1-1}4^{L+|\mathbf{k}|-1}[\tau_{c_1}\prod\limits_{i\in I}\tau_{k_i}]_{g^{'}}^\Theta\cdot[\tau_{c_2}\prod\limits_{j\in J}\tau_{k_j}]_{g-g^{'}}^\Theta}{V_{g^{'},|I|+1}^\Theta\cdot V_{g-g^{'},|J|+1}^\Theta}\right).\end{aligned}
\end{equation}
From \eqref{summation} and \eqref{difference}, the expansion of $\frac{4^{|\mathbf{d}|}[\tau_{d_1}\cdots\tau_{d_n}]^\Theta_{g}}{V_{g,n}^\Theta}$ up to $\mathit{O}(1/g^s)$ can be derived from three parts $V_1-V_3$ up to $\mathit{O}(1/g^s)$. Now we analyze the contributions from \eqref{V1}-\eqref{V3} one by one.

\noindent$\bullet$\textbf{Contribution from $V_1$.} By \eqref{V1}, the expansion of $V_1$ up to $\mathit{O}(1/g^s)$ can be obtained by the expansion of $\frac{\frac{\pi}{2}(2g-3+n)V_{g,n-1}^\Theta}{V_{g,n}^\Theta}$ and $\frac{4^{|\mathbf{k}|}A_{\mathbf{k},g,n}}{V_{g,n-1}^\Theta}$ up to $\mathit{O}(1/g^{s-1})$. The latter can be obtained by the expansions for $\frac{4^{|\mathbf{d}|}[\tau_{d_1}\cdots\tau_{d_n}]^\Theta_{g}}{V_{g,n}^\Theta}$ up to $\mathit{O}(1/g^{s-1})$ via \eqref{AV} in view of Fact 1.

\noindent$\bullet$\textbf{Contribution from $V_2$.} Similarly, the expansion of $V_2$ up to $\mathit{O}(1/g^s)$ can be obtained by the expansion of $\frac{\frac{\pi}{2}(2g-3+n)V_{g-1,n+1}^\Theta}{V_{g-1,n+2}^\Theta}$, $\frac{4V_{g-1,n+2}^\Theta}{V_{g,n}^\Theta}$ and $\frac{4^{|\mathbf{k}|-1}B_{\mathbf{k},g,n}}{V_{g-1,n+1}^\Theta}$ up to $\mathit{O}(1/g^{s-1})$. The last one can be obtained by the expansions for $\frac{4^{|\mathbf{d}|}[\tau_{d_1}\cdots\tau_{d_n}]^\Theta_{g}}{V_{g,n}^\Theta}$ up to $\mathit{O}(1/g^{s-1})$ via \eqref{BV} in view of Fact 1.

\noindent$\bullet$\textbf{Contribution from $V_3$.} First, Lemma \ref{Vproduct} implies that 
$$\frac{\sum_{\substack{1\leq g^{'}\leq g-1\\[3pt]I\sqcup J=\{2,...,n\}}} V_{g^{'},|I|+1}^\Theta\cdot V_{g-g^{'},|J|+1}^\Theta}{V_{g,n}^\Theta}$$
is of order at least $\mathit{O}(1/g^{2})$ and it is of order $\mathit{O}(1/g^{s+1})$ unless either $2g^{'}+|I|\leq s$ or $2(g-g^{'})+|J|\leq s$. So in order to get the expansions of $V_3$ up to $\mathit{O}(1/g^{s})$, we need to apply the expansions of $\frac{4V_{g-1,k+2}^\Theta}{V_{g,k}^\Theta}$ and $ \frac{\frac{\pi}{2}(2g-2+k)V_{g,k}^\Theta}{V_{g,k+1}^\Theta}$ up to $\mathit{O}(1/g^{s-2g^{'}-n^{'}})$ in \eqref{Vratio2} and we also need to apply the expansions for $\frac{4^{|\mathbf{d}|}[\tau_{d_1}\cdots\tau_{d_n}]^\Theta_{g}}{V_{g,n}^\Theta}$ up to $\mathit{O}(1/g^{s-2})$ in \eqref{CV} with the help of Fact 1 (cf. Appendix A).
\end{itemize}

\subsection{First three terms} Now we use the algorithm above to compute first three terms in \eqref{Tauasymp}-\eqref{Gasymp}. 
\begin{theorem} As $g\to\infty$,\\
(1). for any fixed $n\geq 1$ and $\mathbf{d}=(d_1,...,d_n)$, we have
\begin{equation}\label{tauasymp2}
\frac{4^{k}[\prod\limits_{i=1}\limits^{n}\tau_{d_i}]_g^\Theta}{V_{g,n}^\Theta}=1+\frac{e^1_{n,\mathbf{d}}}{g}+\frac{e^2_{n,\mathbf{d}}}{g^2}+\mathit{O}\left(\frac{1}{g^{3}}\right),
\end{equation}where $$e^1_{n,\mathbf{d}}=-\frac{4|\mathbf{d}|(|\mathbf{d}|+n-3/2)}{\pi^2},$$ and \begin{align*}e^2_{n,\mathbf{d}}&=\frac{1}{\pi^4}\times\Bigg[8|\mathbf{d}|^4+(16n-40)|\mathbf{d}|^3+\Big(8n^2+(2\pi^2-48)n-6\pi^2+62\Big)|\mathbf{d}|^2\\&\qquad+\Big((2\pi^2-12)n^2-(9\pi^2-44)n-38+9\pi^2-\frac{\pi}{4}\Big)|\mathbf{d}|\Bigg]+\frac{n-s}{16\pi^2},\end{align*}
where $s:=\#\{i|d_i=0\}$ denotes the number of zero in $\mathbf{d}$.

(2). for any fixed $n\geq 0$, the following holds
\begin{equation}\label{Nasymp2}
\frac{\frac{\pi}{2}(2g-2+n)V^\Theta_{g,n}}{V^\Theta_{g,n+1}}=1+\frac{a^1_{n}}{g}+\frac{a^2_{n}}{g^2}+\mathit{O}\left(\frac{1}{g^{3}}\right),
\end{equation}
and
\begin{equation}\label{Gasymp2}
\frac{4V^\Theta_{g-1,n+2}}{V^\Theta_{g,n}}=1+\frac{b^1_{n}}{g}+\frac{b^2_{n}}{g^2}+\mathit{O}\left(\frac{1}{g^2}\right),
\end{equation}
where $$ a^1_{n}=\frac{8n-8+\pi^2}{4\pi^2},\qquad b^1_{n}=-\frac{4n-2}{\pi^2},$$ and $$a^2_{n}=\frac{1}{\pi^4}\left[\Big(-\frac{3\pi^2}{2}+8 \Big)n^2-\Big(\frac{\pi^4}{8}-5\pi^2+20\Big)n+\frac{17\pi^4}{64}-\frac{\pi^3}{16}-\frac{27\pi^2}{8}+\frac{\pi}{4}+12 \right],$$ $$b^2_n=\frac{1}{\pi^4}\left[(2\pi^2-4)n^2-(7\pi^2-12)n+\frac{13\pi^2}{8}-\frac{\pi}{2}-8\right].$$
\end{theorem}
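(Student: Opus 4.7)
The plan is to execute the algorithm of Section 3.1 twice, first to extract the $1/g$ terms and then once more for the $1/g^2$ terms. The algorithm dictates the order of computation $e^1_{n,\mathbf{d}}\to (a^1_n,b^1_n)\to e^2_{n,\mathbf{d}}\to (a^2_n,b^2_n)$, since each step requires only data strictly lower in this hierarchy. The base of the bootstrap is supplied by Lemmas \ref{e1-est} and \ref{Basic-est}: each of the three ratios appearing in \eqref{tauasymp2}--\eqref{Gasymp2} equals $1+\mathit{O}(1/g)$, while Lemma \ref{Vproduct} ensures the join term in $V_3$ is already $\mathit{O}(1/g^2)$ at this stage.

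\noindent\textit{Order $1/g$.} To find $e^1_{n,\mathbf{d}}$, combine \eqref{summation} and \eqref{difference}. Only $V_1$ and $V_2$ contribute at this order. The $L$-sums in \eqref{AV}--\eqref{BV} collapse via parts (2) and (3) of Lemma \ref{Coeff-est} to $4/\pi$ and $1/\pi$, respectively; the leading factors of the volume-ratio prefactors are supplied by the leading $1+\mathit{O}(1/g)$ information through \eqref{Vratio2}. The telescoping sum \eqref{summation} along the lexicographic path from $(0,\ldots,0)$ to $\mathbf{d}$ then converts the one-step increments into a closed form $-4|\mathbf{d}|(|\mathbf{d}|+n-3/2)/\pi^2$. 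Plugging this back into \eqref{Nre} and performing the $L$-sum (essentially a $\sinh$-type power series) gives $a^1_n$; the same substitution in \eqref{Gre}, with the join correction still negligible, yields $b^1_n$.

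\noindent\textit{Order $1/g^2$.} We now iterate the same loop. Three new ingredients enter the inputs to $V_1$ and $V_2$: (i) the inner intersection-number ratios now carry their $e^1$-correction, (ii) the volume ratios supplied by \eqref{Vratio2} now carry $a^1$- and $b^1$-corrections, and (iii) the second moment $\sum_{L\geq 0}L^2(a_L/4^L-a_{L-1}/4^{L-1})$ appears, which is rational-times-$\pi^{-1}$ by Lemma \ref{Coeff-est}(4). The join term $V_3$ now contributes at exact order $1/g^2$; by Lemma \ref{Vproduct} only small-genus endpoint pairs survive, and these are evaluated directly from the initial data $[\tau_0]^\Theta_1=1/8$ and $[\,]^\Theta_2=3\pi^2/64$ together with \eqref{Vratio2}. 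Telescoping in $\mathbf{k}$ as before delivers $e^2_{n,\mathbf{d}}$, and finally $a^2_n,b^2_n$ drop out by inserting the expansions of $4^L[\tau_L\tau_0^n]^\Theta_g/V^\Theta_{g,n+1}$ and $4[\tau_1\tau_0^{n+1}]^\Theta_g/V^\Theta_{g,n+2}$ into \eqref{Nre} and \eqref{Gre} and collecting the $1/g^2$ coefficient.

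\noindent\textit{Main obstacle.} The principal difficulty is the second-order bookkeeping, where three sources of $1/g$ corrections compound nonlinearly and must be tracked coherently across $V_1$, $V_2$, and $V_3$. The most delicate single feature is the appearance of the multiplicity $s=\#\{i:d_i=0\}$ in $e^2_{n,\mathbf{d}}$: one has to isolate the $L=0,\,k_j=0$ sub-case of $A^j_{\mathbf{k}}$ in \eqref{AV}, where the prefactor $(2k_j+1)=1$ survives the telescoping rather than collapsing into a monomial in $|\mathbf{d}|$, producing exactly the $(n-s)/(16\pi^2)$ term. A secondary but tedious point is the symmetric treatment of both small-genus endpoints in the join contribution to \eqref{Gre}, which must be handled explicitly rather than by a uniform bound.
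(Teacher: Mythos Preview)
Your overall plan is correct and is exactly the route the paper takes: run the Section~3.1 algorithm twice, feeding the lower-order data $e^i,a^i,b^i$ ($i<s$) into \eqref{V1}--\eqref{V3} via \eqref{Vratio2}, \eqref{AV}--\eqref{CV}, and then extract $a^s_n,b^s_n$ from \eqref{Nre}--\eqref{Gre}. The use of Lemma~\ref{Coeff-est}(2)--(4) for the $L$-moments, Lemma~\ref{Vproduct} to localize the join, and the telescoping identity \eqref{summation} all match the paper. (Two minor slips: the series in \eqref{Nre} is $\sin$-type, not $\sinh$-type, because of the $(-1)^L$; and at order $1/g^2$ only the $(g',I)=(1,\emptyset)$ endpoints survive in $V_3$, so $[\,]^\Theta_2$ is never used.)

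There is, however, one concrete error that would derail the second-order computation if executed as written. You attribute the $(n-s)/(16\pi^2)$ term in $e^2_{n,\mathbf{d}}$ to the ``$L=0,\,k_j=0$ sub-case of $A^j_{\mathbf{k}}$'' in \eqref{AV}. This cannot be right: the $e^1$-correction to the inner intersection number in \eqref{AV} depends only on $|\mathbf{d}(L,j)|=|\mathbf{k}|+L$, so $[1/g^2]V_1$ is a genuine polynomial in $|\mathbf{d}|,k,n$ with no $s$-dependence (this is exactly \eqref{V12} in the paper). The $s$-dependence comes instead from $V_3$. When the incremented index $k_1$ equals $0$, the constraint $c_1+c_2=L+k_1-1\geq 0$ in \eqref{CV} kills the $L=0$ summand, because the surviving small-genus factor $[\tau_{c_1}]^\Theta_1$ forces $c_1=0$. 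This produces the $-\delta_{k,0}/(8\pi^2)$ in \eqref{V32}; summing along the lexicographic path via \eqref{summation} picks up one such term for each coordinate that is moved off zero, giving the count $n-s$. So the ``most delicate single feature'' you flag is real, but it lives in the join term $C$, not in $A$.
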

The proof of Theorem 3.1 is divided into three parts and relies on the following two remarks.

\begin{remark}
$\displaystyle \sum\limits_{L=0}\limits^{\infty}(-1)^L\frac{(\pi/2)^{2L+1}L^k}{(2L+1)!}$ is a polynomial in $\pi^2$ of degree at most $\lfloor k/2 \rfloor$ with rational coefficients, since
$$\sum\limits_{L=0}\limits^{\infty}(-1)^L\frac{(\pi/2)^{2L+1}L^k}{(2L+1)!}=\left[xD^k\Big(\frac{\mathrm{sin}x}{x}\Big)\right]_{x=\frac{\pi}{2}},$$
where $\displaystyle D=\frac{x}{2}\cdot\frac{d}{dx}.$ For example, when $k=1$, we have $$\sum\limits_{L=0}\limits^{\infty}(-1)^L\frac{(\pi/2)^{2L+1}L}{(2L+1)!}=-\frac{1}{2},$$
and when $k=2$, $$\sum\limits_{L=0}\limits^{\infty}(-1)^L\frac{(\pi/2)^{2L+1}L^2}{(2L+1)!}=-\frac{\pi^2}{16}+\frac{1}{4}.$$
\end{remark}

\begin{remark}
From the proof of part (4) in the Lemma \ref{Basic-est}, we have \begin{equation}\label{j2}\begin{aligned}\sum\limits_{i=0}\limits^\infty(i+1)^2 \bigg(\frac{a_{i+1}}{4^{i+1}}-\frac{a_i}{4^i}\bigg)&=\frac{1}{\pi}\int_{0}^\infty \frac{e^{-x/2}}{(\mathrm{cosh}\frac{x}{2})^2}\Big(D^2(\mathrm{cosh}\frac{x}{2}-1)+F^2(\mathrm{sinh}\frac{x}{2})\Big)\\&=\frac{1}{\pi}\times\Big(\frac{1}{2}+\frac{\pi^2}{12}\Big),\end{aligned}\end{equation} and
\begin{equation}\label{j3}\begin{aligned}\sum\limits_{i=0}\limits^\infty(i+1)^3 \bigg(\frac{a_{i+1}}{4^{i+1}}-\frac{a_i}{4^i}\bigg)&=\frac{1}{\pi}\int_{0}^\infty \frac{e^{-x/2}}{(\mathrm{cosh}\frac{x}{2})^2}\Big(D^3(\mathrm{cosh}\frac{x}{2}-1)+F^3(\mathrm{sinh}\frac{x}{2})\Big)\\&=\frac{1}{\pi}\times\Big(\frac{1}{4}+\frac{3\pi^2}{16}\Big).\end{aligned}\end{equation}
\end{remark}

\subsubsection{Constant term} Lemma \ref{e1-est} immediately implies that $e^0_{n,\mathbf{d}}$ equals 1. Then we prove the constant terms in $a_n^0$ and $b_n^0$ in \eqref{Nasymp2} and \eqref{Gasymp2} respectively via the second and the third step of Algorithm (cf. Section 3.1).

\noindent\textbf{Proof.}
In view of Fact 1 (cf. Appendix A), let $$r_i=(-1)^i\frac{(\pi/2)^{2i+1}}{(2i+1)!}, k_g=g-1, c_i=1\ \text{and}\ c_{g,i}=\frac{4^{i+1}[\tau_{i+1}\tau_0^{n}]_g^\Theta}{V_{g,n+1}^\Theta}.$$
Then by \eqref{Fact1} and Lemma \ref{e1-est}, we get the following from \eqref{Nre} 
$$a_n^0=\lim\limits_{g\to\infty}\frac{\frac{\pi}{2}(2g-2+n)V_{g,n}^\Theta}{V_{g,n+1}^\Theta}=\sum\limits_{i=0}^{\infty}(-1)^i\frac{(\pi/2)^{2i+1}}{(2i+1)!}=1.$$ 

On the other hand, as $g\to\infty$, by Lemma \ref{e1-est} together with \eqref{N11} and \eqref{Times1}, we obtain the following via \eqref{Gre} for $n\geq 0$,
$$\frac{4V_{g-1,n+4}^\Theta}{V_{g,n+2}^\Theta}=\frac{4[\tau_{1}\tau_0^{n+1}]_{g-1}^\Theta}{V_{g,n+2}^\Theta}+\mathit{O}\left(\frac{1}{g^2}\right)=1+\mathit{O}\left(\frac{1}{g^2}\right).$$
So $b_n^0$ in \eqref{Gasymp2} coincides with 1 for $n\geq 2$. Now by $a_n^0$ in \eqref{Nasymp} equals $1$ for $n\geq 0$, it is easy to check the $b_n^0$ in \eqref{Gasymp2} for $n=0,1$ by \eqref{Vratio2},
\begin{equation}\label{n1}\frac{4V_{g-1,3}^\Theta}{V_{g,1}^\Theta}=\frac{\frac{\pi}{2}(2g-1)V_{g-1,3}^\Theta}{V_{g-1,4}^\Theta}\cdot\frac{4V_{g-1,4}^\Theta}{V_{g,2}^\Theta}\cdot\frac{V_{g,2}^\Theta}{\frac{\pi}{2}(2g-1)V_{g,1}^\Theta},\end{equation}
and \begin{equation}\label{n0}\frac{4V_{g-1,2}^\Theta}{V_{g,0}^\Theta}=\frac{\frac{\pi}{2}(2g-2)V_{g-1,2}^\Theta}{V_{g-1,3}^\Theta}\cdot\frac{4V_{g-1,3}^\Theta}{V_{g,1}^\Theta}\cdot\frac{V_{g,1}^\Theta}{\frac{\pi}{2}(2g-1)V_{g,0}^\Theta}.\end{equation}
So $a_n^0$ and $b_n^0$ in \eqref{Nasymp2}-\eqref{Gasymp2} both coincide with 1. \qed

\subsubsection{Coefficients of $\frac{1}{g}$.}Now from constant term $e^0_{n,\mathbf{d}}$, $a^0_{n}$ and $b^0_{n}$ in \eqref{tauasymp2}-\eqref{Gasymp2}, we calculate $e^1_{n,\mathbf{d}}$, $a^1_{n}$ and $b^1_{n}$ in \eqref{tauasymp2}-\eqref{Gasymp2}, respectively via the Algorithm in Section 3.1. First, we remark the numerical properties which will be used in the following proof.

\noindent\textbf{Proof.}
First, we prove \begin{equation}\label{e1}e^1_{n,\mathbf{d}}=-\frac{4|\mathbf{d}|(|\mathbf{d}|+n-3/2)}{\pi^2}.\end{equation} In view of the fourth step in Algorithm (cf. Section 3.1), it is enough to evaluate contributions from $V_1$, $V_2$ and $V_3$. There are two cases in \eqref{difference}. Note that the contribution from $V_3$ in the fourth step of Algorithm shows 
$$\Big[\frac{1}{g}\Big]V_3=0.$$

\noindent\textbf{(Case 1)} For any $\mathbf{d}=(d_1,...,d_s,0,...,0)$ with $1\leq s\leq n-1$, we compute $$\frac{4^{|\mathbf{d}|}\left([\tau_{d_1}\cdots\tau_{d_s}\tau_0^{n-s}]^\Theta_{g}-4[\tau_{d_1}\cdots\tau_{d_s}\tau_1\tau_0^{n-s-1}]^\Theta_{g}\right)}{V_{g,n}^\Theta}$$

From \eqref{V1}-\eqref{V2}, then by Lemma \ref{e1-est}, Fact 1 (cf. Appendix A) and Lemma \ref{Coeff-est} (2)-(3), we get the following as $g\to\infty$,
\begin{align*}\Big[\frac{1}{g}\Big]V_1&=\frac{1}{\pi}\left[\sum\limits_{j=2}\limits^{n}(2d_j+1)\sum\limits_{L=0}\limits^{\infty}\Big(\frac{a_{L}}{4^{L}}-\frac{a_{L-1}}{4^{L-1}}\Big)+(n-1-s)\sum\limits_{L=0}\limits^{\infty}\Big(\frac{a_{L}}{4^{L}}-\frac{a_{L-1}}{4^{L-1}}\Big)\right]\\&=\frac{8|\mathbf{d}|+4n-4}{\pi^2}\end{align*}
and
\begin{align*}\Big[\frac{1}{g}\Big]V_2=\frac{1}{\pi}\left[2\sum\limits_{L=0}\limits^{\infty}L\Big(\frac{a_{L}}{4^{L}}-\frac{a_{L-1}}{4^{L-1}}\Big)\right]=\frac{2}{\pi}.\end{align*}

\noindent\textbf{(Case 2)} For any $\mathbf{d}=(d_1,...,d_s,k,...,0)$ with $1\leq s\leq n-1$ and $k\geq 1$, we compute $$\left[\frac{1}{g}\right]\left(\frac{4^{|\mathbf{d}|}\left([\tau_{d_1}\cdots\tau_{d_s}\tau_{k}\tau_0^{n-s-1}]^\Theta_{g}-4[\tau_{d_1}\cdots\tau_{d_s}\tau_{k+1}\tau_0^{n-s-1}]^\Theta_{g}\right)}{V_{g,n}^\Theta}\right)$$
With the same argument in Case 1, we get the following as $g\to\infty$,
\begin{align*}\Big[\frac{1}{g}\Big]V_1&=\frac{1}{\pi}\left[\sum\limits_{j=2}\limits^{n}(2d_j+1)\sum\limits_{L=0}\limits^{\infty}\Big(\frac{a_{L}}{4^{L}}-\frac{a_{L-1}}{4^{L-1}}\Big)+(n-1-s)\sum\limits_{L=0}\limits^{\infty}\Big(\frac{a_{L}}{4^{L}}-\frac{a_{L-1}}{4^{L-1}}\Big)\right]\\&=\frac{8|\mathbf{d}|-8k+4n-4}{\pi^2}\end{align*}
and
\begin{align*}\Big[\frac{1}{g}\Big]V_2=\frac{1}{\pi}\left[2\sum\limits_{L=0}\limits^{\infty}(L+k)\Big(\frac{a_{L}}{4^{L}}-\frac{a_{L-1}}{4^{L-1}}\Big)\right]=\frac{8k+2}{\pi}.\end{align*}

Hence, as $g\to\infty$, we get
$$\Big[\frac{1}{g}\Big]V_1=\frac{4\left(n-1+2|\mathbf{d}|-2d_1\right)}{\pi^2},\qquad\Big[\frac{1}{g}\Big]V_2=\frac{8d_1+2}{\pi^2},\qquad \Big[\frac{1}{g}\Big]V_3=0.$$
Then, as $g\to\infty$,$$\left[\frac{1}{g}\right]\left(\frac{4^{|\mathbf{d}|}\left([\tau_{d_1}\prod\limits_{i=2}\limits^{n}d_i]_g^\Theta-4[\tau_{d_1+1}\prod\limits_{i=2}\limits^{n}d_i]_g^\Theta\right)}{V_{g,n}^\Theta}\right)=\frac{4n-2+8|\mathbf{d}|}{\pi^2}.$$
So \eqref{e1} follows by \eqref{summation}.

Now we calculate $a_n^1$ and $b_n^1$ in \eqref{Nasymp2} and \eqref{Gasymp2} respectively via the second and the third step of Algorithm (cf. Section 3.1). By part \eqref{e1}, Fact 1 (cf. Appendix A) and Remark 3.2, \eqref{Nre} implies that as $g\to\infty$,
\begin{align*}
\left[\frac{1}{g}\right]\left(\frac{\frac{\pi}{2}(2g-2+n)V^\Theta_{g,n}}{V^\Theta_{g,n+1}}\right)&=\sum\limits_{L=0}\limits^{\infty}\frac{(-1)^L(\pi/2)^{2L+1}}{(2L+1)!}\times\left[\frac{1}{g}\right]\left(\frac{4^L[\tau_{L}\tau_0^{n}]^\Theta_g}{V_{g,n+1}^\Theta} \right)\\&=-\sum\limits_{L=0}\limits^{\infty}\frac{(-1)^L(\pi/2)^{2L+1}}{(2L+1)!}\cdot\frac{4L^2+4(n-\frac{1}{2})L}{\pi^2}\\&=\frac{8n-8+\pi^2}{4\pi^2}.
\end{align*}
Similarly, we obtain $b_n^1$ for $n\geq 2$. Note that \eqref{e1} shows as $g\to\infty$,
$$\frac{4[\tau_{1}\tau_0^{n+1}]^\Theta_g}{V_{g,n+2}^\Theta}=1-\frac{4n+6}{\pi^2}\times\frac{1}{g}+\mathit{O}\left(\frac{1}{g^2}\right).$$
By \eqref{N11} and Lemma \ref{Vproduct} for $g_1,g_2\geq 1$ and fixed $n$, $$\sum_{\substack{g_1+g_2=g\\[3pt]I\sqcup J=\{1,...,n\}}}V_{g_1,|I|+2}^\Theta\times V_{g_2,|J|+2}^\Theta=\mathit{O}\left(\frac{V_{g,n+4}^\Theta}{g^4}\right)=\mathit{O}\left(\frac{V_{g,n+2}^\Theta}{g^2}\right).$$
So by \eqref{Gre}, we obtain $$\frac{4V^\Theta_{g-1,n+4}}{V^\Theta_{g,n+2}}=1-\frac{4n+6}{\pi^2}\times\frac{1}{g}+\mathit{O}\left(\frac{1}{g^2}\right),$$ as $g\to\infty$. 
The remaining cases $b_n^1$ for $n=0$ and $n=1$ can be verified from $a_n^1$ for any $n$ and $b_n^1$ for $n\geq 2$ via \eqref{n1} and \eqref{n0}. \qed

\subsubsection{Coefficients of $\frac{1}{g^2}$.} From $e^i_{n,\mathbf{d}}$, $a^i_{n}$ and $b^i_{n}$ for $i=0,1$ in \eqref{tauasymp2}-\eqref{Gasymp2}, we can get the terms of order 2 via the Algorithm in Section 3.1 to complete the proof of Theorem 3.1.

\noindent\textbf{Proof.} First, we get $e^2_{n,\mathbf{d}}$ by analyzing the contributions from $V_1$-$V_3$ in \eqref{V1}-\eqref{V3} respectively. Then by $e^2_{n,\mathbf{d}}$ we compute $a_n^2$ and $b_n^2$ in \eqref{Nasymp2} and \eqref{Gasymp2}, respectively. For $\mathbf{d}=(d_1,...,d_s,k,0,...,0)$ with $0\leq s\leq n-1$ and $k\geq 1$, we compute the following as $g\to\infty$,
$$\left[\frac{1}{g^2}\right]\left(\frac{4^{|\mathbf{d}|}\left([\tau_{d_1}\cdots\tau_{d_s}\tau_{k}\tau_0^{n-s-1}]^\Theta_{g}-4[\tau_{d_1}\cdots\tau_{d_s}\tau_{k+1}\tau_0^{n-s-1}]^\Theta_{g}\right)}{V_{g,n}^\Theta}\right).$$

\noindent$\bullet$ \textbf{Contribution from} $V_1$. 
By Remark 3.2 and \eqref{e1}, we get the following as $g\to\infty$,
\begin{align*}
&\Big[\frac{1}{g}\Big]\left(\frac{4^{|\mathbf{d}|}A_{\mathbf{d},g,n}}{V_{g,n-1}^\Theta}\right)\\&=\sum\limits_{i=1}\limits^{n}(2d_i+1)\sum\limits_{L=0}\limits^{\infty}\Big(\frac{a_{L}}{4^{L}}-\frac{a_{L-1}}{4^{L-1}}\Big)\left(-\frac{4(|\mathbf{d}|+L)(|\mathbf{d}|+L+n-5/2)}{\pi^2}\right)\\&\qquad+(n-1-s)\sum\limits_{L=0}\limits^{\infty}\Big(\frac{a_{L}}{4^{L}}-\frac{a_{L-1}}{4^{L-1}}\Big)\left(-\frac{4(|\mathbf{d}|+L)(|\mathbf{d}|+L+n-5/2)}{\pi^2}\right)\\&=-\frac{8|\mathbf{d}|-8k+4n-4}{\pi^3}\left[4|\mathbf{d}|^2+(4n-8)|\mathbf{d}|+n-2+\frac{\pi^2}{12}\right].
\end{align*}
Therefore, as $g\to\infty$, by $a_n^1$ and \eqref{V1}, we have
\begin{equation}\label{V12}
\Big[\frac{1}{g^2}\Big]V_1=-\frac{8|\mathbf{d}|-8k+4n-4}{\pi^4}\times\left[4|\mathbf{d}|^2+(4n-8)|\mathbf{d}|-n+2+\Big(\frac{n}{2}-\frac{5}{3}\Big)\pi^2\right].
\end{equation}

\noindent$\bullet$ \textbf{Contribution from} $V_2$. Withe the same argument above, we get 
\begin{align*}
\Big[\frac{1}{g}\Big]\left(\frac{4^{|\mathbf{d}|}B_{\mathbf{d},g,n}}{V_{g,n-1}^\Theta}\right)&=2\sum\limits_{L=0}\limits^{\infty}\Big(\frac{a_{L}}{4^{L}}-\frac{a_{L-1}}{4^{L-1}}\Big)(L+k)\left(-\frac{4(|\mathbf{d}|+L)(|\mathbf{d}|+L+n-3/2)}{\pi^2}\right)\\&=-\frac{8}{\pi^3}\left[|\mathbf{d}|^2+\Big(n+\frac{\pi^2}{6}-\frac{3}{2}\Big)|\mathbf{d}|+\Big(\frac{\pi^2}{12}-\frac{1}{2}\Big)n+\frac{1}{2}-\frac{\pi^2}{48}\right]\\&\qquad-\frac{8k}{\pi^3}\left[4|\mathbf{d}|^2+(4n-8)|\mathbf{d}|+\frac{\pi^2}{12}+4-3n\right],
\end{align*}
and then by \eqref{V2}, $a_n^1$ and $b_n^1$, we have 
\begin{equation}\label{V22}
\begin{aligned}
\Big[\frac{1}{g^2}\Big]V_2&=-\frac{8k}{\pi^4}\times\left[4|\mathbf{d}|^2+(4n-8)|\mathbf{d}|-n+2+\Big(\frac{n}{2}-\frac{5}{3}\Big)\pi^2\right]\\&\qquad-\frac{8}{\pi^4}\left[|\mathbf{d}|^2+\Big(n+\frac{\pi^2}{6}-\frac{3}{2}\Big)|\mathbf{d}|+\frac{5\pi^2}{24}n-\frac{11\pi^2}{24}\right].
\end{aligned}
\end{equation}

\noindent$\bullet$ \textbf{Contribution from} $V_3$. For fixed n, with the help of \eqref{N1}, Lemma \ref{Vproduct} implies that 
\begin{align*}
\sum_{\substack{g_1+g_2=g\\[3pt]I\sqcup J=\{2,...,n\}}}V^\Theta_{g_1,|I|+1}\times V_{g_2,|J|+1}^\Theta&=2V_{g-1,n}^\Theta\times V_{1,1}^\Theta+4\sum_{\substack{g_1+g_2=g\\[3pt]I\sqcup J=\{2,...,n\}\\[3pt] |I|\geq |J|\geq 1}}V^\Theta_{g_1,|I|+1}\times V_{g_2,|J|+1}^\Theta\\&=\frac{V_{g-1,n}^\Theta}{4}+\mathit{O}\left(\frac{V_{g,n+1}^\Theta}{g^4}\right)\\&=\frac{V_{g-1,n}^\Theta}{4}+\mathit{O}\left(\frac{V_{g,n}^\Theta}{g^3}\right).
\end{align*}
\eqref{Vratio2} and $e^i_{n,\mathbf{d}}$,$a_n^i,b_n^i$ for $i=0,1$ in \eqref{tauasymp2}-\eqref{Gasymp2} imply that $g\to\infty$,\begin{equation}\label{g-1}\frac{V_{g-1,n}^\Theta}{V_{g,n}^\Theta}=\frac{1}{g^2}\times\frac{1}{4\pi^2}+\mathit{O}\left(\frac{1}{g^3}\right).\end{equation}
Therefore, as $g\to\infty$,
\begin{equation}\label{V32}
\Big[\frac{1}{g^2}\Big]V_3=\frac{1}{2\pi^3}-\frac{\delta_{k,0}}{8\pi^2},
\end{equation}
where $\delta_{k,0}=1$ when $k=0$ and otherwise $\delta_{k,0}=0$.

Hence, by \eqref{V12}, \eqref{V22} and \eqref{V32}, we get
\begin{align*}
&\left[\frac{1}{g^2}\right]\left(\frac{4^{|\mathbf{d}|}\left([\tau_{d_1}\cdots\tau_{d_s}\tau_{k}\tau_0^{n-s-1}]^\Theta_{g}-4[\tau_{d_1}\cdots\tau_{d_s}\tau_{k+1}\tau_0^{n-s-1}]^\Theta_{g}\right)}{V_{g,n}^\Theta}\right)\\&=-\frac{1}{\pi^4}\Bigg[32|\mathbf{d}|^3+(48n-72)|\mathbf{d}|^2+\Big(16n^2+(4\pi^2-48)n-12\pi^2+36\Big)|\mathbf{d}|\\&\qquad+(2\pi^2-4)n^2+(12-7\pi^2)n-8+3\pi^2-\frac{\pi}{2}\Bigg]-\frac{\delta_{k,0}}{8\pi^2}.
\end{align*}
Then by \eqref{summation}, for $\mathbf{d}=(d_1,...,d_s,0,...,0)$ with $1\leq s\leq n$ and $d_i\geq 1$ for $1\leq i\leq s$, we have
\begin{equation}\label{e2}
\begin{aligned}
e^2_{n,\mathbf{d}}&=-\frac{1}{\pi^4}\Bigg[8|\mathbf{d}|^4+(16n-40)|\mathbf{d}|^3+\Big(8n^2+(2\pi^2-38)n-6\pi^2+62\Big)|\mathbf{d}|^2\\&\qquad+\Big((2\pi^2-12)n^2-(9\pi^2-34)n+9\pi^2-\frac{\pi}{4}-38\Big)|\mathbf{d}|\Bigg]-\frac{s}{16\pi^2}.
\end{aligned}
\end{equation}
By \eqref{Nre} and Remark 3.3, we have
\begin{equation}\label{an2}\begin{aligned}
a_n^2=\frac{1}{\pi^4}\Bigg[\Big(-\frac{3\pi^2}{2}+8\Big)n^2-\Big(\frac{\pi^4}{8}-5\pi^2+20\Big)n+\frac{17\pi^4}{64}-\frac{\pi^3}{16}-\frac{27\pi^2}{8}+\frac{\pi}{4}+12\Bigg].
\end{aligned}
\end{equation}
By \eqref{Vratio2}, $a_n^i,b_n^i$ for $i=0,1$ in \eqref{Nasymp2} and \eqref{Gasymp2} and \eqref{e2}, we get $b_{n+2}^2$ for $n\geq 0$ from \eqref{Gre}
\begin{equation}\label{bn22}
b_{n+2}^2=\frac{1}{\pi^4}\left[(2\pi^2-4)n^2+(\pi^2-4)n-\frac{35\pi^2}{8}-\frac{\pi}{2}\right]
\end{equation}
By $a_n^i,b_n^i$ for $i=1,2$, \eqref{n1} and \eqref{n0}, we get
$$b_1^2=\frac{1}{\pi^4}\left(-\frac{27\pi^2}{8}-\frac{\pi}{2}\right),\ \ \text{and}\ \ b_0^2=\frac{1}{\pi^4}\left(\frac{13\pi^2}{8}-\frac{\pi}{2} - 8\right).$$
There, for $n\geq 0$, we have
\begin{equation}\label{bn2}
b_{n}^2=\frac{1}{\pi^4}\left[(2\pi^2-4)n^2-(7\pi^2-12)n+\frac{13\pi^2}{8}-\frac{\pi}{2}-8\right].
\end{equation} \qed

\vskip 20pt
\section{Polynomial property of cofficients in the expansions}
\setcounter{equation}{0}
In this section, we prove Theorem \ref{Polythm} via the Algorithm in Section 3.1. For the convience, the proof will be given in two steps. First, we prove the existence of $e^i_{n,\mathbf{d}}$, $a^i_n$ and $b^i_n$ and then we prove their polynomial properties. In fact, we can prove a more stronger statement for the existence of $e^i_{n,\mathbf{d}}$, that is, each coefficient $e^i_{n,\mathbf{d}}$ is bounded by some polynomials. The existence of these polynomials as well as the existence of $a^i_n$ and $b^i_n$ can be proved by induction on $i$ via the Algorithm in Section 3.1.

\begin{theorem}As $g\to\infty$,\\
(1) For any given $s\geq 0$, $n\geq 1$, there exist polynomials $T_{n}^s(d_1,...,d_n)$ and $t_{n}^s(d_1,...,d_n)$ in variables $d_1,...,d_n$ of degree $2s+2$ and $2s$ respectively such that for any $\mathbf{d}=(d_1,...,d_n)$, one has
\begin{equation}\label{ePoly1}
\bigg|\frac{4^{|\mathbf{d}|}[\tau_{d_1}\cdots\tau_{d_n}]_g^\Theta}{V_{g,n}^\Theta}-1-\frac{e_{n,\mathbf{d}}^1}{g}-\cdots-\frac{e_{n,\mathbf{d}}^s}{g^s}\bigg|\leq\frac{T_{n}^s(d_1,...,d_n)}{g^{s+1}},
\end{equation}
and
\begin{equation}\label{ePoly2}
|e_{n,\mathbf{d}}^s|\leq t_{n}^s(d_1,...,d_n).
\end{equation}
(2) For any given $n,s\geq 0$, there exist $a_n^i$ and $b_n^i$ for $i=1,...,s$ independent of $g$ such that
\begin{equation}\label{Nasymp4.1}
\frac{\frac{\pi}{2}(2g-2+n)V^\Theta_{g,n}}{V^\Theta_{g,n+1}}=1+\frac{a^1_{n}}{g}+\cdots+\frac{a^s_{n}}{g^s}+\mathit{O}\left(\frac{1}{g^{s+1}}\right),
\end{equation}
\begin{equation}\label{Gasymp4.1}
\frac{4V^\Theta_{g-1,n+2}}{V^\Theta_{g,n}}=1+\frac{b^1_{n}}{g}+\cdots+\frac{b^s_{n}}{g^s}+\mathit{O}\left(\frac{1}{g^{s+1}}\right).
\end{equation}
\end{theorem}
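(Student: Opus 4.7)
The plan is to prove parts (1) and (2) jointly by strong induction on $s$, following the Algorithm of Section 3.1. For the base case $s=0$, part (1) is exactly Lemma \ref{e1-est}, giving $T_n^0(d_1,\ldots,d_n)=c_0|\mathbf{d}|^2$ of degree $2=2s+2$ and $t_n^0\equiv 1$; part (2) at $s=0$ is the constant-term calculation carried out in Section 3.2.1.

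For the inductive step, assume both (1) and (2) hold through level $s-1$. To establish (1) at level $s$, apply \eqref{summation} and \eqref{difference}: the left-hand side of \eqref{ePoly1} is a telescoping sum whose summands decompose as $V_1+V_2+V_3$ from \eqref{V1}--\eqref{V3}. Expanding via \eqref{AV}--\eqref{CV} and substituting the inductive expansions of the two volume ratios and of the super intersection ratios at level $s-1$, each $V_j$ can be written as a polynomial in $1/g$ of order $s$ plus an explicit remainder. The key analytic input is Lemma \ref{Coeff-est}(1), which yields the geometric decay $a_L/4^L-a_{L-1}/4^{L-1}\asymp 1/4^L$; this lets us truncate the inner $L$-sums with exponentially small error, while Fact 1 in Appendix A reorders the limits. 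The convolution $\sum_{g_1+g_2=g}V^\Theta_{g_1,|I|+1}V^\Theta_{g_2,|J|+1}$ in $V_3$ is controlled by Lemma \ref{Vproduct} together with \eqref{Vratio2}, reducing its contribution to finitely many diagonal terms handled by the inductive hypothesis at appropriately reduced levels. Tracking how the $(2d_j+1)$ weights and the $d_1+L$ shifts inflate the inductive polynomial $T_n^{s-1}$ shows that each recursion costs at most one factor $|\mathbf{d}|^2$, producing $T_n^s$ of degree $2s+2$; the coefficient $e^s_{n,\mathbf{d}}$ arises as the $g\to\infty$ limit of the resulting polynomial expression in $\mathbf{d}$, and therefore obeys the degree-$2s$ bound \eqref{ePoly2}.

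For (2) at level $s$, substitute the expansion of $4^L[\tau_L\tau_0^n]_g^\Theta/V_{g,n+1}^\Theta$ just obtained into \eqref{Nre} and apply Fact 1 once more; absolute convergence of the resulting $L$-series is automatic because the polynomial-in-$L$ bound \eqref{ePoly1} is summed against the factorially decaying weights $(\pi/2)^{2L+1}/(2L+1)!$. Similarly, \eqref{Gre} combined with \eqref{ePoly1} applied to $4[\tau_1\tau_0^{n+1}]_g^\Theta/V_{g,n+2}^\Theta$ and with Lemma \ref{Vproduct} applied to the convolutive sum yields \eqref{Gasymp4.1} for $n\ge 2$; the cases $n=0,1$ reduce to the $n\ge 2$ case via \eqref{n1}--\eqref{n0} and \eqref{Vratio2}. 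The main obstacle is the uniform-in-$\mathbf{d}$ control of the inner $L$-sums in $V_1,V_2,V_3$: the index shift $k_1+k_j+L$ in \eqref{AV} forces evaluation of $T_n^{s-1}$ at arguments growing with $L$, and the geometric decay from Lemma \ref{Coeff-est}(1) must dominate this polynomial growth uniformly as $\mathbf{d}$ varies. The bookkeeping needed to conclude the sharp degree bounds $2s+2$ for $T_n^s$ and $2s$ for $t_n^s$ is the most technically involved part of the proof.
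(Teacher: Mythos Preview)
Your proposal is correct and follows essentially the same approach as the paper: joint strong induction on $s$, with the inductive step for part (1) going through the telescoping identity \eqref{summation} and the decomposition \eqref{difference} into $V_1+V_2+V_3$, and part (2) then deduced from \eqref{Nre}, \eqref{Gre}, Lemma \ref{Vproduct}, and the reductions \eqref{n1}--\eqref{n0}. The only cosmetic difference is that the paper first recasts part (1) as the equivalent ``difference'' statement (Proposition 4.2, with $\widetilde{e}^s_{n,\mathbf{d}}$, $\widetilde{T}_n^s$, $\widetilde{t}_n^s$ of degrees $2s-1$ and $2s+1$) and uses Fact 4(2) in place of your direct appeal to the geometric decay of Lemma \ref{Coeff-est}(1) for the degree bookkeeping; the content is the same.
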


By \eqref{summation} and Fact 4 (3) (cf. Appendix A), Theorem 4.1 (1) is equivalent to the following statement.

\begin{proposition}
There exist polynomials $\widetilde{T}_{n}^s(d_1,...,d_n)$ and $\widetilde{t}_{n}^s(d_1,...,d_n)$ in variables $d_1,...,d_n$ of degrees $2s+1$ and $2s-1$ respectively such that for any $\mathbf{d}=(d_1,...,d_n)$, one has
\begin{equation}\label{ePoly11}
\bigg|\frac{4^{|\mathbf{d}|}\left([\tau_{d_1}\tau_{d_2}\cdots\tau_{d_n}]_g^\Theta-4[\tau_{d_1+1}\tau_{d_2}\cdots\tau_{d_n}]_g^\Theta\right)}{V_{g,n}^\Theta}-\frac{\widetilde{e}_{n,\mathbf{d}}^1}{g}-\cdots-\frac{\widetilde{e}_{n,\mathbf{d}}^s}{g^s}\bigg|\leq\frac{\widetilde{T}_{n}^s(d_1,...,d_n)}{g^{s+1}},
\end{equation}
and
\begin{equation}\label{ePoly21}
|\widetilde{e}_{n,\mathbf{d}}^s|\leq \widetilde{t}_{n}^s(d_1,...,d_n).
\end{equation}
\end{proposition}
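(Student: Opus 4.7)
The plan is to prove Proposition 4.2 by strong induction on $s$, carried out jointly with the existence part of Theorem \ref{Polythm} (2). This joint induction is forced by the Algorithm in Section 3.1: extracting the order-$s$ behavior of the left-hand side of \eqref{ePoly11} via the decomposition \eqref{difference} requires the expansions of $\frac{4V_{g-1,n+2}^\Theta}{V_{g,n}^\Theta}$ and $\frac{\frac{\pi}{2}(2g-2+n)V_{g,n}^\Theta}{V_{g,n+1}^\Theta}$ up to order $s-1$, while \eqref{Nre} and \eqref{Gre} feed those ratios back in terms of the difference bounds of \eqref{ePoly11}. The base case $s=1$ is supplied by the explicit computation in Section 3.2, which gives $\widetilde{e}_{n,\mathbf{d}}^1$ as a polynomial of degree at most $1$ in $\mathbf{d}$, with a remainder of order $\mathit{O}(1/g^2)$ bounded polynomially via Lemma \ref{e1-est}.

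For the inductive step I would fix $s\ge 2$ and assume both the proposition at orders $\le s-1$ and the existence of $a_n^i, b_n^i$ for $i\le s-1$. Using \eqref{difference} I would treat $V_1, V_2, V_3$ separately. For $V_1$, substitute into \eqref{AV} the inductive expansion of each ratio $\frac{4^{|\mathbf{k}|+L}[\tau_{k_1+k_j+L}\tau_{k_2}\cdots\widehat{\tau_{k_j}}\cdots\tau_{k_n}]_g^\Theta}{V_{g,n-1}^\Theta}$; the inductive polynomial bound in the indices has degree at most $2(s-1)+1$, and after multiplication by $(2k_j+1)$ and by the weight $\frac{a_L}{4^L}-\frac{a_{L-1}}{4^{L-1}}$, Lemma \ref{Coeff-est} (4) collapses the $L$-summation to an explicit rational combination of $\pi^2$ times a polynomial in $|\mathbf{k}|$, $k_j$ and $n$ of degree at most $2s-1$, plus a remainder polynomial of degree at most $2s+1$. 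The same strategy applies to $V_2$ via \eqref{BV}, the extra internal sum $\sum_{c_1+c_2=k_1+L-1}$ contributing one additional polynomial factor in $k_1+L$ that stays within the budget. For $V_3$, Lemma \ref{Vproduct} shows that the genus-splitting weight in \eqref{V3} is $\mathit{O}(1/g^{s+1})$ except for finitely many $(g_1,g_2,I,J)$ with $\min\bigl\{2g_1+|I|,\,2g_2+|J|\bigr\}\le s-1$; each surviving term feeds into the induction through \eqref{CV} and the explicit ratio formula \eqref{Vratio2}, and contributes polynomial-bounded expressions. Summing $V_1+V_2+V_3$ produces $\widetilde{e}_{n,\mathbf{d}}^s$ together with a remainder bounded by a polynomial of degree $2s+1$; substituting the resulting difference bounds into \eqref{Nre} and \eqref{Gre} and invoking Lemma \ref{Coeff-est} (4) once more yields $a_n^s$ and $b_n^s$ simultaneously, closing the joint induction.

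The main obstacle is the bookkeeping of polynomial degrees: each substitution introduces products of polynomial bounds from the inductive hypothesis with explicit polynomials in $L$ and $\mathbf{k}$, and we must verify that the total degree in the indices stays within $2s-1$ for $\widetilde{e}_{n,\mathbf{d}}^s$ and $2s+1$ for $\widetilde{T}_n^s$. The tool that makes this feasible is Lemma \ref{Coeff-est} (4), which replaces each moment $\sum_L L^j\bigl(\frac{a_L}{4^L}-\frac{a_{L-1}}{4^{L-1}}\bigr)$ by a finite rational expression in $\pi^2$, so that the only free index data left in the output are $|\mathbf{k}|$, $k_j$, $n$. A subsidiary point is ensuring no circularity in the joint induction: by Lemma \ref{Vproduct}, the $V_3$ contribution at order $1/g^s$ only invokes the inductive hypothesis at strictly smaller orders, and the feedback loop between difference bounds and the $a_n^i, b_n^i$ through \eqref{Nre}, \eqref{Gre} and \eqref{Vratio2} likewise only consumes data at orders $\le s-1$ when producing the order-$s$ statement.
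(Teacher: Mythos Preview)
Your proposal is correct and follows essentially the same route as the paper: a joint induction on $s$ together with the existence of $a_n^i,b_n^i$ (the paper's Lemmas 4.3 and 4.4), carried out through the decomposition \eqref{difference} into $V_1+V_2+V_3$, with the $L$-summations collapsed via the moment identities of Lemma \ref{Coeff-est} (packaged in the paper as Fact 4 (2)), the $V_3$ term truncated by Lemma \ref{Vproduct}, and the feedback to $a_n^s,b_n^s$ via \eqref{Nre}, \eqref{Gre}, \eqref{Vratio2}. The degree bookkeeping and the non-circularity of the induction that you flag are exactly the points the paper addresses.
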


\noindent\textbf{Proof of Theorem 4.1.} We use induction on $r$ to prove Theorem 4.1. Moreover, we need the following two lemmas.
\begin{lemma}
\eqref{Nasymp4.1}, \eqref{Gasymp4.1}, \eqref{ePoly11} and \eqref{ePoly21} for $s<r$ impliy \eqref{ePoly11} and \eqref{ePoly21} for $s=r$.
\end{lemma}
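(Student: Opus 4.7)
The strategy is to apply recursion $(\mathbf{\Rmnum{3}})$ to the left-hand side of \eqref{ePoly11}, producing the decomposition $V_1+V_2+V_3$ from \eqref{V1}--\eqref{V3}, and to show that each $V_i$ admits an expansion in $1/g$ up to order $r$ whose coefficients and remainder are polynomially bounded in $\mathbf{k}=(k_1,\ldots,k_n)$ of the degrees claimed in Proposition 4.2. Summing the three pieces then yields the induction step. The algebraic manipulations rely on Fact 3 and Fact 4 of Appendix A, which guarantee that reciprocals, products, and truncated sums of series with polynomially bounded coefficients remain in the same class.

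For $V_1$ in \eqref{V1}, the factor $\pi(2g-3+n)V_{g,n-1}^\Theta/(2V_{g,n}^\Theta)$ is the reciprocal of the left-hand side of \eqref{Nasymp4.1}, so by the hypothesis for $s=r-1$ and Fact 3 it expands to order $r-1$ in $1/g$. The factor $4^{|\mathbf{k}|}A_{\mathbf{k},g,n}/V_{g,n-1}^\Theta$, written as in \eqref{AV}, is a double sum over $j\in\{2,\ldots,n\}$ and $L\geq 0$. Each summand carries weight $a_L/4^L-a_{L-1}/4^{L-1}$, which decays like $1/4^L$ by Lemma \ref{Coeff-est}(1), and the ratio $4^{|\mathbf{k}|+L}[\tau_{k_1+k_j+L}\prod_{i\neq 1,j}\tau_{k_i}]_g^\Theta/V_{g,n-1}^\Theta$ equals $1$ minus a telescoping sum of differences of the form in \eqref{ePoly11}, by \eqref{summation}. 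Applying the inductive hypothesis \eqref{ePoly11}--\eqref{ePoly21} for $s\leq r-1$ to each such difference gives an expansion to order $r-1$ with coefficients and remainder bounded by a polynomial in $(k_1+k_j+L,k_2,\ldots,\widehat{k_j},\ldots,k_n)$. The resulting sum over $L$ converges term by term, because $\sum L^m(a_L/4^L-a_{L-1}/4^{L-1})$ is finite for every $m$ by Lemma \ref{Coeff-est}(4), so the dependence on $L$ is integrated out, leaving polynomials in $\mathbf{k}$. Multiplying with the expansion of the first factor via Fact 4 produces the desired expansion of $V_1$ to order $r$.

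The treatment of $V_2$ is parallel, using \eqref{Gasymp4.1} at order $r-1$ for the prefactor $4V_{g-1,n+2}^\Theta/V_{g,n}^\Theta$, the reciprocal of \eqref{Nasymp4.1} at order $r-1$ for $\pi(2g-3+n)V_{g-1,n+1}^\Theta/(2V_{g-1,n+2}^\Theta)$, and the inductive hypothesis applied at genus $g-1$ to $4^{|\mathbf{k}|-1}B_{\mathbf{k},g,n}/V_{g-1,n+1}^\Theta$ expanded via \eqref{BV}; the reparametrization $(g-1)^{-j}\to g^{-j}$ only adds polynomially bounded corrections modulo $O(1/g^{r+1})$. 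For $V_3$, Lemma \ref{Vproduct} gives
$$\frac{1}{V_{g,n}^\Theta}\sum_{\substack{g_1+g_2=g\\ I\sqcup J=\{2,\ldots,n\}}}V_{g_1,|I|+1}^\Theta\cdot V_{g_2,|J|+1}^\Theta = O\Bigl(\frac{1}{g^2}\Bigr),$$
and the tail with $\min(2g_i+|I|)$ large is absorbed into the $O(1/g^{r+1})$ remainder, again by Lemma \ref{Vproduct}. For the finitely many boundary pairs we expand each factor $4^{|\mathbf{k}|}C_{\mathbf{k},g,n}/(V_{g_1,|I|+1}^\Theta\cdot V_{g_2,|J|+1}^\Theta)$ via \eqref{CV} using the inductive hypothesis up to order $r-2$ (sufficient because $V_3$ already carries the $1/g^2$ saving), together with \eqref{Vratio2} to re-express the $V$-ratios in terms of the quantities in \eqref{Nasymp4.1} and \eqref{Gasymp4.1}.

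The main obstacle is tracking that the polynomial bounds in $\mathbf{k}$ survive the recursive substitutions and achieve the sharp degrees $2r+1$ and $2r-1$ claimed in Proposition 4.2. Substituting $k_1\mapsto k_1+k_j+L$ in $A_{\mathbf{k},g,n}$ raises the degree in the $k_i$'s only by the known increase in $\widetilde{T}_n^{r-1}$ and $\widetilde{t}_n^{r-1}$, and the $L$-variable is eliminated by summation against the exponentially decaying weights in Lemma \ref{Coeff-est}(1), (4). The same applies to the convolutions $c_1+c_2=k_1+L-1$ in $B$ and $C$. A direct induction on $r$ then yields polynomials $\widetilde{T}_n^r$ of degree $2r+1$ and $\widetilde{t}_n^r$ of degree $2r-1$, completing the step.
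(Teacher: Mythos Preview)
Your proposal is correct and follows essentially the same route as the paper: both decompose the difference into $V_1+V_2+V_3$ via \eqref{difference}--\eqref{V3}, then apply the inductive hypotheses \eqref{Nasymp4.1}, \eqref{Gasymp4.1}, \eqref{ePoly11}--\eqref{ePoly21} at level $s<r$ to each factor, using Lemma~\ref{Coeff-est} and Fact~4(2) to control the $L$-sums and Lemma~\ref{Vproduct} to truncate $V_3$ to finitely many boundary terms. One small slip: for $V_1$ and $V_2$ the volume-ratio prefactors are the left-hand side of \eqref{Nasymp4.1} with shifted $n$ (and $g$), not its reciprocal, but this does not affect the substance of your argument.
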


\begin{lemma}
\eqref{ePoly11}, \eqref{ePoly21} for $s=r$ and \eqref{Nasymp4.1}, \eqref{Gasymp4.1} for $s<r$ implies \eqref{Nasymp4.1}, \eqref{Gasymp4.1} for $s=r$.
\end{lemma}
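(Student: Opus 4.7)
The plan is to run Step~2 and Step~3 of the Algorithm in Section~3.1 one more time, promoting \eqref{Nasymp4.1} and \eqref{Gasymp4.1} from order $r-1$ to order $r$. By the preceding lemma, together with the telescoping identity \eqref{summation}, the normalized super intersection numbers $\frac{4^{|\mathbf{d}|}[\tau_{d_1}\cdots\tau_{d_n}]_g^\Theta}{V_{g,n}^\Theta}$ admit an order-$r$ expansion whose coefficients and remainder are bounded by polynomials in $\mathbf{d}$ (this is \eqref{ePoly1}--\eqref{ePoly2} at $s=r$). This is the input I will use.

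For \eqref{Nasymp4.1} I begin with \eqref{Nre}. Substituting the order-$r$ expansion of $\frac{4^L[\tau_L\tau_0^n]_g^\Theta}{V_{g,n+1}^\Theta}$ and using that its coefficients $e^i_{n+1,(L,0,\ldots,0)}$ and its remainder $T_{n+1}^r(L,0,\ldots,0)$ are polynomials in $L$, the factorial decay of $\frac{(\pi/2)^{2L+1}}{(2L+1)!}$ absorbs every polynomial factor. Hence Fact~1 (cf.\ Appendix~A) yields
\[
a_n^i=\sum_{L=0}^{\infty}\frac{(-1)^L(\pi/2)^{2L+1}}{(2L+1)!}\,e^i_{n+1,(L,0,\ldots,0)},\qquad i=1,\ldots,r,
\]
while the remainder contribution is dominated by $g^{-r-1}\sum_{L\ge 0}\frac{(\pi/2)^{2L+1}}{(2L+1)!}T_{n+1}^r(L,0,\ldots,0)=O(g^{-r-1})$. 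This proves \eqref{Nasymp4.1} at $s=r$.

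For \eqref{Gasymp4.1} with $n\ge 2$ I apply \eqref{Gre}. The leading term $\frac{4[\tau_1\tau_0^{n-1}]_g^\Theta}{V_{g,n}^\Theta}$ is controlled directly by the preceding lemma. For the join sum $\frac{1}{V_{g,n}^\Theta}\sum V^\Theta_{g_1,|I|+2}V^\Theta_{g_2,|J|+2}$, Lemma~\ref{Vproduct} shows that contributions from pairs with $\min(2g_1+|I|,\,2g_2+|J|)>r$ are $O(g^{-r-1})$; only finitely many boundary pairs survive, and each of these is handled by the factorization \eqref{Vratio2}, which expresses such a ratio as a finite product of factors of the form $\frac{\pi}{2}(2g-2+k)V_{g,k}^\Theta/V_{g,k+1}^\Theta$ and $4V_{g-1,k+2}^\Theta/V_{g,k}^\Theta$ that have order-$r$ expansions (at the appropriate reduced order in $g$) by the inductive hypothesis. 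The remaining cases $n=0,1$ follow from \eqref{n1} and \eqref{n0}, since these identities express $\frac{4V_{g-1,n+2}^\Theta}{V_{g,n}^\Theta}$ as a product of $a$- and $b$-ratios at larger $n$ that are already known to order $r$.

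The main obstacle is the bookkeeping in the join step: each application of \eqref{Vratio2} introduces several ratio-factors each with its own error tail, and one has to expand the product, subtract off the lower-order contributions already established at $s<r$, and verify that what remains is a rational linear combination of $g^{-i}$ for $1\le i\le r$ plus an $O(g^{-r-1})$ error. The polynomial control in \eqref{ePoly21}, as opposed to mere existence of the $e^i_{n,\mathbf{d}}$, is exactly what guarantees that every sum encountered (both the $L$-sum over $\mathbf{(\Rmnum{2})}$ and the genus sums coming from \eqref{Vratio2}) is absolutely convergent at the correct order, and hence that the coefficient and remainder at $g^{-r}$ are well-defined.
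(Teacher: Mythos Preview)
Your proposal is correct and follows essentially the same approach as the paper's own proof: you run Step~2 of the Algorithm via \eqref{Nre} with Fact~1 to get \eqref{Nasymp4.1} at $s=r$, then Step~3 via \eqref{Gre} together with Lemma~\ref{Vproduct} and \eqref{Vratio2} for the join term to get \eqref{Gasymp4.1} at $s=r$ for $n\ge 2$, and finally handle $n=0,1$ by \eqref{n1}--\eqref{n0}. The paper organizes the argument in the same way (see \eqref{Ncalcu}--\eqref{Gcalcu2}); your additional remarks on why the polynomial bounds \eqref{ePoly21} make the $L$-sums and remainder absolutely convergent are a helpful clarification but not a departure from the paper's method.
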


\noindent Assume that two lemmas above hold, then Theorem 4.1 follows by Lemma 4.3 and Lemma 4.4,  since the cases $s=0,1$ hold in view of Theorem 3.1.\qed

\noindent\textbf{Proof of Lemma 4.3.}
By \eqref{difference}, we deduce that \eqref{ePoly11} and \eqref{ePoly21} for $s=r$ is equivalent to obtain the expansions $V_1$, $V_2$ and $V_3$ up to $\mathit{O}(1/g^{r+1})$. Moreover, it reduces to to get the expansions of $\displaystyle\frac{4^{|\mathbf{d}|}A_{\mathbf{d},g,n}}{V_{g,n-1}^\Theta},\ \frac{4^{|\mathbf{d}|-1}B_{\mathbf{d},g,n}}{V_{g-1,n+1}^\Theta},\  \frac{4^{|\mathbf{d}|}C_{\mathbf{d},g,n}}{V_{g,n}^\Theta}$ up to $\mathit{O}(1/g^{r})$ in view of \eqref{V1}-\eqref{V3} and that \eqref{Nasymp4.1}-\eqref{ePoly21} holds for $s<r$. For example, \eqref{AV} now reads
\begin{equation}\label{Apoly}\frac{4^{|\mathbf{d}|}A_{\mathbf{d},g,n}}{V_{g,n-1}^\Theta}=\sum\limits_{j=2}\limits^{n}(2d_j+1)\sum\limits_{L=0}\limits^{\infty}\Big(\frac{a_{L}}{4^{L}}-\frac{a_{L-1}}{4^{L-1}}\Big)\left(1+\frac{e_{n,\mathbf{d}(L,j)}^1}{g}+\cdots+\frac{e_{n,\mathbf{d}(L,j)}^{r-1}}{g^{r-1}}+E_{\mathbf{d}(L,j),r}\right),\end{equation}
where $\mathbf{d}(L,j)=(L+d_1+d_j,d_2,...,\widehat{d_j},...,d_n)$ and $|E_{\mathbf{d}(L,j),r}|\leq T_n^{r-1}\left(\mathbf{d}(L,j)\right)/g^r$.
So by Fact 4 (2) (cf. Appendix A), for $s<r$, $$\widetilde{e}_{n,\mathbf{d}}^s=\sum\limits_{j=2}\limits^{n}(2d_j+1)\sum\limits_{L=0}\limits^{\infty}\left(\frac{a_L}{4^L}-\frac{a_{L-1}}{4^{L-1}}\right)e_{n,\mathbf{d}(L,j)}^{s-1}$$ is again 
a polynomial in varibles $d_1,...,d_n$ of degree $2s-1$ with the help of Lemma \ref{Coeff-est} (4).
Therefore, the expansion of $\displaystyle\frac{4^{|\mathbf{d}|}A_{\mathbf{d},g,n}}{V_{g,n-1}^\Theta}$ up to $\mathit{O}(1/g^{r})$ is derived. Hence, we get the expansion of $V_1$ up to $\mathit{O}(1/g^{r+1})$ by \eqref{Nasymp4.1}-\eqref{ePoly21} holding for $s<r$ via \eqref{V1}. The expansions of other two terms $V_2$ and $V_3$ can be obtained in a similar way.\qed

\noindent\textbf{Proof of Lemma 4.4.} Let $e^s_{n,L}$ denotes $e^s_{n,\mathbf{d}}$ for $\mathbf{d}=(L,0,...,0)$. By Fact 1 (cf. Appendix A) and \eqref{ePoly11}-\eqref{ePoly21} for $s=r$, as $g\to\infty$, \eqref{Nre} implies
\begin{equation}\label{Ncalcu}\frac{\frac{\pi}{2}(2g-2+n)V_{g,n}^\Theta}{V_{g,n+1}^\Theta}=1+\frac{a_n^1}{g}+\cdots+\frac{a_n^{r}}{g^{r}}+\mathit{O}\left(\frac{1}{g^{r+1}}\right),\end{equation}
where for any $1\leq i\leq r$\begin{equation}\label{Ncalcu2}a_n^i=\sum\limits_{L=0}\limits^{\infty}\frac{(-1)^L(\pi/2)^{2L+1}}{(2L+1)!}e^i_{n,L}.\end{equation}
The existence of $a_n^i$ is guaranteed by Remark 3.2 and that $e^i_{n,L}$ is a polynomial in $L$ of degree $2i$.

Similarly, we can use \eqref{Gre} to calculate $b_n^i$ for $1\leq i\leq r$ in the expansion of $\frac{4V_{g-1,n+4}^\Theta}{V_{g,n+2}^\Theta}$. We apply \eqref{ePoly11} and \eqref{ePoly21} for $s=r$ to get the expansion of $\frac{4[\tau_1\tau_0^{n+1}]_g^\Theta}{V_{g,n+2}^\Theta}$ up to $\mathit{O}\left(\frac{1}{g^{r+1}}\right)$, then we shall consider the second term in the right hand side of \eqref{Gre}. Note that by Lemma \ref{Vproduct},
\begin{equation}\label{Gcalcu2}\begin{aligned}&\frac{4}{V_{g,n+2}^\Theta}\sum_{\substack{g_1+g_2=g\\[3pt]I\sqcup J=\{1,...,n\}}}V^{\Theta}_{g_1,|I|+2}V^{\Theta}_{g_2,|J|+2}\\&=8\sum\limits_{2j+i\leq r}\binom{n}{i}\frac{V_{g-j,n+2-i}^\Theta}{V_{g,n+2}^\Theta}\times V_{j,i+2}^\Theta+\mathit{O}\left(\frac{1}{g^{r+1}}\right).\end{aligned}\end{equation}
Then by \eqref{Nasymp4.1} and \eqref{Gasymp4.1} for $s<r$, \eqref{Vratio2} yields the expansion of $\frac{V_{g-j,n+2-i}^\Theta}{V_{g,n+2}^\Theta}$ up to $\mathit{O}\left(\frac{1}{g^{r+1}}\right)$. So we get the expansion of $\frac{4V_{g-1,n+4}^\Theta}{V_{g,n+2}^\Theta}$ up to $\mathit{O}\left(\frac{1}{g^{r+1}}\right)$. The remaining cases for $\frac{4V_{g-1,3}^\Theta}{V_{g,1}^\Theta}$ and $\frac{4V_{g-1,2}^\Theta}{V_{g,0}^\Theta}$ can be obtained by \eqref{n1} and \eqref{n0}, respectively. \qed

\noindent\textbf{Proof of Theorem \ref{Polythm}.} Two lemmas as below are needed. Also, here we assume they hold and the proofs of them will be given later.

\begin{lemma}
(1) For given $s$, there exist polynomials $q_j(d_1,...,d_n)$, j=0,...,s such that $$e^s_{n,\mathbf{d}}=\sum\limits_{j=0}\limits^{s}e_{\mathbf{d},j}n^j,$$
where $|e_{\mathbf{d},j}|\leq q_j(d_1,...,d_n)$.\\
(2) Each $a_n^s$, $b_n^s$ in \eqref{Nasymp4.1}, \eqref{Gasymp4.1} respectively is a polynomial in $n$ of degree $s$.
\end{lemma}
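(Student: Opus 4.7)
I would prove (1) and (2) by a joint induction on $s$, mirroring the coupled-induction structure of Theorem~4.1 via Lemmas~4.3 and 4.4: at each step, part~(1) at level $s$ feeds into part~(2) at level $s$, and the pair together is the input for part~(1) at level $s+1$. The cases $s=0,1$ are verified by Theorem~3.1, whose explicit formulas in Remark~\ref{Coeffs} already display the claimed polynomial shape in $n$.

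\textbf{Inductive step for (1).} Assume the statement holds for all levels below $s$. The Algorithm of Section~3.1, specifically the decomposition \eqref{summation}--\eqref{difference} together with \eqref{V1}--\eqref{V3}, expresses $\widetilde{e}^{s}_{n,\mathbf{d}}$ (and hence $e^s_{n,\mathbf{d}}$ after telescoping via \eqref{summation}) as a finite $\mathbb{Q}$-linear combination of products of three kinds of ingredients: (a) lower-level coefficients $e^{i}_{n,\mathbf{d}'}$ with $i<s$ and $\mathbf{d}'$ a shifted sub-tuple of $\mathbf{d}$, arising from \eqref{AV}, \eqref{BV}, \eqref{CV}; (b) lower-level ratios $a^i_n$, $b^i_n$ from the prefactors $\frac{1}{\frac{\pi}{2}(2g-3+n)}\cdot(\ldots)$ in \eqref{V1}--\eqref{V2} expanded as geometric series in $n/g$; (c) the weights $\frac{a_L}{4^L}-\frac{a_{L-1}}{4^{L-1}}$ summed against polynomial expressions in $L$, which by Lemma~\ref{Coeff-est}(4) produce rational numbers. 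The inductive hypothesis supplies (a) and (b) as polynomials in $n$ of degree at most $i$, with coefficients bounded by polynomials in the relevant tuple entries; multiplying at most $s$ such factors gives total $n$-degree $\leq s$, and termwise bookkeeping shows that the coefficient of each $n^j$ is bounded by a polynomial in $d_1,\ldots,d_n$.

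\textbf{Inductive step for (2).} With (1) established at level $s$, equation \eqref{Nre} together with the expansion argument of Lemma~4.4 gives
$$
a_n^s = \sum_{L=0}^{\infty} \frac{(-1)^L (\pi/2)^{2L+1}}{(2L+1)!}\, e^s_{n,L},
$$
which by (1) is an absolutely convergent sum of polynomials in $n$ of degree $\leq s$. The sine-type series (cf.\ Remark~3.2) ensures that, for each fixed $j\leq s$, the coefficient of $n^j$ is a well-defined rational combination of $\pi^{-k}$, so $a_n^s$ is a polynomial in $n$ of degree $\leq s$. For $b_n^s$ with $n\geq 0$, \eqref{Gcalcu2} and \eqref{Vratio2} express $b_{n+2}^s$ in terms of $e^i_{n,\mathbf{d}}$ and the ratios $V_{g-j,n+2-i}^\Theta/V_{g,n+2}^\Theta$ that are polynomial in $n$ by the inductive hypothesis on (2); the boundary cases $b_0^s,b_1^s$ then follow from \eqref{n1}, \eqref{n0}.

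\textbf{Main obstacle.} The most delicate point is controlling the joint term $V_3$ in the inductive step for~(1). The outer combinatorial sum over $I\sqcup J=\{2,\ldots,n\}$ produces binomial factors $\binom{n-1}{|I|}$ that are themselves polynomials in $n$ of arbitrary degree, and the ratios $V_{g_1,|I|+1}^\Theta V_{g_2,|J|+1}^\Theta / V_{g,n}^\Theta$, obtained via Lemma~\ref{Vproduct} and \eqref{Vratio2}, carry additional polynomial-in-$n$ dependence through the iterated products. The key observation is that Lemma~\ref{Vproduct} allows one to discard all partitions with $\min(2g_1+|I|,\,2g_2+|J|) > s$ at the price of $O(1/g^{s+1})$, so only finitely many $|I|\leq s$ remain active; this caps the binomial degree at $s$. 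One must then check carefully that, after multiplying by the polynomial-in-$n$ factor from the ratio and by the lower-level $e^i$'s (with $i$ consumed by the splitting), the total $n$-degree of every contribution to $e^s_{n,\mathbf{d}}$ is still $\leq s$. The propagation of the polynomial bound in $\mathbf{d}$ is comparatively routine, as each $e^i_{n,\mathbf{d}'}$ appearing involves a sub-tuple $\mathbf{d}'$ of $\mathbf{d}$ with a controlled shift by $L$ absorbed into the rational $L$-sum of Lemma~\ref{Coeff-est}(4).
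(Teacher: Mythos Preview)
Your proposal is correct and follows essentially the same approach as the paper: a joint induction on $s$ with base cases from Theorem~3.1, where part~(1) at level $s$ yields part~(2) at level $s$ via \eqref{Ncalcu2} and \eqref{Gcalcu2}, and parts (1)--(2) at levels below $s$ yield part~(1) at level $s$ by analyzing $V_1$--$V_3$ in \eqref{difference} through \eqref{AV}--\eqref{CV} together with \eqref{summation} and Fact~4(3). Your discussion of the $V_3$ obstacle, in particular the truncation via Lemma~\ref{Vproduct} to cap the binomial degree, is more explicit than the paper's terse treatment but matches its intended mechanism.
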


\begin{lemma}
(1) Given $s$ and for any fixed $n$, $e^s_{n,\mathbf{d}}$ is a polynomial in $\mathbb{R}[d_1,...,d_n]$ of degree $2s$. Moreover, the coefficient $[d_1^{\alpha_1}\cdots d_n^{\alpha_n}]e^s_{n,\mathbf{d}}$ is a linear rational combination of $\pi^{-2\lfloor \frac{|\bm{\alpha}|+1}{2}\rfloor}$, $\pi^{-2\lfloor \frac{|\bm{\alpha}|+1}{2}\rfloor-1}$,...,$\pi^{-2s}$, where $|\bm{\alpha}|=\alpha_1+\cdots+\alpha_n\leq 2s$.\\
(2) Each $a_n^s$, $b_n^s$ in \eqref{Nasymp4.1}, \eqref{Gasymp4.1} respectively is a polynomial in $\mathbb{Q}[\pi^{-1}]$ of degree $2s$.
\end{lemma}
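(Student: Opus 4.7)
\emph{Strategy.} The plan is to prove both parts by strong induction on $s$, running them in parallel and following the Algorithm of Section 3.1 in the same way it was used for Theorem 4.1 and Lemma 4.5. The base cases $s=0,1,2$ are already supplied by Theorem 3.1 and Remark 1.5. Assuming that (1) and (2) both hold at every level $s'<s$, I first derive (1) at level $s$, and then feed the outcome through \eqref{Nre} and \eqref{Gre} to obtain (2).

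\emph{Polynomial structure of $e^s_{n,\mathbf{d}}$.} Following the algorithm I first pin down the difference coefficient $\widetilde{e}^s_{n,\mathbf{d}}$ of Proposition 4.2 and decompose it via \eqref{difference} into $V_1+V_2+V_3$. For $V_1$, I expand \eqref{Apoly}: the inductive hypothesis says that each $e^{s-1}_{n,\mathbf{d}(L,j)}$ is polynomial in $L+d_1+d_j,d_2,\dots,\widehat{d_j},\dots,d_n$ of degree $2(s-1)$ with coefficients in $\mathbb{Q}[\pi^{-1}]$ of degree at most $2(s-1)$. A binomial expansion in $L$ followed by Lemma \ref{Coeff-est}(4) converts each weighted sum $\sum_L L^m(\tfrac{a_L}{4^L}-\tfrac{a_{L-1}}{4^{L-1}})$ into a rational linear combination of $\pi^{-1},\pi,\pi^3,\dots,\pi^{2\lfloor m/2\rfloor-1}$, while the prefactor $(2d_j+1)$ raises the total $d$-degree by at most one. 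The analyses of $V_2$ via \eqref{BV} and of $V_3$ via \eqref{CV}--\eqref{Vratio2} are parallel, the latter applying the inductive hypothesis for (1) to both factors of the product and the already-known $a_n^{s'},b_n^{s'}$. Combining the three pieces produces $\widetilde{e}^s_{n,\mathbf{d}}$ as a polynomial of degree at most $2s-1$ in the $d_i$'s; the telescoping identity \eqref{summation} then lifts it to $e^s_{n,\mathbf{d}}$ as a polynomial of degree $2s$, and propagating the $\pi^{-1}$-structure through the summation recovers the claimed window from $\pi^{-2\lfloor(|\bm\alpha|+1)/2\rfloor}$ down to $\pi^{-2s}$ for the coefficient of $d_1^{\alpha_1}\cdots d_n^{\alpha_n}$.

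\emph{From $e^s_{n,\mathbf{d}}$ to $a_n^s$ and $b_n^s$.} With (1) at level $s$ in hand, formula \eqref{Ncalcu2} writes $a_n^s$ as a finite rational linear combination of the sums $\sum_{L\ge 0}(-1)^L(\pi/2)^{2L+1}L^m/(2L+1)!$, each of which lies in $\pi\mathbb{Q}[\pi^2]$ of $\pi^2$-degree $\lfloor m/2\rfloor$ by Remark 3.2. Since $e^s_{n,L}$ is polynomial in $L$ of degree $2s$ with coefficients in $\mathbb{Q}[\pi^{-1}]$ of degree at most $2s$, combining the two exhibits $a_n^s\in\mathbb{Q}[\pi^{-1}]$ of degree at most $2s$. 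For $b_n^s$ with $n\ge 2$ I use \eqref{Gre} and \eqref{Gcalcu2}: the $4[\tau_1\tau_0^{n+1}]^\Theta_g/V_{g,n+2}^\Theta$ term is handled exactly as above, while the product sum is rearranged through Lemma \ref{Vproduct} and \eqref{Vratio2} into an expression in the already-known $e^{s'},a_n^{s'},b_n^{s'}$ for $s'<s$. The boundary cases $n=0,1$ are then recovered from \eqref{n1}--\eqref{n0}.

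\emph{Main obstacle.} The $\pi^{-1}$-bookkeeping is where the real work lies. Lemma \ref{Coeff-est}(4) returns a mixture of negative and positive powers of $\pi$, so every application can simultaneously raise and lower the $\pi^{-1}$-degree, and one has to verify that after all compositions the highest $\pi^{-1}$-power appearing in $e^s_{n,\mathbf{d}}$ stays bounded by $2s$ and that the lowest $\pi^{-1}$-power carried by the monomial $d_1^{\alpha_1}\cdots d_n^{\alpha_n}$ does not fall below $2\lfloor(|\bm\alpha|+1)/2\rfloor$. The $V_3$ contribution is the heaviest case: its ratios from \eqref{Vratio2} are products of up to $2g'+n'$ already-$\mathbb{Q}[\pi^{-1}]$-valued factors, so I expect the proof to require a careful stratification by the partition data $(g_1,|I|),(g_2,|J|)$ in the spirit of Lemma \ref{Vproduct}, combined with a monomial-by-monomial tracking of both the highest and the lowest $\pi^{-1}$-degree throughout the induction.
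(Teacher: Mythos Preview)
Your proposal is correct and follows essentially the same approach as the paper: the paper likewise reduces part (1) to the analogous statement for $\widetilde{e}^s_{n,\mathbf{d}}$ via \eqref{summation}, runs an induction on $s$ by analyzing the contributions of $V_1,V_2,V_3$ through \eqref{Apoly}, \eqref{BV}, \eqref{CV}--\eqref{Vratio2} together with Lemma \ref{Coeff-est}(4), and then derives part (2) from \eqref{Ncalcu2}, Remark 3.2, and \eqref{Gre}--\eqref{Gcalcu2}. The monomial-by-monomial $\pi^{-1}$-degree tracking you flag as the main obstacle is exactly what the paper carries out explicitly for $V_1$ (and asserts analogously for $V_2,V_3$), so your identification of the crux is on target.
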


\noindent The existence of $e_{n,\mathbf{d}}^i$, $a_n^i$ and $b_n^i$ in \eqref{Tauasymp}-\eqref{Gasymp} is guaranteed by Theorem 4.1. The polynomial property of $e_{n,\mathbf{d}}^i$ follows by Lemma 4.6 (1). Also, the polynomial properties of $a_n^i$ and $b_n^i$ hold by Lemma 4.5 (2) and Lemma 4.6 (2).\qed

Now we prove Lemma 4.5 and Lemma 4.6 to complete the proof of Theorem 1.2.

\noindent\textbf{Proof of Lemma 4.5.}
The proof is again by induction on $s$. The cases $s=0,1$ for part (1) and part (2) hold in view of Theorem 3.1. Similar to the proof of Lemma 4.3 and Lemma 4.4, now by \eqref{Nre} and \eqref{Gre}, part (1) for $s=r$ implies part (2) for $s=r$ in view of \eqref{Ncalcu2} and \eqref{Gcalcu2}. Moreover, part (2) for $s<r$ and part (1) for $s<r$ imply part (1) for $s=r$. This follows from \eqref{summation} and Fact 4 (3) ((cf. Appendix A) by analyzing the contributions from $V_1$-$V_3$ in \eqref{difference} via \eqref{AV}-\eqref{CV}.\qed

\begin{remark}
Lemma 4.5 (1) shows that $e^s_{n,\mathbf{d}}$ can be viewed as a polynomial in $n$ with coefficients grow at most polynomially in $d_1,...,d_n$. So together with Fact 4 (2) (cf. Appendix A), we have for $0\leq j\leq s$,
$$\sum\limits_{L=0}\limits^{\infty}\Big(\frac{a_{L}}{4^L}-\frac{a_{L-1}}{4^{L-1}}\Big)e_{\mathbf{d}(L,i),j}<\infty,$$
where $\mathbf{d}(L,i)=(d_1+d_i+L,...,\hat{d_i},...,d_n).$
\end{remark}

\noindent\textbf{Proof of Lemma 4.6.} Similar to the equivalence between Theorem 4.1 (1) and Proposition 4.2 in view of \eqref{summation} and Fact 4 (3) (cf. Appendix A), Lemma 4.6 (1) is equivalent to the same statement for $\widetilde{e}^s_{n,\mathbf{d}}$ as below.
\begin{proposition} 
Given $s$ and for any fixed $n$, $\widetilde{e}^s_{n,\mathbf{d}}$ is a polynomial in $\mathbb{R}[d_1,...,d_n]$ of degree $2s-1$. Moreover, the coefficient $[d_1^{\alpha_1}\cdots d_n^{\alpha_n}]\widetilde{e}^s_{n,\mathbf{d}}$ is a linear rational combination of $\pi^{-2\lfloor \frac{|\bm{\alpha}|+1}{2}\rfloor}$,$\pi^{-2\lfloor \frac{|\bm{\alpha}|+1}{2}\rfloor-1}$,...,$\pi^{-2s}$, where $|\bm{\alpha}|=\alpha_1+\cdots+\alpha_n\leq 2s-1$.
\end{proposition}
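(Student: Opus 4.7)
The plan is to prove Proposition 4.8 by induction on $s$, carried out jointly with Lemma 4.6 (2). The base case $s=1$ follows from the explicit formula for $e^1_{n,\mathbf{d}}$ in Theorem 3.1: a direct computation gives
\[
\widetilde e^1_{n,\mathbf{d}} = e^1_{n,\mathbf{d}} - e^1_{n,(d_1+1,d_2,\ldots,d_n)} = \frac{8|\mathbf{d}| + 4n-2}{\pi^2},
\]
which is a degree-$1$ polynomial in the $d_i$'s whose constant term and linear coefficients all lie in $\pi^{-2}\mathbb{Q}$, matching the claim (with the allowed ranges being $\pi^0,\pi^{-1},\pi^{-2}$ for the constant term and $\pi^{-2}$ for the linear coefficients).

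For the inductive step, fix $s \ge 2$ and assume Proposition 4.8 and Lemma 4.6 (2) hold for all $s' < s$. The identity \eqref{difference} writes the numerator on the left of \eqref{ePoly11} as $V_1 + V_2 + V_3$, so $\widetilde e^s_{n,\mathbf{d}} = [g^{-s}]V_1 + [g^{-s}]V_2 + [g^{-s}]V_3$. For the $V_1$-piece, combining \eqref{V1} with \eqref{AV} gives
\[
[g^{-s}]V_1 = \sum_{j=2}^n(2d_j+1)\sum_{L=0}^\infty\Big(\frac{a_L}{4^L}-\frac{a_{L-1}}{4^{L-1}}\Big)\,Q_{j,L}(\mathbf{d}),
\]
where $Q_{j,L}(\mathbf{d})$ is a finite linear combination of products $a^{s_1}_{n-1}\cdot e^{s_2}_{n,\mathbf{d}(L,j)}$ with $s_1+s_2 = s-1$ and $\mathbf{d}(L,j) = (d_1+d_j+L,d_2,\ldots,\widehat{d_j},\ldots,d_n)$. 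By the inductive hypothesis, $Q_{j,L}(\mathbf{d})$ is a polynomial in $\mathbf{d}$ and $L$ of total degree at most $2(s-1)$ whose $\pi^{-1}$-coefficients have the asserted form. Expanding $(d_1+d_j+L)^k$ by the binomial theorem and invoking Lemma \ref{Coeff-est}(4), each sum $\sum_L L^m(\frac{a_L}{4^L} - \frac{a_{L-1}}{4^{L-1}})$ becomes $\pi^{-1}$ times a rational polynomial in $\pi^2$ of degree $\lfloor m/2\rfloor$; the prefactor $(2d_j+1)$ then raises the degree in $\mathbf{d}$ by one, producing a polynomial of degree at most $2s-1$. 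The $V_2$ contribution is treated identically using \eqref{BV}. For $V_3$, the product ratios $V_{g_1,|I|+1}^\Theta V_{g_2,|J|+1}^\Theta/V_{g,n}^\Theta$ are developed via \eqref{Vratio2} and Lemma \ref{Vproduct} into expansions in $a^i_k, b^i_k$ with $i < s$; since this ratio is already $O(1/g^2)$, \eqref{CV} needs only $e^i_{n,\mathbf{d}'}$ with $i \le s-2$, which is within the inductive range.

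The main obstacle is the precise $\pi^{-1}$-weight lower bound $2\lfloor(|\bm\alpha|+1)/2\rfloor$ on each monomial $d_1^{\alpha_1}\cdots d_n^{\alpha_n}$. The key is a parity phenomenon supplied by Lemma \ref{Coeff-est}(4): the sum $\sum_L L^m(a_L/4^L - a_{L-1}/4^{L-1})$ contains only odd powers of $\pi$ from $\pi^{-1}$ up to $\pi^{2\lfloor m/2\rfloor-1}$. Consequently, converting a factor $L^m$ inside $Q_{j,L}$ into a $\mathbf{d}$-independent coefficient costs, in $\pi^{-1}$-weight, at least the ceiling $\lceil m/2\rceil$ rather than the floor $\lfloor m/2\rfloor$: lowering the $\mathbf{d}$-degree by two and lowering it by one both consume one full $\pi^{-2}$. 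The cleanest bookkeeping is to assign every term appearing in $V_1,V_2,V_3$ a pair (polynomial degree in $\mathbf{d}$, minimum $\pi^{-1}$-weight), and then verify that the inequality $\mathrm{weight} \ge 2\lceil \mathrm{deg}/2\rceil$ is preserved under each of the three operations \eqref{V1}--\eqref{V3}. Combined with the inductive hypothesis on $e^i$ and on $a^i_k,b^i_k$ for $i<s$, and the fact that each extra factor of $1/g$ carries at least a further $\pi^{-1}$, this yields the desired monomial-wise bound, closing the induction for $\widetilde e^s_{n,\mathbf{d}}$, and hence, via \eqref{summation} together with Fact 4 (3), for $e^s_{n,\mathbf{d}}$ itself as demanded by Lemma 4.6 (1).
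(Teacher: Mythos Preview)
Your proposal is essentially correct and follows the same route as the paper's own argument (Claim~4.11): joint induction with Lemma~4.6~(2), decomposition of $\widetilde e^s_{n,\mathbf d}$ via $V_1,V_2,V_3$, use of Lemma~\ref{Coeff-est}~(4) to handle the $L$-sums, and parallel tracking of polynomial degree and $\pi^{-1}$-weight. One minor imprecision: your $Q_{j,L}$ is not literally a combination of products $a^{s_1}_{n-1}\cdot e^{s_2}$, since the prefactor $\frac{1}{(\pi/2)(2g-3+n)}$ in \eqref{V1} contributes its own $1/g$-expansion with an overall $\pi^{-1}$; the paper absorbs this into $\widetilde a^{j_2}_{n-1}$ rather than $a^{j_2}_{n-1}$, and it is this extra $\pi^{-1}$ that makes the lower weight bound close. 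Your final bookkeeping paragraph is more schematic than the paper's monomial-by-monomial verification for $V_1$, but the underlying parity mechanism you describe is exactly the one the paper exploits.
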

So we only need to prove Proposition 4.8 and Lemma 4.6 (2). The proof relies on induction on $s$ with the same techniques which are used in the proof of Theorem 4.1. We need the following three claims:

\begin{claim}
Part (2) of Lemma 4.6 for $s<r$ implies that $\displaystyle \left[\frac{1}{g^r}\right]\frac{V_{g_1,n_1+1}^\Theta\cdot V_{g-g_1,n-n_1+1}^\Theta}{V_{g,n}^\Theta}$ is a polynomial of degree at most $2r$ in $\mathbb{Q}[\pi^{-1}]$. Moreover, when $g_1$, $n_1$ and $s$ are fixed
$$\frac{V_{g_1,n_1+1}^\Theta\cdot V_{g-g_1，n-n_1+1}^\Theta}{V_{g,n}^\Theta}=\sum\limits_{k=2g_1+n_1+1}\limits^{s}\frac{c_{g_1,n_1}^k}{g^k}+\mathit{O}\left(\frac{1}{g^{s+1}}\right),$$
where $c_{g_1,n_1}^k$ is a polynomial of degree at most $2s$ in $\mathbb{Q}[\pi^{-1}]$.
\end{claim}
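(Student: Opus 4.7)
The plan is to expand the ratio $V_{g-g_1, n-n_1+1}^\Theta / V_{g, n}^\Theta$ by applying the telescoping identity \eqref{Vratio2} with $g' = g_1$ and $n' = n_1-1$, which factors
$$
\frac{V_{g-g_1,n-n_1+1}^\Theta}{V_{g,n}^\Theta} \;=\; \frac{1}{4^{g_1}}\cdot \mathcal{A}_g\cdot\mathcal{B}_g\cdot\mathcal{C}_g,
$$
where $\mathcal{A}_g$ is a product of $N := 2g_1+n_1-1$ ratios of the form $\frac{\frac{\pi}{2}(2g+O(1))V^\Theta_{\cdot, m}}{V^\Theta_{\cdot, m+1}}$, $\mathcal{B}_g$ is a product of $g_1$ ratios of the form $\frac{4V^\Theta_{g-1, m+2}}{V^\Theta_{g, m}}$, and $\mathcal{C}_g = \prod_{j=-n'+1}^{2g'}\frac{2}{\pi(2g-2g_1+n+j-3)}$ is the explicit rational function with $N$ factors. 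By the induction hypothesis (part (2) of Lemma 4.6 for $s<r$), each factor in $\mathcal{A}_g$ and $\mathcal{B}_g$ admits an asymptotic expansion $1 + \sum_{i=1}^{r-1}\alpha_i/g^i + O(g^{-r})$ whose coefficients $\alpha_i \in \mathbb{Q}[\pi^{-1}]$ have degree at most $2i$.

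Next, I use Fact 4 (cf.\ Appendix~A) to re-expand the shifted-argument series $1/(g-c)^i$ in pure inverse powers of $g$; since the binomial expansion has rational coefficients, the $\pi^{-1}$-degree bounds carry over unchanged. This rewrites $\mathcal{A}_g$ and $\mathcal{B}_g$ as series $1 + O(1/g)$ in $g^{-1}$ with coefficients in $\mathbb{Q}[\pi^{-1}]$ of the asserted degrees, accurate to order $r-1$. For $\mathcal{C}_g$, each of the $N$ factors equals $\frac{1}{\pi g}(1+O(g^{-1}))$ with rational correction coefficients, so $\mathcal{C}_g = \pi^{-N} g^{-N}$ times a power series in $g^{-1}$ with rational coefficients. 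Multiplying $\mathcal{A}_g \mathcal{B}_g \mathcal{C}_g$, the coefficient of $g^{-k}$ for $k\geq N$ is then a polynomial in $\pi^{-1}$ of degree at most $2(k-N) + N = 2k - N$.

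The final step is to multiply by the constant $V_{g_1, n_1+1}^\Theta$, which by \eqref{superint} lies in $\mathbb{Q}[\pi^2]$ of $\pi^2$-degree at most $g_1-1$. Combining with the $\pi^{-N}$-factor from $\mathcal{C}_g$, the net $\pi^{-1}$-degree of the $g^{-k}$-coefficient becomes at most $2k - N + 2(g_1-1) = 2k - n_1 - 1 \leq 2k \leq 2s$ (for $k\leq s$), giving $c_{g_1, n_1}^k \in \mathbb{Q}[\pi^{-1}]$ of degree $\leq 2s$ as claimed. The hardest part will be the meticulous degree-bookkeeping in $\pi^{-1}$ throughout these multiplications: one must verify that the positive powers of $\pi$ carried by $V_{g_1, n_1+1}^\Theta$ are uniformly absorbed by the negative powers accumulated from $\mathcal{C}_g$, which ultimately rests on the inequality $N \geq 2(g_1-1)$ and requires a separate treatment of the edge case $n_1 = 0$ (for instance, by a preliminary rewriting via Fact 3 to insert an additional factor $V_{g,n+1}^\Theta/V_{g,n}^\Theta$, shifting the indexing by one while keeping the degree bookkeeping under control).
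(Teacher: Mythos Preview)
Your approach is correct and is exactly the paper's own argument: the paper's proof of Claim~4.9 is the single line ``a simple consequence of \eqref{Vratio2} together with Lemma~4.6~(2) holding for $s<r$,'' and you have unpacked precisely that telescoping product and tracked the $\pi^{-1}$-degrees through the factors $\mathcal{A}_g,\mathcal{B}_g,\mathcal{C}_g$.

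Two minor corrections, neither of which affects the validity of your argument. First, multiplying by $V_{g_1,n_1+1}^\Theta=q\,\pi^{2(g_1-1)}$ (a \emph{positive} power of $\pi$) \emph{lowers} the $\pi^{-1}$-degree, so the net bound should be $2k-N-2(g_1-1)=2k-4g_1-n_1+3$, not $2k-N+2(g_1-1)$; your stated bound $2k-n_1-1$ is therefore not tight for $g_1\geq 2$, but it is still an upper bound and still $\leq 2k\leq 2s$, so the conclusion stands. Second, no special treatment of $n_1=0$ is needed: the formula \eqref{Vratio2} applies verbatim with $n'=n_1-1=-1$, and since $N=2g_1+n_1-1\geq 2g_1-1>2(g_1-1)$ for every $g_1\geq 1$, the $\pi^{-N}$ from $\mathcal{C}_g$ always absorbs the $\pi^{2(g_1-1)}$ from $V_{g_1,n_1+1}^\Theta$, leaving only nonpositive powers of $\pi$.
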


\begin{claim}
Part (1) of Lemma 4.6 for $s=r$ and part (2) of Lemma 4.6 for $s<r$ imply part (2) of Lemma 4.6 for $s=r$.
\end{claim}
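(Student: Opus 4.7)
The plan is to re-do the extraction of $a_n^r$ and $b_n^r$ from \eqref{Nre}, \eqref{Gre}, \eqref{n0}, \eqref{n1} carried out in Lemma 4.4, while tracking the $\pi$-dependence of every intermediate quantity. The additional inputs beyond the original derivation are the assumed part (1) of Lemma 4.6 at $s=r$ (which pins down the $\pi^{-1}$-degree of $e^r_{n,\mathbf{d}}$), the assumed part (2) of Lemma 4.6 at $s<r$, and Claim 4.9 (whose hypothesis is precisely the second of these).

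For $a_n^r$, Lemma 4.4's computation yields
\begin{equation*}
a_n^r \;=\; \sum_{L=0}^{\infty}\frac{(-1)^L(\pi/2)^{2L+1}}{(2L+1)!}\,e^r_{n,L},
\end{equation*}
where $e^r_{n,L}:=e^r_{n,(L,0,\ldots,0)}$. By part (1) of Lemma 4.6 at $s=r$, we may write $e^r_{n,L}=\sum_{\alpha=0}^{2r} c_{n,\alpha}L^\alpha$ with each $c_{n,\alpha}$ a $\mathbb{Q}$-linear combination of consecutive integer powers of $\pi$ whose exponents lie in $[-2r,\,-2\lfloor(\alpha+1)/2\rfloor]$. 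Remark 3.2 tells us that $S_\alpha:=\sum_{L\ge 0}(-1)^L(\pi/2)^{2L+1}L^\alpha/(2L+1)!$ is a $\mathbb{Q}$-polynomial in $\pi^2$ of degree at most $\lfloor\alpha/2\rfloor$. Consequently the exponents of $\pi$ appearing in $c_{n,\alpha}S_\alpha$ lie in $[-2r,\,-2\lfloor(\alpha+1)/2\rfloor+2\lfloor\alpha/2\rfloor]$; the right endpoint equals $0$ when $\alpha$ is even and $-2$ when $\alpha$ is odd, so no positive power of $\pi$ arises. Summing over $\alpha$ shows $a_n^r\in\mathbb{Q}[\pi^{-1}]$ of degree at most $2r$.

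For $b_n^r$ with $n\ge 2$, apply the analogous $g^{-r}$-extraction to \eqref{Gre} (with the shift $n\mapsto n+2$). The first term $4[\tau_1\tau_0^{n+1}]_g^\Theta/V^\Theta_{g,n+2}$ contributes $e^r_{n+2,(1,0,\ldots,0)}$, which lies in $\mathbb{Q}[\pi^{-1}]$ of degree at most $2r$ by the assumed part (1) of Lemma 4.6 at $s=r$. The volume-product term has its $g^{-r}$-coefficient in $\mathbb{Q}[\pi^{-1}]$ of degree at most $2r$ by Claim 4.9, with Lemma \ref{Vproduct} guaranteeing that only finitely many $(g_1,g_2,I,J)$ contribute to that coefficient. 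For the exceptional values $n=0,1$, expand \eqref{n0} and \eqref{n1} as finite products of formal series in $g^{-1}$; each factor is a ratio whose expansion coefficients lie in $\mathbb{Q}[\pi^{-1}]$ of the required degree (either from cases of part (2) already handled, or via formal inversion of a series with constant term $1$, which preserves the $\pi$-degree bound term by term). The main obstacle is the parity-sensitive $\pi$-bookkeeping for $a_n^r$: only the sharp $\pi^{-2\lfloor(|\alpha|+1)/2\rfloor}$ upper bound in part (1) of Lemma 4.6 (rather than the cruder $\pi^0$ bound) is strong enough to cancel the $\pi^{2\lfloor\alpha/2\rfloor}$ leading contribution of $S_\alpha$ and keep $a_n^r$ free of positive powers of $\pi$; a weaker bound on $e^r$ would immediately allow $\pi^{2r}$ to appear.
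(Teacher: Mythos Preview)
Your proof is correct and follows essentially the same route as the paper's: for $a_n^r$ you use the formula \eqref{Ncalcu2} together with Lemma~4.6(1) at $s=r$ and Remark~3.2, and for $b_n^r$ you use \eqref{Gre} with Lemma~4.6(1) at $s=r$ for the $4[\tau_1\tau_0^{n+1}]_g^\Theta/V^\Theta_{g,n+2}$ term and Claim~4.9 for the volume-product term. Your treatment is somewhat more explicit than the paper's in spelling out the parity-sensitive $\pi$-bookkeeping (the computation $-2\lfloor(\alpha+1)/2\rfloor+2\lfloor\alpha/2\rfloor\in\{0,-2\}$) and in handling the boundary cases $n=0,1$ via \eqref{n0}--\eqref{n1}, but the underlying argument is the same.
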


\begin{claim}
Part (1)-(2) of Lemma 4.6 for $s<r$ imply Proposition 4.8 for $s=r$.
\end{claim}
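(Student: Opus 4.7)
The plan is to carry out the step-$4$ analysis of the Algorithm (Section 3.1) in the decomposition \eqref{difference},
$$\frac{4^{|\mathbf{d}|}\bigl([\tau_{d_1}\cdots\tau_{d_n}]^\Theta_g-4[\tau_{d_1+1}\tau_{d_2}\cdots\tau_{d_n}]^\Theta_g\bigr)}{V_{g,n}^\Theta}=V_1+V_2+V_3,$$
and to extract the coefficient of $1/g^r$ in each of $V_1,V_2,V_3$, verifying on the fly that it is a polynomial in $d_1,\dots,d_n$ of degree at most $2r-1$ whose monomial coefficients $[d_1^{\alpha_1}\cdots d_n^{\alpha_n}]$ are linear rational combinations of $\pi^{-2\lfloor(|\bm\alpha|+1)/2\rfloor},\dots,\pi^{-2r}$ with $|\bm\alpha|\le 2r-1$. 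Throughout, I will treat $d_1$ as the distinguished variable produced by formula $(\mathbf{\Rmnum{3}})$ and record how each multiplication and each summation over $L$ modifies the degree and the $\pi^{-1}$-profile.

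For $V_1$, I will expand $\frac{4^{|\mathbf{d}|}A_{\mathbf{d},g,n}}{V^\Theta_{g,n-1}}$ in \eqref{AV} to order $1/g^{r-1}$ by plugging in Lemma 4.6(1) for $s<r$ applied to $e^i_{n-1,\mathbf{d}(L,j)}$, where $\mathbf{d}(L,j)=(d_1+d_j+L,d_2,\dots,\widehat{d_j},\dots,d_n)$. Each such $e^i$ is polynomial of degree $2i$ in the $d_\ast$'s and in $L$ with the stated $\pi^{-1}$-profile; after performing the sum $\sum_{L\ge 0}\bigl(\tfrac{a_L}{4^L}-\tfrac{a_{L-1}}{4^{L-1}}\bigr)L^j$, Lemma \ref{Coeff-est}(4) shows the result is $\pi^{-1}$ times a rational polynomial in $\pi^2$ of degree $\lfloor j/2\rfloor$, which precisely matches the required $\pi^{-1}$-profile when combined with the polynomial in $\pi^{-1}$ that is already present. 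Multiplying by $(2d_j+1)$ and summing over $j$ raises the $d$-degree by one, giving polynomial of degree $2i+1$ at the $i$-th order. Finally I multiply by the expansion of $\frac{1}{\frac{\pi}{2}(2g-3+n)}\cdot\frac{\frac{\pi}{2}(2g-3+n)V^\Theta_{g,n-1}}{V^\Theta_{g,n}}$ (whose coefficients lie in $\mathbb{Q}[\pi^{-1}]$ of degree $\le 2(r-1)$ by Lemma 4.6(2) for $s<r$) and read off the coefficient of $1/g^r$; collecting degrees confirms it is polynomial of degree $\le 2r-1$ in $d_i$ with the prescribed $\pi^{-1}$-profile. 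The treatment of $V_2$ via \eqref{BV} is analogous, with the only new input being the expansion of $\frac{4V^\Theta_{g-1,n+2}}{V^\Theta_{g,n}}$ (Lemma 4.6(2) again).

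For $V_3$, I will combine Claim 4.9 with the expansion of $\frac{4^{|\mathbf{d}|}C_{\mathbf{d},g,n}}{V^\Theta_{g^{'},|I|+1}\cdot V^\Theta_{g-g^{'},|J|+1}}$ from \eqref{CV}. Claim 4.9 tells me that, for each fixed $(g^{'},|I|)$ with $2g^{'}+|I|+1\ge 1$, the ratio of volumes contributes at order $1/g^{r}$ only through finitely many $(g^{'},|I|)$ (those with $2g'+|I|+1\le r$ and symmetrically in $J$), and each such contribution has coefficients in $\mathbb{Q}[\pi^{-1}]$ of degree $\le 2r$. The inner factor, after applying Lemma 4.6(1) to each of $[\tau_{c_1}\prod_{i\in I}\tau_{d_i}]^\Theta_{g'}$ and $[\tau_{c_2}\prod_{j\in J}\tau_{d_j}]^\Theta_{g-g'}$ (via the identity \eqref{summation}) and then summing over $c_1+c_2=L+d_1-1$ and over $L$ with the $a_L$-weights, produces polynomials in $d_i$ of the required degree by the same Lemma \ref{Coeff-est}(4) mechanism used for $V_1$; the degree bookkeeping works because $e^i$ has degree $2i$ and an extra sum $\sum L^j$ raises this by one.

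The main obstacle, as in the proofs of Theorem 4.1 and Lemma 4.5, is organizing the combinatorics so that each step simultaneously respects the degree bound in $d_1,\dots,d_n$ \emph{and} the prescribed $\pi^{-1}$-profile. The key quantitative lemma doing the heavy lifting is Lemma \ref{Coeff-est}(4), which converts a weighted sum of the polynomial coefficients $e^i_{n-1,\mathbf{d}(L,j)}$ in $L$ into a polynomial in $\pi^2$ of the exact degree needed, so that the alignment $\lfloor(|\bm\alpha|+1)/2\rfloor$ appearing in Proposition 4.8 is preserved. Once this profile is verified term by term in $V_1,V_2,V_3$, summing the three contributions yields $\widetilde{e}^r_{n,\mathbf{d}}$ and completes the claim.
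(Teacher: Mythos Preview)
Your proposal is correct and follows essentially the same approach as the paper's own proof: both decompose $\widetilde{e}^r_{n,\mathbf{d}}$ into the contributions from $V_1,V_2,V_3$ via \eqref{V1}--\eqref{CV}, apply Lemma~4.6(1)--(2) for $s<r$ inductively, and use Lemma~\ref{Coeff-est}(4) to convert the $L$-weighted sums into polynomials in $\pi^2$ of the correct degree so that the $\pi^{-1}$-profile $\pi^{-2\lfloor(|\bm\alpha|+1)/2\rfloor},\dots,\pi^{-2r}$ is preserved. One small imprecision: in your treatment of $V_3$ you say you apply Lemma~4.6(1) to \emph{both} factors $[\tau_{c_1}\prod_{i\in I}\tau_{d_i}]^\Theta_{g'}$ and $[\tau_{c_2}\prod_{j\in J}\tau_{d_j}]^\Theta_{g-g'}$, but after truncation one of $g'$ or $g-g'$ is bounded, so Lemma~4.6(1) applies only to the large-genus factor; the small-genus factor is a fixed rational number by Fact~4(1), which is what the paper uses (via \eqref{Vratio2}) and which equally fits the $\pi^{-1}$-profile bookkeeping.
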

Assume the three claims above hold. Note that Lemma 4.6 (1) is equivalent to Proposition 4.8 and Lemma 4.6 for $s=0,1$ hold in view of Theorem 3.1. So Lemma 4.6 follows by Claim 4.9-4.11.\qed

Now we prove the three claims above.

\noindent\textbf{Proof of Claim 4.9.}
This is a simple consequence of \eqref{Vratio2} together with Lemma 4.6 (2) holding for $s<r$.\qed

\noindent\textbf{Proof of Claim 4.10.}
First, we analyze $a_n^r$ by \eqref{Ncalcu2}. Note that Lemma 4.6 (1) for $s=r$ implies that each element $[L^i]e^{r}_{n,L}$ is a linear rational combination of $\pi^{-2\lfloor\frac{i+1}{2}\rfloor}$,...,$\pi^{-2r}$ for $0\leq i\leq 2s$. So in view of Remark 3.2, we deduce that $a_n^r$ is a rational polynomial of degree at most $2r$ in $\mathbb{Q}[\pi^{-1}]$. The similar statement for $b_n^r$ follows from \eqref{Gre}, Lemma 4.6 (1) for $s=r$, $\mathbf{d}=(1,0,...,0)$ and Claim 4.9. \qed

\noindent\textbf{Proof of Claim 4.11.}
Following the fourth step of the Algorithm in Section 3.1, now we analyze $$\frac{4^{|\mathbf{d}|}\left([\tau_{d_1}\tau_{d_2}\cdots\tau_{d_n}]_g^\Theta-4[\tau_{d_1+1}\tau_{d_2}\cdots\tau_{d_n}]_g^\Theta\right)}{V_{g,n}^\Theta}$$ by evaluating contributions from $V_1$, $V_2$, $V_3$. Here $\widetilde{e}^{0,i}_{n,\mathbf{d}}=0$ and the expansion of $V_i$ is $$V_i=\frac{\widetilde{e}^{1,i}_{n,\mathbf{d}}}{g}+\cdots+\frac{\widetilde{e}^{s,i}_{n,\mathbf{d}}}{g^{s}}+\mathit{O}\left(\frac{1}{g^{s+1}}\right).$$

\noindent$\bullet$\textbf{Contribution from $V_1$.}
By \eqref{V1} and \eqref{Apoly}, we get $$\widetilde{e}^{r,1}_{n,\mathbf{d}}=\sum\limits_{j_1+j_2=r}q^{j_1,1}(\mathbf{d})\widetilde{a}_{n-1}^{j_2},$$
where $$q^{j_1,1}(\mathbf{d})=\sum\limits_{j=2}\limits^{n}(2d_j+1)\sum\limits_{L=0}\limits^{\infty}\left(\frac{a_L}{4^{L}}-\frac{a_{L-1}}{4^{L-1}}\right)e^{j_1-1}_{n,\mathbf{d}(L,j)}$$
with $\mathbf{d}(L,j)=(d_1+d_j+L,d_2,...,\widehat{d}_j,...,d_n)$ and $$\widetilde{a}_{n-1}^{j_2}=\left[\frac{1}{g^{j_2}}\right]\left(\frac{1}{\frac{\pi}{2}(2g-3+n)}\cdot\frac{\frac{\pi}{2}(2g-3+n)V_{g,n-1}^\Theta}{V_{g,n}^\Theta}\right).$$
By part (2) of Lemma 4.6 for $s<r$, $\widetilde{a}_{n-1}^{j_2}$ for $1\leq j_2\leq r$ is a rational linear combination of $\pi^{-1}$,$\pi^{-2}$,...,$\pi^{-2j_2+1}$.

On the other hand, by part (1) of Lemma 4.6 for $s<r$ and Fact 4 (2) (cf. Appendix A), we deduce that for $j_1\leq r$, $q^{j_1,1}(\mathbf{d})$ is again a polynomial in $\mathbb{R}[d_1,...,d_n]$ of degree $2j_1-1$. Therefore, $$\widetilde{e}^{r,1}_{n,\mathbf{d}}=\sum\limits_{j_1+j_2=r}q^{j_1,1}(\mathbf{d})\widetilde{a}_{n-1}^{j_2}$$
is also a polynomial in $\mathbb{R}[d_1,...,d_n]$ of $2j_1-1$.

By part (1) of Lemma 4.6 for $s<r$, the coefficient $[d_1^{\alpha_1}\cdots d_n^{\alpha_n}]q^{j_1,1}(\mathbf{d})$ is a rational linear combination of the form  with $\mathbf{x}=(d_1+d_j+L,d_2,...,\hat{d_j},...,d_n)$ $$c_{\bm{\alpha}(j,r)}\sum\limits_{L=0}\limits^{\infty}L^r\left(\frac{a_L}{4^L}-\frac{a_{L-1}}{4^{L-1}}\right),$$
where $c_{\bm{\alpha}(j,r)}=[x_1^{r+\alpha_1+\alpha_j}x_2^{\alpha_2}\cdots\widehat{x_j^{\alpha_j}}\cdots x_n^{\alpha_n}]e^{j_1-1}_{n,\mathbf{x}}$
and $$d_{\bm{\alpha}(j,r)}\sum\limits_{L=0}\limits^{\infty}L^r\left(\frac{a_L}{4^L}-\frac{a_{L-1}}{4^{L-1}}\right),$$
where $d_{\bm{\alpha}(j,r)}=[x_1^{r+\alpha_1+\alpha_j-1}x_2^{\alpha_2}\cdots\widehat{x_j^{\alpha_j}}\cdots x_n^{\alpha_n}]e^{j_1-1}_{n,\mathbf{x}}$.
Note that Lemma \ref{Coeff-est} (2)-(4) implies that for $r\in\mathbb{Z}_{\geq 0}$, $$\pi\sum\limits_{L=0}\limits^{\infty}L^r\left(\frac{a_L}{4^L}-\frac{a_{L-1}}{4^{L-1}}\right)$$ is a rational linear combination of $\pi^{2\lfloor r/2\rfloor}$, $\pi^{2\lfloor r/2\rfloor-2}$,...,$1$. Moreover, part (1) of Lemma 4.6 for $s<r$ implies that $c_{\bm{\alpha}(j,r)}$ is a rational linear combination of $\pi^{-2\lfloor \frac{|\bm{\alpha}|+1+r}{2}\rfloor}$,$\pi^{-2\lfloor \frac{|\bm{\alpha}|+1+r}{2}\rfloor-1}$,...,$\pi^{-2j_1+2}$ and $d_{\bm{\alpha}(j,r)}$ is a rational linear combination of $\pi^{-2\lfloor \frac{|\bm{\alpha}|+r}{2}\rfloor}$,$\pi^{-2\lfloor \frac{|\bm{\alpha}|+r}{2}\rfloor-1}$,...,$\pi^{-2j_1+2}$. Therefore, $[d_1^{\alpha_1}\cdots d_n^{\alpha_n}]q^{j_1,1}(\mathbf{d})$ is a rational linear combination of $\pi^{-2\lfloor \frac{|\bm{\alpha}|+1}{2}\rfloor}$,$\pi^{-2\lfloor \frac{|\bm{\alpha}|+1}{2}\rfloor-1}$,...,$\pi^{-2j_1+1}$. Hence, combining with the information of $\widetilde{a}^{j_2}_{n-1}$ for $1\leq j_2\leq r$, it follows that $[d_1^{\alpha_1}\cdots d_n^{\alpha_n}]\widetilde{e}^{r,1}_{n,\mathbf{d}}$ is a rational combination of $\pi^{-2\lfloor \frac{|\bm{\alpha}|+1}{2}\rfloor}$,$\pi^{-2\lfloor \frac{|\bm{\alpha}|+1}{2}\rfloor-1}$,...,$\pi^{-2r}$.

Therefore, we already proved that $\widetilde{e}^{r,1}_{n,\mathbf{d}}$ satisfies Proposition 4.8 for $s=r$. 

\noindent$\bullet$\textbf{Contribution from $V_2$.} In the similar way, we use \eqref{V2} and part (1)-(2) of Lemma 4.6 for $s<r$ to get the expansion of $\widetilde{e}^{r,2}_{n,\mathbf{d}}$.Then we use Lemma \ref{Coeff-est} and Lemma 4.6 (1)-(2) for $s<r$ to verify the properties of $\widetilde{e}^{r,2}_{n,\mathbf{d}}$ stated in Proposition 4.8 for $s=r$ via the same method used in the former part.

\noindent$\bullet$\textbf{Contribution from $V_3$.} Similarly, in this case, we use \eqref{V3} to get the expansion of $\widetilde{e}^{r,3}_{n,\mathbf{d}}$. Note that Lemma \ref{Vproduct} implies that
$$\frac{\sum_{\substack{1\leq g^{'}\leq g-1\\[3pt]I\sqcup J=\{2,...,n\}}} V_{g^{'},|I|+1}^\Theta\cdot V_{g-g^{'},|J|+1}^\Theta}{V_{g,n}^\Theta}$$
is of order at least $\mathit{O}(1/g^{2})$ and it is of order $(1/g^{r+1})$ unless $ 2g^{'}+|I|\leq r$ or $2(g-g^{'})+|J|\leq r$.
Then by \eqref{Vratio2} and part (1)-(2) of Lemma 4.6 for $s<r$, we obtain the expansion of $\widetilde{e}^{r,3}_{n,\mathbf{d}}$. In the end, by Lemma \ref{Coeff-est} and Lemma 4.6 (1)-(2) for $s<r$, we check the properties of $\widetilde{e}^{r,3}_{n,\mathbf{d}}$ stated in Proposition 4.8 for $s=r$ via the same method in analyzing the contribution from $V_1$.\qed

\begin{remark} From Claim 4.11, we point out the polynomial properties of $V_i$ for $i=1,2,3$, that is, there exist polynomials $\widetilde{p}_{s,i}$, $i=1,2,3$ such that:
\begin{itemize}[leftmargin=2em]
\item [(1)] For any fixed $n$, the coefficient $[1/g^s]V_i=\widetilde{p}_{s,i}(d_1,...,d_n)$;
\item [(2)] The coefficient $[d_1^{\alpha_1}\cdots d_n^{\alpha_n}]\widetilde{p}_{s,i}$ is a linear rational combination of $\pi^{-2\lfloor\frac{|\bm{\alpha}|+1}{2} \rfloor}$,\\$\pi^{-2\lfloor\frac{|\bm{\alpha}|+1}{2} \rfloor-1}$,...,$\pi^{-2s}$, where $|\bm{\alpha}|=\alpha_1+\cdots+\alpha_n$.
\end{itemize}
\end{remark}

\vskip 20pt
\section{Proof of Theorem \ref{superWP} and Theorem \ref{GPRSConj}}
\setcounter{equation}{0}
From Theorem \ref{Polythm} and Remark \ref{Coeffs}, we get
\begin{corollary}
Given $n\geq 1$, there exist $g_0$ and $g_1$ such that the following sequence  $$\bigg\{\frac{4V_{g-1,n+2}^\Theta}{V_{g,n}^\Theta}\bigg\}_{g\geq g_0}.$$is increasing and the sequence $$\bigg\{\frac{\frac{\pi}{2}(2g-2+n)V^\Theta_{g,n}}{V_{g,n+1}^\Theta}\bigg\}_{g\geq g_1}$$ is decreasing.
\end{corollary}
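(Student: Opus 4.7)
The plan is to derive both monotonicity statements as direct consequences of the second-order asymptotic expansions in Theorem \ref{Polythm}(2), together with the explicit leading coefficients $a^1_n$ and $b^1_n$ recorded in Remark \ref{Coeffs}. The guiding idea is simple: each ratio tends to $1$ as $g\to\infty$, and the sign of the first-order correction in $1/g$ is strictly determined for $n\geq 1$, so the direction of monotonicity is forced for all sufficiently large $g$.

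First I would handle the sequence $f(g):=4V^{\Theta}_{g-1,n+2}/V^{\Theta}_{g,n}$. Theorem \ref{Polythm}(2) applied with $s=2$ gives
\begin{equation*}
f(g)=1+\frac{b^{1}_{n}}{g}+\frac{b^{2}_{n}}{g^{2}}+O\!\Big(\frac{1}{g^{3}}\Big),
\end{equation*}
where Remark \ref{Coeffs} yields $b^{1}_{n}=-(4n-2)/\pi^{2}$, which is strictly negative for every $n\geq 1$. Expanding $1/(g+1)^{k}-1/g^{k}=-k/g^{k+1}+O(1/g^{k+2})$ and collecting powers of $1/g$ in the forward difference gives
\begin{equation*}
f(g+1)-f(g)=-\frac{b^{1}_{n}}{g^{2}}+O\!\Big(\frac{1}{g^{3}}\Big)=\frac{4n-2}{\pi^{2}g^{2}}+O\!\Big(\frac{1}{g^{3}}\Big),
\end{equation*}
which is strictly positive once $g$ exceeds some threshold $g_{0}=g_{0}(n)$.

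Next I would treat the sequence $h(g):=\tfrac{\pi}{2}(2g-2+n)V^{\Theta}_{g,n}/V^{\Theta}_{g,n+1}$ in exactly the same way. Theorem \ref{Polythm}(2) combined with $a^{1}_{n}=(8n-8+\pi^{2})/(4\pi^{2})$, which is strictly positive for every $n\geq 1$ (its smallest value $\pi^{2}/(4\pi^{2})=1/4$ being attained at $n=1$), gives
\begin{equation*}
h(g+1)-h(g)=-\frac{a^{1}_{n}}{g^{2}}+O\!\Big(\frac{1}{g^{3}}\Big)<0
\end{equation*}
for all $g\geq g_{1}=g_{1}(n)$.

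There is essentially no obstacle here: the corollary is an immediate consequence of the existence of the expansion and the explicit computation of its leading correction. The only point worth being explicit about is that the implicit constants in the $O$-terms depend on $n$ alone, which is automatic from the uniformity in $g$ of the expansion in Theorem \ref{Polythm}. Since $4n-2\geq 2$ and $a^{1}_{n}\geq 1/4$ are uniformly bounded away from zero on $n\geq 1$, both thresholds $g_{0}$ and $g_{1}$ are finite for every fixed $n$, as required.
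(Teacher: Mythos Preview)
Your proof is correct and is exactly the argument the paper has in mind: the paper presents the corollary without proof, simply prefacing it with ``From Theorem \ref{Polythm} and Remark \ref{Coeffs}, we get,'' and your explicit computation of the forward differences using the $s=2$ expansion and the signs of $a^1_n$, $b^1_n$ is precisely how one fills in that omitted step.
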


Since
\begin{equation}\label{Vratio}
\frac{V_{g+1,n}^{\Theta}}{V_{g,n}^{\Theta}}=\frac{V_{g+1,n}^{\Theta}}{4V_{g,n+2}^{\Theta}}\cdot\frac{V_{g,n+2}^{\Theta}}{\frac{\pi}{2}(2g-1+n)V_{g,n+1}^{\Theta}}\cdot\frac{V_{g,n+1}^{\Theta}}{\frac{\pi}{2}(2g-2+n)V_{g,n}^{\Theta}}\pi^2(2g-1+n)(2g-2+n),
\end{equation}
by Remark \ref{Coeffs} and Fact 3 (cf. Appendix A), \eqref{Nasymp} and \eqref{Gasymp} immediately yield
\begin{corollary}
Let $n\geq 0$ be fixed, then $$\frac{V_{g+1,n}^{\Theta}}{V_{g,n}^{\Theta}}=\pi^2(2g-1+n)(2g-2+n)\bigg(1-\frac{1}{2g}\bigg)\cdot\bigg(1+\frac{c_n^2}{g^2}+\cdots+\frac{c_n^k}{g^k}+\mathit{O}\Big(\frac{1}{g^{k+1}}\Big)\bigg),$$ as $g\to\infty$, where $$c_n^2=\frac{n}{4}-\frac{7}{32}+\frac{1}{8\pi}+\frac{1}{\pi^2}\left(n^2-3n+\frac{25}{8}\right).$$
\end{corollary}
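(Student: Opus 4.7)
The plan is to apply the product formula \eqref{Vratio}, which expresses $V_{g+1,n}^{\Theta}/V_{g,n}^{\Theta}$ as the polynomial prefactor $\pi^{2}(2g-1+n)(2g-2+n)$ times three ratios that are reciprocals of quantities already controlled by Theorem \ref{Polythm}: namely, the reciprocal of $4V_{g,n+2}^{\Theta}/V_{g+1,n}^{\Theta}$ (which is \eqref{Gasymp} with $g\mapsto g+1$), of $\frac{\pi}{2}(2g-1+n)V_{g,n+1}^{\Theta}/V_{g,n+2}^{\Theta}$ (which is \eqref{Nasymp} with $n\mapsto n+1$, since $\frac{\pi}{2}(2g-1+n)=\frac{\pi}{2}(2g-2+(n+1))$), and of $\frac{\pi}{2}(2g-2+n)V_{g,n}^{\Theta}/V_{g,n+1}^{\Theta}$ (which is \eqref{Nasymp} itself). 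Existence of a full asymptotic expansion to any order $1/g^{k+1}$ is therefore automatic from Theorem \ref{Polythm} combined with Fact 1 and Fact 3 of Appendix A (multiplication and reciprocation of asymptotic series in $1/g$), as pointed out in the remark preceding the corollary. What is left is to identify the factor $(1-1/(2g))$ and to compute $c_{n}^{2}$.

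First I would re-expand the shifted $b$-series using $1/(g+1)^{j}=g^{-j}-jg^{-j-1}+O(g^{-j-2})$; at order $1/g^{2}$ this simply replaces $b_{n}^{2}$ by $b_{n}^{2}-b_{n}^{1}$. Then I would invert each of the three series by Fact 3 and multiply the three inverses together by Fact 1, obtaining an asymptotic series $1+A_{1}/g+A_{2}/g^{2}+O(1/g^{3})$. The decisive algebraic identity is
\[
A_{1}=-(b_{n}^{1}+a_{n}^{1}+a_{n+1}^{1})=-\frac{1}{2},
\]
in which the $\pi^{-2}$ pieces $(2-4n)/\pi^{2}+(2n-2)/\pi^{2}+2n/\pi^{2}$ cancel exactly, leaving only $-\frac{1}{4}-\frac{1}{4}$ from the rational parts of $a_{n}^{1}$ and $a_{n+1}^{1}$ listed in Remark \ref{Coeffs}. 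Consequently the triple product takes the shape $(1-1/(2g))\bigl(1+c_{n}^{2}/g^{2}+\cdots\bigr)$: indeed, $1/(1-1/(2g))$ expands as $1+1/(2g)+1/(4g^{2})+\cdots$, and the two $1/(4g^{2})$ terms arising from this expansion and from the cross-term $A_{1}\cdot\frac{1}{2g}$ cancel exactly, leaving $c_{n}^{2}=A_{2}$.

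The final step is to evaluate $A_{2}$ by summing the three inversion contributions $(b_{n}^{1})^{2}-b_{n}^{2}+b_{n}^{1}$, $(a_{n+1}^{1})^{2}-a_{n+1}^{2}$, $(a_{n}^{1})^{2}-a_{n}^{2}$ together with the three cross-products $b_{n}^{1}a_{n+1}^{1}+b_{n}^{1}a_{n}^{1}+a_{n+1}^{1}a_{n}^{1}$, and substituting the explicit values from Remark \ref{Coeffs}. The main obstacle is not conceptual but arithmetic: Lemma 4.6 guarantees each of $a_{n}^{2},a_{n+1}^{2},b_{n}^{2}$ is a polynomial of degree $4$ in $\pi^{-1}$, so $A_{2}$ a priori contains $\pi^{-4}$ terms; these must cancel to produce a polynomial of degree only $2$ in $\pi^{-1}$, namely the stated $c_{n}^{2}=n/4-7/32+1/(8\pi)+(n^{2}-3n+25/8)/\pi^{2}$. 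This cancellation is the one point requiring careful bookkeeping; it should emerge naturally from the explicit formulae in Remark \ref{Coeffs}, and no tools beyond the algorithm of Section 3.1 are needed to extract the higher $c_{n}^{k}$ in the same fashion.
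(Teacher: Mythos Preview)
Your proposal is correct and follows essentially the same route as the paper: the paper simply invokes \eqref{Vratio}, Theorem \ref{Polythm}, Remark \ref{Coeffs}, and Fact 3, leaving the actual arithmetic to the reader, whereas you have spelled out the shift $g\mapsto g+1$ in the $b$-series, the inversion of the three factors, and the key cancellation $b_n^1+a_n^1+a_{n+1}^1=\tfrac12$ that produces the factor $(1-1/(2g))$. One small correction: Fact 1 of Appendix A concerns limits of sums $\sum r_i c_{g,i}$, not multiplication of asymptotic series; the latter is elementary and needs no named lemma, so you should just cite Fact 3 (for reciprocation) together with straightforward multiplication.
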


Since $\prod_{g=1}^\infty(1+a_g)$ converges when $a_g=\mathit{O}(1/g^2)$. Therefore, Corollary 5.2 implies that there exists a universal constant $C\in (0,\infty)$ such that
\begin{equation}\label{Const}
\lim\limits_{g\to\infty}\frac{2^n\sqrt{g}V_{g,n}^{\Theta}}{(2g-3+n)!\pi^{2g+n}}=C.
\end{equation}
Moreover, in view of Fact 2 (cf. Appendix A), we get Theorem  \ref{superWP}. 

\textbf{Proof of Theorem \ref{superWP}} For $n=0$, we have
$$V_{g,0}^\Theta=V_{2,0}^\Theta\prod\limits_{j=2}\limits^{g-1}\frac{V_{j+1,0}^\Theta}{V_{j,0}^\Theta}.$$
Then, from Corollary 5.2 and Fact 2 (cf. Appendix A), we have
\begin{equation}\label{Vg0}V_{g,0}^{\Theta}=C\frac{(2g-3)!\pi^{2g}}{\sqrt{g}}\left(1+\frac{d_0^1}{g}+\cdots+\frac{d_0^k}{g^k}+\mathit{O}\Big(\frac{1}{g^{k+1}}\Big)\right),\ \ \text{as}\ g\to\infty.\end{equation}
Denote \begin{equation}\label{Cgn}C_{g,n}:=\frac{(2g-3+n)!\pi^{2g+n}}{2^n\sqrt{g}},\end{equation} then$$C=\lim\limits_{g\to\infty}\frac{V_{g,0}}{C_{g,0}}.$$
On the other hand, 
$$\frac{V_{g,n}^{\Theta}}{C\cdot C_{g,n}}=\frac{V^\Theta_{g,0}}{C\cdot C_{g,0}}\prod\limits_{j=0}\limits^{n-1}\frac{V_{g,j+1}^\Theta}{\frac{\pi}{2}(2g-2+j)V_{g,j}}.$$
Then by Fact 5 (cf. Appendix A), Fact 3 (cf. Appendix A) and \eqref{Nasymp},
Theorem \ref{superWP} follows for $n\geq 1$. 

As for the coefficient of $a_n^i$ at $n^{2i}$, note that \eqref{Nasymp}, Remark \ref{Coeffs} and Fact 3 (cf. Appendix A) imply $l=-\frac{2}{\pi^2}$. Then by Fact 5 (cf. Appendix A), we get it.\qed

\textbf{Proof of Theorem \ref{GPRSConj}}
Recall that for any given $\alpha\in\mathbb{R}$, as $x\to\infty,$ we have
$$\Gamma(x+\alpha)\sim\Gamma(x)x^{\alpha}.$$
So when $n\geq 1$ is fixed and as $g\to\infty$, we get 
$$\frac{\Gamma \left(2g+n-\frac{5}{2}\right)\sqrt{2g}}{(2g+n-3)!}\sim\frac{\sqrt{2g}}{\sqrt{2g+n-2}}\sim1.$$
Note that $$\lim\limits_{g\to\infty}\frac{c(n)\cdot\sum\limits_{i=1}\limits^{n}L_i^2}{g}=0.$$ By Theorem \ref{Bound}, we have
$$V_{g,n}^\Theta\bigg(1-c(n)\frac{\sum\limits_{i=1}\limits^{n}L_i^2}{g}\bigg)\frac{\mathrm{sinh}(L_i/4)}{L_i/4}\leq V_{g,n}^\Theta(2L_1,...,2L_n)\leq V_{g,n}^\Theta\frac{\mathrm{sinh}(L_i/4)}{L_i/4}.$$
Therefore, part (1) follows by Theorem \ref{superWP}.

Part (2) is a direct consequence of Theorem \ref{Polythm} (1), Remark \ref{Coeffs} and Theorem \ref{superWP}.  \qed

\vskip 20pt
\section{Asymptotics for $n(g)$}
\setcounter{equation}{0}
In this section, we are interested in the aysmptotics when $n$ grows as $g\to\infty$. Here, we prove Theorem \ref{NsuperWP} when $n=\mathit{o}(\sqrt{g})$ as $g\to\infty$. 
First, we give a universal bound of $$4^{k}[\tau_{k}\tau_0^{n-1}]_g^\Theta-4[\tau_{k+1}\tau_0^{n-1}]_g^\Theta,$$
which helps to derive a uniform estimate as a generalization of Lemma \ref{Basic-est}.

\begin{proposition}
For arbitrary $n\geq 1$, we have
$$(n-1)[\tau_k\tau_0^{n-2}]_g^\Theta\leq[\tau_{k}\tau_0^{n-1}]_g^\Theta-4[\tau_{k+1}\tau_0^{n-1}]_g^\Theta\leq\frac{n+7}{4^{k-1}\pi}V_{g,n-1}^\Theta+\frac{8k+2}{4^{k-1}\pi}V_{g-1,n+1}^\Theta.$$ 
\end{proposition}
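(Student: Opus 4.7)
\noindent The plan is to apply Recursion~$(\mathbf{\Rmnum{3}})$ to both $[\tau_k\tau_0^{n-1}]_g^\Theta$ with $\mathbf{d}=(k,0,\dots,0)$ and to $[\tau_{k+1}\tau_0^{n-1}]_g^\Theta$ with $\mathbf{d}^+=(k+1,0,\dots,0)$, then take their difference and control the three resulting blocks $A$, $B$, $C$ in turn, in direct analogy with the computation in the proof of Lemma~\ref{e1-est}. After shifting the summation index $L\to L+1$ in every sum coming from $\mathbf{d}^+$, the difference regroups into sums weighted by $(a_L-4a_{L-1})$ with the convention $a_{-1}=0$, and each such coefficient is non-negative since $a_L/4^L-a_{L-1}/4^{L-1}\ge 0$ by Lemma~\ref{Coeff-est}(1). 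For the lower bound, only the $L=0$ contribution of the $A$-block produces an ``irreducible'' term, namely $\sum_{j=2}^n(2d_j+1)\cdot a_0\cdot[\tau_{d_1+d_j}\tau_0^{n-2}]_g^\Theta=(n-1)[\tau_k\tau_0^{n-2}]_g^\Theta$; every remaining term in the $A$-, $B$-, $C$-blocks is a non-negative $(a_L-4a_{L-1})$ multiplied by a non-negative super intersection number, so the full difference is bounded below by $(n-1)[\tau_k\tau_0^{n-2}]_g^\Theta$.

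\noindent For the upper bound, I replace every correlator appearing in the three blocks by the corresponding super Weil-Petersson volume divided by the appropriate power of $4$ via Lemma~\ref{Basic-est}(1), evaluate the resulting $a_L$-sums with Lemma~\ref{Coeff-est}(2)-(3) (which give $\sum_L(a_L/4^L-a_{L-1}/4^{L-1})=4/\pi$ and $\sum_L L(a_L/4^L-a_{L-1}/4^{L-1})=1/\pi$), and use the count $\#\{(k_1,k_2):k_1+k_2=L+k-1\}=L+k$ for the inner sums in the $B$- and $C$-blocks. A direct calculation then produces contributions $\tfrac{n-1}{4^{k-1}\pi}V_{g,n-1}^\Theta$ from the $A$-block, $\tfrac{8k+2}{4^{k-1}\pi}V_{g-1,n+1}^\Theta$ from the $B$-block, and $\tfrac{8k+2}{4^{k-1}\pi}\sum_{g_1+g_2=g,\,I\sqcup J=\{2,\dots,n\}}V_{g_1,|I|+1}^\Theta V_{g_2,|J|+1}^\Theta$ from the $C$-block.

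\noindent The main obstacle is absorbing this last $C$-block into the claimed upper bound, since its prefactor depends on $k$. I would handle this by applying Recursion~$(\mathbf{\Rmnum{1}})$ to $[\tau_0\tau_1\tau_0^{n-3}]_g^\Theta=[\tau_1\tau_0^{n-2}]_g^\Theta$, which identifies the ``interior'' portion of the product sum (the contributions with $|I|,|J|\ge 1$) with $\tfrac{1}{6}\bigl([\tau_1\tau_0^{n-2}]_g^\Theta-V_{g-1,n+1}^\Theta\bigr)$, bounded in turn by $\tfrac{1}{24}V_{g,n-1}^\Theta$ via Lemma~\ref{Basic-est}(1). The remaining ``boundary'' terms, where one of $I,J$ is empty and factors like $V_{g_1,1}^\Theta V_{g_2,n}^\Theta$ appear, are controlled through Lemma~\ref{VproductBasic}. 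Combining these two estimates turns the $C$-block into an additional $\tfrac{8}{4^{k-1}\pi}V_{g,n-1}^\Theta$ summand, which added to the $A$-block yields the claimed coefficient $\tfrac{(n-1)+8}{4^{k-1}\pi}=\tfrac{n+7}{4^{k-1}\pi}$ on $V_{g,n-1}^\Theta$, with the $B$-block producing the remaining $\tfrac{8k+2}{4^{k-1}\pi}V_{g-1,n+1}^\Theta$ summand unchanged.
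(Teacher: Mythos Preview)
Your overall strategy matches the paper's: apply $(\mathbf{\Rmnum{3}})$ to both $\mathbf{d}=(k,0,\dots,0)$ and $\mathbf{d}^+=(k+1,0,\dots,0)$, regroup into $A$-, $B$-, $C$-blocks weighted by $(a_L-4a_{L-1})$, and bound each block. Your lower bound and your estimates for the $A$- and $B$-blocks are correct and coincide with the paper's.

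There is, however, a genuine gap in your treatment of the $C$-block. After reaching
\[
C_{k,g,n}\;\le\;\frac{8k+2}{4^{k-1}\pi}\sum_{\substack{g_1+g_2=g\\ I\sqcup J=\{2,\dots,n\}}}V_{g_1,|I|+1}^\Theta\,V_{g_2,|J|+1}^\Theta,
\]
no $k$-independent bound on the product sum can convert the prefactor $\tfrac{8k+2}{4^{k-1}\pi}$ into the claimed $\tfrac{8}{4^{k-1}\pi}$; the factor of $k$ simply does not disappear. Your last paragraph asserts that it does but provides no mechanism. Moreover, both tools you invoke to control the product sum are problematic in this context. First, Recursion~$(\mathbf{\Rmnum{1}})$ applied to $[\tau_1\tau_0^{n-2}]_g^\Theta$ produces a sum over partitions of an $(n-3)$-element set with multiplicities $\binom{n-3}{i-1}$, whereas your ``interior'' sum carries the strictly larger multiplicities $\binom{n-1}{i}$; so the identification you claim does not hold, and the inequality runs the wrong way. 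Second, Lemma~\ref{VproductBasic} furnishes a constant $C(r,n,k)$ that depends on $n$, which defeats the purpose here: the entire point of this proposition (as used immediately afterwards in Lemma~6.2 and then Theorem~\ref{NsuperWP}) is to obtain bounds with \emph{universal} constants valid when $n$ grows with $g$.

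The paper handles the $C$-block differently: it bounds each $M_{k_1,k_2,g,n}$ directly by $4^{-(k_1+k_2)}V_{g,n-1}^\Theta$, invoking Lemma~\ref{Basic-est}(1) together with formula~$(\mathbf{\Rmnum{1}})$ for the single inequality
\[
\sum_{\substack{g_1+g_2=g\\ I\sqcup J=\{2,\dots,n\}}}V_{g_1,|I|+1}^\Theta\,V_{g_2,|J|+1}^\Theta\;\le\;V_{g,n-1}^\Theta
\]
with constant~$1$, and then sums over $L$. This uniform-in-$n$ bound on the product sum is the ingredient your argument is missing.
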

\begin{proof}
From \eqref{difference}, we denote $$[\tau_{k}\tau_0^{n-1}]^\Theta_g-4[\tau_{k+1}\tau_0^{n-1}]^\Theta_g=:A_{k,g,n}+B_{k,g,n}+C_{k,g,n},
$$
where $A_{k,g,n}-C_{k,g,n}$ are defined in \eqref{Ad}-\eqref{Cd} by setting $\mathbf{d}=(k,0,...,0)$. Now we calculate the contribution of $A_{k,g,n}$, $B_{k,g,n}$ and $C_{k,g,n}$ respectively.

\noindent$\bullet$ \textbf{Contribution from $A_{k,g,n}$.} On the one hand, Proposition 2.2 implies
\begin{equation}\label{Aestimatelow}
A_{k,g,n}=(n-1)\sum\limits_{L=0}\limits^{g-1-k}(a_L-4a_{L-1})[\tau_{k+L}\tau_0^{n-2}]_g^\Theta\geq (n-1)[\tau_k\tau_0^{n-2}]_g^\Theta.
\end{equation}
On the other hand, by Lemma \ref{Basic-est} (1) and Lemma \ref{Coeff-est} (2), we get
\begin{equation}\label{Aestimate1}
A_{k,g,n}\leq \frac{(n-1)}{4^k}\sum\limits_{L=0}\limits^{g-1-k}\Big(\frac{a_{L}}{4^{L}}-\frac{a_{L-1}}{4^{L-1}}\Big)V_{g,n-1}^\Theta\leq \frac{n-1}{4^{k-1}\pi}V_{g,n-1}^\Theta.
\end{equation}
\noindent$\bullet$ \textbf{Contribution from} $B_{k,g,n}$. Denote $\displaystyle T_{k,g,n}:=\sum\limits_{i+j=k}[\tau_i\tau_j\tau_0^{n-1}]_{g-1}^\Theta,$
then $$B_{k,g,n}=2\sum\limits_{L=0}\limits^{g-1-k}(a_{L}-4a_{L-1})T_{k+i-1,g,n}.$$
Lemma \ref{Basic-est} (1) implies that for any $k$, one has $$T_{k,g,n}\leq \frac{k+1}{4^{k}}\cdot V_{g-1,n+1}.$$
Therefore, by Lemma \ref{Coeff-est} (2)-(3), we have \begin{equation}\label{Bestimate1}B_{k,g,n}\leq\frac{2}{4^{k-1}}\sum\limits_{L=0}\limits^{\infty}(k+L)(a_{L}-4a_{L-1})V_{g-1,n+1}^\Theta= \frac{8k+2}{4^{k-1}\pi}V_{g-1,n+1}^\Theta,\end{equation}

\noindent$\bullet$ \textbf{Contribution from} $C_{k,g,n}$. Denote $$ M_{k_1,k_2,g,n}:=\sum_{\substack{g_1+g_2=g\\[3pt]I\sqcup J=\{2,...,n\}}}[\tau_{k_1}\tau_0^{|I|}]_{g_1}^\Theta\times [\tau_{k_2}\tau_0^{|J|}]_{g_2}^\Theta.$$
Note that by Lemma \ref{Basic-est} (1) and formula $(\mathbf{\Rmnum{1}})$, we have
\begin{align*}M_{k_1,k_2,g,n}\leq\frac{1}{4^{k_1+k_2}}\sum_{\substack{g_1+g_2=g\\[3pt]I\sqcup J=\{2,...,n\}}}V_{g_1,|I|+1}^\Theta\times V_{g_2,|J|+1}^\Theta\leq \frac{1}{4^{k_1+k_2}}V_{g,n-1}^\Theta.\end{align*}
Then by Lemma \ref{Coeff-est} (2), we get
\begin{equation}\label{Cestimate1}\frac{C_{k,g,n}}{V_{g,n-1}^\Theta}\leq\frac{2}{4^{k-1}}\sum\limits_{L=0}\limits^{\infty}\Big(\frac{a_{L}}{4^{L}}-\frac{a_{L-1}}{4^{L-1}}\Big)=\frac{8}{4^{k-1}\pi}.\end{equation}
Hence, this proposition follows by \eqref{Aestimatelow}- \eqref{Cestimate1}.

\end{proof}

\begin{lemma}There exist universal constant $c_0,c_1,c_2,c_3$ such that for $g,n\geq 0$, the following holds:
\begin{itemize}[leftmargin=2em]
\item [(1).] For any $k\geq 1$, $$c_0\cdot\frac{k^2}{2g-2+n}\leq 1-\frac{4^k[\tau_k\tau_0^{n-1}]_g^\Theta}{V_{g,n}^\Theta}\leq c_1\cdot\frac{nk^2}{2g-2+n}.$$
\item [(2).] $$\left|\frac{\frac{\pi}{2}(2g-2+n)V_{g,n}^\Theta}{V_{g,n+1}^\Theta}-1\right|\leq c_2\cdot\frac{n}{2g-2+n}.$$
\item [(3).] $$\frac{4V_{g-1,n+4}^\Theta}{V_{g,n+2}^\Theta}\leq 1-c_3\cdot\frac{n}{2g-2+n}.$$
\end{itemize}
\end{lemma}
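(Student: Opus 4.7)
I proceed in order (1), (2), (3), since parts (2) and (3) reduce cleanly to (1) via the recursions \eqref{Nre} and \eqref{Gre}, with the main technical work lying in (1). For the upper bound of (1), telescope
\[
V_{g,n}^\Theta - 4^k[\tau_k\tau_0^{n-1}]_g^\Theta = \sum_{j=0}^{k-1}4^j\bigl([\tau_j\tau_0^{n-1}]_g^\Theta - 4[\tau_{j+1}\tau_0^{n-1}]_g^\Theta\bigr),
\]
apply the upper estimate in Proposition~6.1 to each summand, and use the volume-ratio bounds $V_{g,n-1}^\Theta, V_{g-1,n+1}^\Theta \lesssim V_{g,n}^\Theta/(2g-2+n)$, the first via \eqref{N1} (shifted $n\mapsto n-1$), the second by chaining \eqref{N1} with \eqref{G1} (the small-$n$ exceptions being absorbed by the remark after \eqref{G1}). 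After summing one gets an upper bound of order $(nk+k^2)V_{g,n}^\Theta/(2g-2+n)\leq 2nk^2 V_{g,n}^\Theta/(2g-2+n)$ for $n,k\geq 1$, which is the claim.

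For the lower bound of (1), I keep only the $L=0$ contribution of $B_{j,g,n}$ in the proof of Proposition~6.1, which for $j\geq 1$ gives
\[
[\tau_j\tau_0^{n-1}]_g^\Theta - 4[\tau_{j+1}\tau_0^{n-1}]_g^\Theta \geq 2T_{j-1,g,n},\qquad T_{j-1,g,n}=\sum_{i_1+i_2=j-1}[\tau_{i_1}\tau_{i_2}\tau_0^{n-1}]_{g-1}^\Theta.
\]
Each summand of $T_{j-1,g,n}$ is bounded below by $V_{g-1,n+1}^\Theta/(2\cdot 4^{j-1})$ via Lemma~\ref{e1-est} (in the regime where $j^2$ is small relative to $g$), hence $4^jB_{j,g,n}\gtrsim jV_{g-1,n+1}^\Theta$; summing this over $j=1,\ldots,k-1$ yields the $k^2$ factor, and combining with a uniform lower bound $V_{g-1,n+1}^\Theta\gtrsim V_{g,n}^\Theta/(2g-2+n)$ (obtained by chaining \eqref{N1} with a lower bound on $V_{g-1,n+2}^\Theta/V_{g,n}^\Theta$ from \eqref{G1} and Theorem~\ref{Polythm}(2)) closes the argument. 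The base case $k=1$ is handled directly by extracting $A_{0,g,n}\geq (n-1)V_{g,n-1}^\Theta$ and the $L=1$ term $(\pi^2-8)V_{g-1,n+1}^\Theta$ of $B_{0,g,n}$, each of which is already $\gtrsim V_{g,n}^\Theta/(2g-2+n)$.

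Part (2) follows by inserting $R_L := 4^L[\tau_L\tau_0^n]_g^\Theta/V_{g,n+1}^\Theta = 1-\epsilon_L$ (with $0\leq\epsilon_L\leq c_1(n+1)L^2/(2g-1+n)$ by part~(1) applied with $n\mapsto n+1$, $k\mapsto L$) into \eqref{Nre}, noting that $\sum_{L\geq 0}(-1)^L(\pi/2)^{2L+1}/(2L+1)!=1$, the truncation at $L=g-1$ contributes an exponentially small tail, and the convergence of $\sum_L L^2(\pi/2)^{2L+1}/(2L+1)!$ (see Remark~3.2) yields an error $O(n/(2g-2+n))$. For part (3), drop the non-negative double sum in \eqref{Gre} to get $4V_{g-1,n+4}^\Theta/V_{g,n+2}^\Theta \leq 4[\tau_1\tau_0^{n+1}]_g^\Theta/V_{g,n+2}^\Theta$; Proposition~6.1 with $k=0$ and $n\mapsto n+2$ gives $V_{g,n+2}^\Theta - 4[\tau_1\tau_0^{n+1}]_g^\Theta \geq (n+1)V_{g,n+1}^\Theta$, and dividing by $V_{g,n+2}^\Theta$ together with $V_{g,n+1}^\Theta\geq 2b_0V_{g,n+2}^\Theta/(\pi(2g+n))$ from \eqref{N1} yields $1 - 4V_{g-1,n+4}^\Theta/V_{g,n+2}^\Theta \geq c_3 n/(2g-2+n)$ for $n\geq 1$ (the case $n=0$ being trivial from \eqref{G1}).

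The main obstacle is producing the $k^2$ factor in the lower bound of (1) with a constant independent of $n$: the aggregation over $j$ uses Lemma~\ref{e1-est}, whose implicit constant depends on $n$, so the argument is cleanly effective only in the range $k^2\lesssim g/c(n)$. In the complementary regime the claimed inequality becomes essentially vacuous, and choosing the universal constant $c_0$ sufficiently small keeps it consistent with the trivial bound $1-4^k[\tau_k\tau_0^{n-1}]_g^\Theta/V_{g,n}^\Theta\leq 1$; making this precise—either by establishing a uniform (in $n$) version of Lemma~\ref{e1-est} or via a careful case split at $k^2\sim g/c(n)$—is the delicate technical step.
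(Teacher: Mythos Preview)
Your argument for the upper bound in (1) and for (2) matches the paper: both telescope via \eqref{summation}, apply the upper estimate in Proposition~6.1 termwise, and convert $V_{g,n-1}^\Theta$ and $V_{g-1,n+1}^\Theta$ into $V_{g,n}^\Theta/(2g-2+n)$ through the universal two-sided bounds \eqref{N1} and \eqref{G1}; for (2) only the upper half of (1) is ever needed, and your reduction via \eqref{Nre} is exactly what the paper does.

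Where you diverge is the lower bound in (1). The paper sums the $A$-part lower bound $(n-1)[\tau_i\tau_0^{n-2}]_g^\Theta$ from Proposition~6.1 over $i<k$, whereas you sum the $B$-part $2T_{i-1,g,n}$. Your concern about producing the $k^2$ factor with a truly universal constant is well-founded: the paper's $A$-based sum naturally yields a bound of order $(n-1)k/(2g-2+n)$ rather than $k^2/(2g-2+n)$, and your $B$-based route inherits the $n$-dependent constant of Lemma~\ref{e1-est}. Since the lower half of (1) is not invoked downstream (Theorem~\ref{NsuperWP} uses only (2)), this does not affect the applications; but the stated $k^2$ lower bound with universal $c_0$ is not actually delivered by either argument as written.

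For (3) your route is cleaner than the paper's sketch. The paper bounds the double sum in \eqref{Gre} via Lemma~\ref{Vproduct} and then cites ``part~(1) for $k=1$'', which at face value only gives $1-4[\tau_1\tau_0^{n+1}]_g^\Theta/V_{g,n+2}^\Theta\geq c_0/(2g+n)$ without the required $n$ factor. Your direct use of the $A$-lower bound in Proposition~6.1 (with $k=0$ and $n\mapsto n+2$), namely $V_{g,n+2}^\Theta-4[\tau_1\tau_0^{n+1}]_g^\Theta\geq (n+1)V_{g,n+1}^\Theta$, followed by \eqref{N1}, is what actually produces the claimed bound $c_3\,n/(2g-2+n)$, and is presumably the intended argument.
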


\begin{proof}
Proposition 6.1 implies that 
$$4^k(n-1)[\tau_k\tau_0^{n-2}]_g^\Theta\leq 4^{k}\left([\tau_{k}\tau_0^{n-1}]_g^\Theta-4[\tau_{k+1}\tau_0^{n-1}]_g^\Theta\right)\leq\frac{4n+28}{\pi}V_{g,n-1}^\Theta+\frac{32k+8}{\pi}V_{g-1,n+1}^\Theta.$$
Since $$1-\frac{4^k[\tau_k\tau_0^{n-1}]_g^\Theta}{V_{g,n}^\Theta}=\frac{\sum\limits_{i=0}\limits^{k-1}4^i\left([\tau_i\tau_0^{n-1}]_g^\Theta-4[\tau_{i+1}\tau_0^{n-1}]_g^\Theta\right)}{V_{g,n}^\Theta},$$
then part (1) holds by \eqref{N11}, \eqref{G11} and Lemma \ref{Basic-est} (3).

Part (1) and \eqref{Nre} implies part (2).

By Lemma \ref{Vproduct} and \eqref{N11}, we have
$$\sum_{\substack{g_1+g_2=g\\[3pt]I\sqcup J=\{1,...,n\}}}V_{g_1,|I|+2}^\Theta\times V_{g_2,|J|+2}^\Theta=\mathit{O}\left(\frac{V_{g,n+4}}{g^4}\right)=\mathit{O}\left(\frac{V_{g,n+2}}{g^2}\right).$$
So we obtain part (3) from part (1) for $k=1$ and \eqref{Gre}.
\end{proof}

\noindent\textbf{Proof of Theorem \ref{NsuperWP}.}
From Lemma 5.2 (2) and (3), as $g\to\infty$,
$$\frac{\frac{\pi}{2}(2g-2+n)V_{g,n}^\Theta}{V_{g,n+1}^\Theta}\rightarrow 1,\qquad\frac{4V^\Theta_{g-1,n+4}}{V^\Theta_{g,n+2}}\rightarrow 1.$$
Since
$$\frac{V_{g,n(g)}^\Theta}{C_{g,n(g)}}=\frac{V_{g,n(g)}^\Theta}{\frac{\pi}{2}(2g-3+n)V_{g,n(g)-1}^\Theta}\cdot\frac{V_{g,n(g)-1}^\Theta}{\frac{\pi}{2}(2g-4+n(g))V_{g,n(g)-2}^\Theta}\cdot\cdots\frac{V_{g,2}^\Theta}{C_{g,2}},$$
where $C_{g,n(g)}$ is defined in \eqref{Cgn}. Then by Theorem \ref{superWP} and Lemma 5.2 (2), we get
\begin{align*}
&(1-c_2/g)\cdots(1-n(g)\cdot c_2/g)\cdot\left(C+\mathit{O}(1/g)\right)\\&\qquad\leq\frac{V_{g,n(g)}^\Theta}{C_{g,n(g)}}\leq(1+c_2/g)\cdots(1+n(g)\cdot c_2/g)\cdot\left(C+\mathit{O}(1/g)\right),
\end{align*}
which implies this theorem.\qed

\vskip 15pt
\appendix
\section{Some useful facts}
\setcounter{equation}{0}
We collect several facts that often used in the proofs of this paper.

\noindent\textbf{Fact 1.}\cite[Page 285]{mirzakhani2013growth} Let $\{r_i\}_{i=1}^\infty$ be a sequence of real numbers and $\{k_g\}_{g=1}^\infty$ be an increasing sequence of integers. Assume that for $g\geq 1$ and $i\in\mathbb{N}$, $0\leq c_{g,i}\leq c_i$ and $\lim\limits_{g\to\infty} c_{g,i}=c_i$. If $\sum\limits_{i=1}\limits^{\infty} |c_i r_i|<\infty$, then \begin{equation}\label{Fact1}\lim\limits_{g\to\infty}\sum\limits_{i=1}\limits^{k_g}r_ic_{g,i}=\sum\limits_{i=1}\limits^{\infty} r_ic_i.\end{equation}

\noindent\textbf{Fact 2.}\cite[Lemma 4.10]{mirzakhani2015towards} Let $\{c_j\}_{j=1}^\infty$ be a positive sequence with $a_2,...,a_l\in\mathbb{R}$ expressed as the following
$$c_j=1+\frac{a_2}{j^2}+\cdots+\frac{a_s}{j^s}+\mathit{O}\left(\frac{1}{j^{s+1}}\right).$$
When $g\to\infty$, there exist $b_1,...,b_{s-1}$ such that
$$\prod\limits_{j=1}\limits^{g}c_j=C_0\left(1+\frac{b_1}{g}+\cdots+\frac{b_{s-1}}{g^{s-1}}+\mathit{O}\Big(\frac{1}{g^{s}}\Big)\right),$$
where $C_0=\prod_{j=1}^{\infty}c_j$. Moreover, $b_1,...,b_{s-1}$ are polynomials in $a_2,...,a_s$ with rational coefficients.

\noindent\textbf{Fact 3.}\cite[Remark 4.4]{mirzakhani2015towards} Let $\{\omega_g\}_{g=1}^{\infty}$ be a sequence of the form $$\omega_g=1+\frac{u_1}{g}+\cdots+\frac{u_{s-1}}{g^{s-1}}+\mathit{O}\left(\frac{1}{g^s}\right),$$ then $$\frac{1}{\omega_g}=1+\frac{v_1}{g}+\cdots+\frac{v_{s-1}}{g^{s-1}}+\mathit{O}\left(\frac{1}{g^s}\right)$$ where each $v_i$ is a polynomial in $u_1,...,u_i$ with integer coefficients. Moreover, if $u_i$ is a polynomial of degree $m_i$ in $n$, then $v_k$ is a polynomial of degree at most $\text{max}_{i+j=k}(m_i+m_j)$ in $n$.

\noindent\textbf{Fact 4.}
(1) Since $\psi_i$, $\kappa_1$ and $\Theta_{g,n}\in H^2(\overline{M}_{g,n},\mathbb{Q})$ (cf. \cite{arbarello1996combinatorial,norbury2023new,wolpert1983homology}), then
$$[\tau_{d_1}\cdots\tau_{d_n}]_g^\Theta\in\mathbb{Q}\cdot\pi^{2g-2-2|\mathbf{d}|},$$
where $|\mathbf{d}|=d_1+\cdots+d_n$. In particular, $V_{g,n}^\Theta=[\tau_0^n]_g^\Theta$ is a rational multiple of $\pi^{2g-2}$.

\noindent(2) For a polynomial $p(x)=\sum_{j=1}^mb_j x^j$ of degree $m$, the polynomial
\begin{align*}\widetilde{p}(x)&=\sum\limits_{i=0}\limits^{\infty}\left(\frac{a_{i}}{4^{i}}-\frac{a_{i-1}}{4^{i-1}}\right)p(x+i)\\&=\sum\limits_{i=0}\limits^{\infty}\left(\frac{a_{i}}{4^{i}}-\frac{a_{i-1}}{4^{i-1}}\right)\sum_{j=1}^mb_j (x+i)^j\end{align*}
is again a polynomial of degree $m$. Moreover, the coefficient of $\widetilde{p}(x)$ at $x^j$ is $$[x^j]\widetilde{p}(x)=\sum\limits_{j+r\leq m}\binom{j+r}{j}b_{j+r}A(r),$$ where $$A(r)=\sum\limits_{i=0}\limits^{\infty}i^r\left(\frac{a_{i}}{4^{i}}-\frac{a_{i-1}}{4^{i-1}}\right).$$ From Lemma \ref{Coeff-est} (4), we know that $\pi A(r)$ is a polynomial in $\pi^2$ of degree $\lfloor r/2\rfloor$.

\noindent(3)\cite[Remark 4.5 (3)]{mirzakhani2015towards} Faulhaber's formula implies that the function $S_m(n)=\sum_{i=1}^ni^m$ is a polynomial in $n$ of degree $m+1$ with rational coefficients. As a result, if $\widetilde{P}(x)$ is a polynomial of degree $m$ with coefficients in a field $\mathbb{F}$, then $P(n)=\widetilde{P}(1)+\cdots+\widetilde{P}(n)$ is a polynomial of degree $m+1$ with coefficients in $\mathbb{F}$. Similarly, if $\widetilde{Q}(x,y)\in\mathbb{F}[x,y]$ is a polynomial of degree $m$, the function
$$Q(n)=\sum\limits_{i+j=n}\widetilde{Q}(i,j)$$
is again a polynomial in $\mathbb{F}[n]$.

\noindent\textbf{Fact 5.}\cite[Page 1285]{mirzakhani2015towards} Let $F(g,n)=\prod\limits_{i=1}\limits^{n}f_i(g),$ where $\{f_i\}_{i=1}^\infty$ is a sequence of functions with expansion $$f_{i}(g)=1+\frac{p(1,i)}{g}+\cdots+\frac{p(s,i)}{g^s}+\mathit{O}\left(\frac{1}{g^{s+1}}\right).$$ 
Then $$F(g,n)=1+\frac{\tilde{p}_1(n)}{g}+\cdots+\frac{\tilde{p}_s(n)}{g^s}+\mathit{O}\left(\frac{1}{g^{s+1}}\right).$$
If $p(j,k)$ is a polynomial in $k$ of degree $j$ for any given $j$, then $\tilde{p}_j(n)$ is a polynomial in $k$ of degree $2j$ for a given $j$. Moreover, $$[n^{2j}]\tilde{p}_j(n)=\frac{l^j}{2^j j!},$$ where $l=[j]p(1,j)$.

$$ \ \ \ \ $$

\end{CJK}
\end{document}